\documentclass[12pt,reqno]{amsart}

\newcommand\version{June 16, 2026}
\usepackage[utf8]{inputenc}
\usepackage[british,UKenglish]{babel}

\usepackage{amsmath,amsfonts,amsthm,amssymb,amsxtra,mathtools}
\usepackage{hyperref}
\usepackage{color}
\usepackage{bbm} 
\usepackage{bm} 
\usepackage[normalem]{ulem}
\usepackage[font=small,labelfont=bf]{caption}
\usepackage[autostyle=false, style=english]{csquotes}
\MakeOuterQuote{"}

\mathtoolsset{showonlyrefs} 

\newtheorem{theorem}{Theorem}[section]
\newtheorem{proposition}[theorem]{Proposition}
\newtheorem{lemma}[theorem]{Lemma}
\newtheorem{corollary}[theorem]{Corollary}

\theoremstyle{definition}

\newtheorem{definition}[theorem]{Definition}

\theoremstyle{remark}

\newtheorem{remark}[theorem]{Remark}

\numberwithin{equation}{section}

\newcommand{\C}{\mathbb{C}}

\renewcommand{\epsilon}{\varepsilon}

\newcommand{\loc}{{\rm loc}}

\newcommand{\N}{\mathbb{N}}

\renewcommand{\phi}{\varphi}
\newcommand{\R}{\mathbb{R}}

\newcommand{\Z}{\mathbb{Z}}

\newcommand{\ceil}[1]{\left \lceil{#1}\right \rceil }

\DeclareMathOperator{\diam}{diam}
\DeclareMathOperator{\dist}{dist}

\DeclareMathOperator{\im}{Im}

\DeclareMathOperator{\re}{Re}
\DeclareMathOperator{\spec}{spec}
\DeclareMathOperator{\supp}{supp}

\def\rd{\mathrm{d}}

\newcommand{\one}{\mathbf{1}}

\newcommand{\e}{{\rm e}}
\newcommand\I{\mathrm{i}}

\newcommand{\Ran}{\mathop{\rm Ran}}

\begin{document}

\title[Random Schr\"odinger operators on manifolds --- \version]{Random Schr\"odinger operators on manifolds and abstract bounds for multiplier-type operators}

\author[J.-C. Cuenin]{Jean-Claude Cuenin}
\address[Jean-Claude Cuenin]{Department of Mathematical Sciences, Loughborough University, Loughborough,
 Leicestershire, LE11 3TU United Kingdom}
\email{J.Cuenin@lboro.ac.uk}

\author[K. Merz]{Konstantin Merz}
\address[Konstantin Merz]{Institute for Theoretical Physics, ETH Zurich, Wolfgang-Pauli-Strasse 27, 8093 Zurich, Switzerland, and Institut f\"ur Analysis und Algebra, Technische Universit\"at Braunschweig, Universit\"atsplatz 2, 38106 Braun\-schweig, Germany}
\email{k.merz@tu-bs.de, k.merz@phys.ethz.ch}

\author[E. Stefanescu]{Eduard Stefanescu}
\address[Eduard Stefanescu]{Institut für Analysis und Zahlentheorie, TU Graz, Steyrergasse 30, 8010 Graz, Austria}
\email{eduard.stefanescu@tugraz.at}

\thanks{\copyright\, 2026 by the authors. This paper may be reproduced, in its entirety, for non-commercial purposes. J.-C. C. acknowledges support through the Engineering \& Physical Sciences Research Council (EP/X011488/1). E.~S.~acknowledges support through the Austrian Science Fund (FWF) [Grant-DOI 10.55776/P35322, 10.55776/PAT5120424, and 10.55776/PAT4632823].}

\subjclass[2020]{Primary: 35P20; Secondary: 35J10, 47B80, 58J50}
\keywords{Random Schrödinger operators, spectral inclusion bounds, resolvent estimates, spectral clusters, square root cancellation}

\date{\version}

\begin{abstract}
We study random Schrödinger operators on closed Riemannian manifolds with Anderson-type potentials. We prove high-probability spectral inclusion bounds showing that eigenvalues remain close to those of the Laplacian, with deviations controlled by a norm of the potential coefficients. Compared with deterministic bounds, this yields a square-root cancellation gain. The proof is based on a general principle showing that randomisation improves operator norm bounds for multiplier-type operators, which we formulate in both discrete and continuous settings.
\end{abstract}

\maketitle

\section{Introduction and main results}
\subsection{Random Schrödinger operators}
On a closed (i.e., compact, boundaryless) Riemannian manifold $(M,g)$ of dimension $d \geq 2$, we consider the random Schr\"odinger operator
\begin{align}
  H_{\omega,\lambda} = -\Delta_g + V_{\omega,\lambda}
  \quad \text{on } L^2(M),
\end{align}
where $V_{\omega,\lambda}$ is an Anderson-type potential built from bumps at scale $\lambda^{-1}$.
More precisely, we assume that
\begin{align}\label{eq:Anderson-type}
  V_{\omega,\lambda}(x)
  = \sum_{j=1}^{N(\lambda)} \omega_j v_j(\lambda) \psi_j(x;\lambda),
  \quad x \in M,
\end{align}
where the functions $\psi_j(\cdot;\lambda)$ are uniformly bounded and have bounded overlap, with spatial support at scale \(\lambda^{-1}\), $v_j(\lambda) \in \mathbb{C}$ are deterministic coefficients, the random variables $\omega_j$ are i.i.d.~symmetric Bernoulli (Rademacher) or centred normalised Gaussians, $\lambda\geq 2$ is a large parameter (the inverse randomisation length), and $N(\lambda)\leq C\lambda^d$. 

Since $M$ is compact, the spectrum of $-\Delta_g$ is discrete 
\[
\spec(-\Delta_g)=\{\lambda_k^2\}_{k=0}^\infty,
\qquad
0=\lambda_0^2\leq \lambda_1^2\leq \cdots,
\qquad
\lambda_k^2\to \infty.
\]
Our first main result gives a high-probability bound on the location of the eigenvalues of $H_{\omega,\lambda}$, up to energy $\lambda^2$, 
in terms of the \(\ell^{2q}\)-norm of \(v(\lambda)=(v_j(\lambda))_{j=1}^{N(\lambda)}\).

\begin{theorem}\label{thm:main_intro}
Fix \(N\geq 1\). Then there exists a constant \(C_N\) such that for every $d/2\leq q\leq\infty$, every $\lambda\geq 2$, and every \(K\geq 1\), with probability at least \(1-\exp(-K^2)\), 
we have
\[
\{z\in\spec(H_{\omega,\lambda}) : |z|\leq \lambda^2,\,\dist(z, \spec(-\Delta_g))\geq \lambda^{-N}\}
\subset
\bigcup_{\lambda_k\leq \lambda} D(\lambda_k^2,C_NKr_k)\cup \Omega,
\]
where 
\begin{align}\label{def.r_k}
r_k
:=
(1+\lambda_k)^{\frac{d(q+1)}{2q}-\mu(q)}  \, \lambda^{-\frac{d(q+1)}{2q}} (\log\lambda)^{11/2}\|v(\lambda)\|_{\ell^{2q}},
\end{align}
\begin{align}
\Omega:=\{z\in\C: |z|^{1/2}(1+|z|)^{-\frac{d(q+1)}{4q}+\frac{\mu(q)}{2}}  \leq C_NK\, \lambda^{-\frac{d(q+1)}{2q}} (\log \lambda)^{11/2} \|v(\lambda)\|_{\ell^{2q}}\},
\end{align}
and  
\begin{align}\label{def.mu(q)}
      \mu(q):=\begin{cases}
1, & 1\leq q\leq \frac{d+1}{2},\\
\frac{d+1}{4q}+\frac{1}{2}, & \frac{d+1}{2}\leq q\leq\infty.
\end{cases}
  \end{align}
\end{theorem}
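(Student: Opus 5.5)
We will use the Birman--Schwinger principle combined with a random multiplier bound. Factor $V_{\omega,\lambda}=|V|^{1/2}\,\mathrm{sgn}(V)|V|^{1/2}$. Alternatively, use the Neumann series directly: for $z\notin\spec(-\Delta_g)$, write
\[
H_{\omega,\lambda}-z=(-\Delta_g-z)\bigl(I+R_0(z)V_{\omega,\lambda}\bigr),\qquad R_0(z)=(-\Delta_g-z)^{-1}.
\]
Then $z\in\spec(H_{\omega,\lambda})$ forces $\|V_{\omega,\lambda}R_0(z)\|_{L^2\to L^2}\geq 1$, or the corresponding condition on a suitable sandwiched operator. Thus the theorem reduces to a high-probability upper bound for $\|V_{\omega,\lambda}R_0(z)\|$, uniform in $z$ with $|z|\leq\lambda^2$ and $\dist(z,\spec(-\Delta_g))\geq\lambda^{-N}$. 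If $z$ violates both the disc condition and $\Omega$, this norm is $<1$.

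\textbf{Spectral decomposition.} We decompose $R_0(z)$ spectrally into unit-width clusters. Let $\chi_m$ be the spectral projection of $\sqrt{-\Delta_g}$ onto $[m,m+1)$, so that
\[
R_0(z)=\sum_m \sum_{\lambda_k\in[m,m+1)} (\lambda_k^2-z)^{-1}P_k .
\]
For each cluster we need a random bound on $\|V_{\omega,\lambda}\chi_m\|_{L^2\to L^2}$. This is where the abstract multiplier principle earlier in the paper enters, since $V_\omega\chi_m$ is a multiplier-type operator $\sum_j\omega_j v_j\psi_j\chi_m$. That principle yields, with probability $1-e^{-K^2}$,
\[
\Bigl\|\sum_j \omega_j v_j\psi_j T\Bigr\|\lesssim K(\log\lambda)^{a}\,\bigl\|(v_j\|\psi_j T\|)_j\bigr\|_{\ell^2}\ \text{or a mixed-norm version}.
\]
We use Hölder in $j$ to pass to $\|v\|_{\ell^{2q}}$ times $\|(\|\psi_j\chi_m\|)\|_{\ell^{2q'}}$-type quantities. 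These local quantities are bounded via Sogge's spectral cluster estimates $\|\chi_m\|_{L^2\to L^p}\lesssim m^{\sigma(p)}$ with $p=2q'$-dual exponents; equivalently $\|\one_{B(x_j,\lambda^{-1})}\chi_m\|_{L^2\to L^2}\lesssim (m/\lambda)^{\dots}$. The exponent $\mu(q)$ is exactly Sogge's exponent in disguise, with a break at $q=(d+1)/2$, corresponding to $p=2(d+1)/(d-1)$. Together with the volume factor $\lambda^{-d/(2q)}$-type gains from bump localization, this should produce the factor $(1+m)^{\frac{d(q+1)}{2q}-\mu(q)}\lambda^{-\frac{d(q+1)}{2q}}\|v\|_{\ell^{2q}}$, up to powers of $\log\lambda$ and $K$.

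\textbf{Uniformity and summation.} The randomness must be handled uniformly. We take a union bound over the $O(\lambda)$ clusters $m\leq\lambda$, and treat the high-frequency tail $m>\lambda$ deterministically, where $|\lambda_k^2-z|\gtrsim m^2$ gives decay. The union bound costs only $\sqrt{\log\lambda}$ in $K$. Then
\[
\|V_\omega R_0(z)\|\leq \sum_m \frac{\|V_\omega\chi_m\|}{\dist(z,\{\lambda_k^2:\lambda_k\in[m,m+1)\})}.
\]
For the cluster nearest to $z$, say containing $\lambda_k$, the term is $<1/2$ unless $|z-\lambda_k^2|\lesssim K r_k$, which gives the discs. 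The remaining clusters contribute $\sum_m \|V\chi_m\|/|m^2-|z||$, a harmonic-type sum bounded by $(\log\lambda)\cdot\sup$. Small $|z|$, where this sum is dominated by $|z|^{-1/2}$-type behaviour of the low modes, gives $\Omega$. The extra logs accumulate to $(\log\lambda)^{11/2}$.

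Within the nearest cluster we must actually treat individual eigenvalues, since the condition is only $\lambda^{-N}$ separation. Here I would bound $\|V\chi_m R_0(z)\chi_m\|$ by splitting the cluster into eigenvalues within $\lambda^{-N}$. This costs only $\log$ losses via dyadic decomposition of $|\lambda_k^2-z|$, using the same random bound for sub-projections, which is uniform over projections by the abstract principle.

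\textbf{Main obstacle.} The main obstacle is obtaining the random cluster bound $\|V_\omega\chi_m\|$ with the sharp exponent uniformly over all $m$ and all relevant sub-projections simultaneously. That is, we must apply the multiplier principle to a family of operators with only polylogarithmic loss; I would try a chaining or net argument over the finitely many spectral windows of width $\lambda^{-N}$.
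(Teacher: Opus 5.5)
There is a genuine gap at your first reduction. Apart from a passing mention of a sandwiched operator, your argument is built on the one-sided quantities $\|V_{\omega,\lambda}R_0(z)\|$ and $\|V_{\omega,\lambda}\chi_m\|$, and these cannot see the random signs. For any operator $A$ one has $\|V_{\omega,\lambda}A\|^2=\|A^*|V_{\omega,\lambda}|^2A\|$, which depends only on $|V_{\omega,\lambda}|$. In the Rademacher case with disjointly supported bumps, $|V_{\omega,\lambda}|=|\sum_j v_j(\lambda)\psi_j(\cdot;\lambda)|$ pointwise, so these norms are not random at all. Correspondingly, the bound you attribute to the multiplier principle, $\|\sum_j\omega_jv_j\psi_jT\|\lesssim(\sum_j|v_j|^2\|\psi_jT\|^2)^{1/2}$, holds deterministically (up to the overlap constant). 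It is not what Theorems~\ref{thm:abstract1_intro} and~\ref{thm:abstract2} provide.

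This makes the approach fail outright, not just lose constants. Testing on a normalised eigenfunction gives $\|V_{\omega,\lambda}R_0(z)\|\geq\|V_{\omega,\lambda}e_k\|_{L^2}/|\lambda_k^2-z|$. Take a flat torus, $|v_j(\lambda)|\equiv1$, and disjointly supported bumps equal to $1$ on the balls $B(x_j,\lambda^{-1}/8)$ around a maximal $\lambda^{-1}$-separated set. Then $\|V_{\omega,\lambda}e_k\|_{L^2}\gtrsim1$ for every Rademacher $\omega$. So your criterion $\|V_{\omega,\lambda}R_0(z)\|<1$ can only exclude points with $|z-\lambda_k^2|\gtrsim\|v(\lambda)\|_{\ell^\infty}$, which is the trivial $L^\infty$ inclusion. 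For $q=\infty$, by contrast, the theorem asserts radii $r_k\approx(1+\lambda_k)^{\frac{d-1}{2}}\lambda^{-\frac d2}\|v(\lambda)\|_{\ell^\infty}\lesssim\lambda^{-1/2}\|v(\lambda)\|_{\ell^\infty}$ up to logarithms.

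The missing idea is the symmetric factorisation, in which $V_{\omega,\lambda}$ enters \emph{linearly} between two deterministic operators: $\mathcal K_{\omega,\lambda}(z)=|-\Delta_g-z|^{-1/2}V_{\omega,\lambda}(-\Delta_g-z)^{-1/2}$. Then $\langle T_1g_1,V_{\omega,\lambda}T_2g_2\rangle$ is a sub-Gaussian sum $\sum_j\omega_j(\cdots)$, and Theorem~\ref{thm:abstract2} applies with $T_1=T_2$ the half-resolvent cut off to frequencies $\lesssim\lambda$; the cutoff is needed for local constancy at scale $\lambda^{-1}$. Its $L^2\to L^{p'}$ and $L^2\to L^\infty$ bounds come from Proposition~\ref{prop:L2_to_Lp'_resolvent_estimate}, which rests on orthogonality across unit clusters rather than an $\ell^1$ sum over $m$. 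These bounds produce $\mu(q)$ through $\nu(p')+\nu(\infty)=d/p-\mu(q)$. The frequencies above $\lambda$ are treated with the same random estimate at finer scales $2^{-k}$ (Theorem~\ref{thm:abstract2}(2)).

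Your uniformity step also needs repair. The abstract principle bounds the expectation for one fixed pair $(T_1,T_2)$; it gives no uniformity over families of sub-projections. In the paper, uniformity in $z$ comes from a union bound over $c2^j$-nets of the sets where $2^j\leq B(z)<2^{j+1}$, combined with the Lipschitz estimate of Lemma~\ref{lem:BS-perturb-z}. The hypothesis $d(z)\geq\lambda^{-N}$ is exactly what keeps these nets of polynomial size in $\lambda$, so the union bound costs only powers of $\log\lambda$.
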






We postpone further remarks concerning Theorem~\ref{thm:main_intro} to the end of the introduction, including a comparison with the deterministic results of \cite{Cuenin2026}.


\subsection{Multiplier-type operators}
The main new ingredient in the proof of Theorem~\ref{thm:main_intro} is an abstract operator-theoretic principle showing that randomisation improves norm bounds for multiplier-type operators of the form \(T_1^* V T_2\), where \(V\) is a multiplication operator and \(T_1,T_2\) are structured linear operators, for instance spectral projectors, resolvents, or Fourier extension operators.

Random multiplier estimates play a crucial role in the work of Schlag--Shubin--Wolff \cite{Schlagetal2002} and Bourgain \cite{Bourgain2002,Bourgain2003} on weakly disordered random Schr\"odinger operators on \(\mathbb{Z}^2\). In Schlag--Shubin--Wolff, the relevant bounds are obtained by exploiting geometric considerations involving curvature. Bourgain introduced a different approach, based on a general entropy bound, which works in arbitrary dimension and does not rely on curvature. On the other hand, Bourgain's method makes essential use of the Rademacher or Gaussian distribution of the random variables \(\omega_j\).

Our contribution is a general abstract framework inspired by Bourgain's approach. An instance of such a principle for operators on \(\mathbb{R}^d\) was established in \cite{CueninMerz2025}. In the present paper, we develop this idea in a substantially broader form.

We formulate the abstract results in two settings. The first is a discrete model, where a function on a finite set is randomised over the elements of a partition. The second is a continuous analogue on Ahlfors regular metric measure spaces, where the randomisation is performed on Voronoi cells associated with a finite set. These results are of independent interest and form the abstract core of the paper.

\begin{theorem}[Discrete abstract estimate]\label{thm:abstract1_intro}
For $i=1,2$, let $\mathcal{H}_i$ be complex separable Hilbert spaces, $E$ a nonempty finite set, and consider bounded linear operators
\[
S_i:\mathcal H_i\to \ell^\infty(E),
\]
which we also view as operators into $\ell^{p'}(E)$ since $E$ is finite.
Let $\{E_j\}_{j\in J}$ be a partition of $E$, that is,
\[
E=\bigcup_{j\in J}E_j,
\qquad
E_j\cap E_k=\emptyset \quad \text{for } j\neq k.
\]
Let $v\in \ell^\infty(E)$, set $v_j:=v\mathbf{1}_{E_j}$, and let $\{\omega_j\}_{j\in J}$ be i.i.d.~symmetric Bernoulli or centred normalised Gaussian random variables. Define
\begin{align}\label{def:v_omega discrte}
v_{\omega}(x)=\omega_j v(x),
\qquad x\in E_j.
\end{align}
Let $2\leq q\leq\infty$ and $1\leq p\leq 2\leq p'\leq \infty$ satisfy $\frac{1}{q}=\frac{1}{p}-\frac{1}{p'}$. Then
\begin{align}\label{eq:abstract1}
\!\!
\mathbf{E}\|S_1^*v_{\omega}S_2\|_{\mathcal{H}_2\to \mathcal{H}_1}
\lesssim
(\log \#E)^{\frac{5}{2}}
\|S_1\|_{\mathcal{H}_1\to \ell^{p'}(E)}
\|S_2\|_{\mathcal{H}_2\to \ell^\infty(E)}
\Big(\sum_{j\in J}\|v_j\|_{\ell^p(E_j)}^{2q}\Big)^{\frac{1}{2q}},
\end{align}
with the obvious modification for $q=\infty$.
\end{theorem}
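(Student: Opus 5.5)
The plan is to follow Bourgain's entropy method. We view the operator norm as the supremum of a Gaussian (or Rademacher) process and bound its expectation by Dudley's entropy integral. The covering numbers are controlled by Maurey's empirical method, or equivalently dual Sudakov, in $\ell^{p'}(E)$ and $\ell^\infty(E)$; this is where the powers of $\log\#E$ come from.

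\textbf{Step 1: reduction to a process.} By homogeneity we normalise $\|S_1\|_{\mathcal H_1\to\ell^{p'}}=\|S_2\|_{\mathcal H_2\to\ell^\infty}=1$. Writing the norm as a bilinear supremum gives
\[
\|S_1^*v_\omega S_2\|=\sup_{a\in K_1,\,b\in K_2}\Big|\sum_{j\in J}\omega_j\langle v_j b,a\rangle\Big|,
\]
where $K_1=\{S_1f:\|f\|\le1\}$ and $K_2=\{S_2g:\|g\|\le1\}$. Both are bounded convex sets in $\C^E$ of dimension at most $\#E$, with $K_1\subset B_{\ell^{p'}(E)}$ and $K_2\subset B_{\ell^\infty(E)}$. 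The process $X_{a,b}=\sum_j\omega_j\langle v_jb,a\rangle$ is subgaussian with respect to the canonical metric
\[
d\big((a,b),(a',b')\big)=\Big(\sum_j|\langle v_jb,a\rangle-\langle v_jb',a'\rangle|^2\Big)^{1/2}.
\]
For Rademacher variables this follows from Hoeffding; for Gaussians it is exact.

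\textbf{Step 2: metric comparison.} By the triangle inequality, $d$ is bounded by the sum of $\Big(\sum_j|\langle v_j(b-b'),a\rangle|^2\Big)^{1/2}$ and the analogous term in $a-a'$. Hölder on each $E_j$ gives
\[
|\langle v_jb,a\rangle|\le\|v_j\|_{\ell^p(E_j)}\,\|a\|_{\ell^{p'}(E_j)}\,\|b\|_{\ell^\infty(E_j)} .
\]
Then Hölder in $j$ with exponents $q$ and $q/(q-1)$, together with $\ell^{p'}\subset\ell^{2q/(q-1)}$ in $j$ (which uses $2q/(q-1)\ge p'$, a consequence of $1/q=1/p-1/p'$ and $p\le 2$), yields
\[
d\big((a,b),(a',b')\big)\lesssim \|v\|_{*}\big(\|a-a'\|_{\ell^{p'}(E)}+\|b-b'\|_{\ell^\infty(E)}\big),\qquad \|v\|_*:=\Big(\sum_j\|v_j\|_{\ell^p}^{2q}\Big)^{1/(2q)} .
\]
A plain Lipschitz bound of this kind is too lossy for the entropy integral at small scales, so I would keep the mixed form. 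The small factor $\|b-b'\|_{\ell^\infty}$ is paired with the $\ell^{p'}$-size of $a$, and symmetrically. Consequently it suffices to cover $K_1$ in $\ell^{p'}$ and $K_2$ in $\ell^\infty$ at each scale $2^{-k}$.

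\textbf{Step 3: entropy bounds and Dudley.} The key input is a Maurey/dual-Sudakov-type estimate:
\[
\log N\big(K_2,\ell^\infty(E),\epsilon\big)\lesssim \epsilon^{-2}\log\#E\cdot(\text{log factor}),
\]
and similarly for $K_1$ in $\ell^{p'}$. This holds because $K_i$ are images of Hilbert balls under maps into these spaces: the dual map goes from $\ell^1$ or $\ell^p$ (of type/cotype controlled by $\log\#E$) into a Hilbert space. This is the step I expect to be the main obstacle. A pure $\epsilon^{-2}$ entropy makes Dudley's integral $\int\epsilon^{-1}\,d\epsilon$ diverge logarithmically. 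I would handle this by cutting the integral at scale $\epsilon\sim(\#E)^{-C}$ and bounding the finest scales by the trivial volumetric estimate $\log N\lesssim\#E\log(1/\epsilon)$, combined with a crude deterministic bound. Between that cutoff and scale $1$ there are $O(\log\#E)$ dyadic scales, each contributing $\sqrt{\log\#E}$ times log factors from the Maurey lemma in $\ell^{p'}$ and $\ell^\infty$. Tracking these should give $(\log\#E)^{1/2}\cdot\log\#E\cdot\log\#E=(\log\#E)^{5/2}$. Multiplying by $\|v\|_*$ from Step 2 and undoing the normalisation then yields \eqref{eq:abstract1}. The case $q=\infty$ follows identically, with the sup-norm replacing the $\ell^{2q}$ sum in $j$.
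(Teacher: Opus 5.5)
\textbf{The gap is in Step~3, and it cannot be closed because the inequality you are aiming at is false.}

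Step~3 claims an entropy bound for $K_1=S_1(B_{\mathcal H_1})$ in $\ell^{p'}(E)$ with only logarithmic dependence on $\#E$, and that bound fails. In $\ell^\infty$ the dual Sudakov inequality does give $\log N(K_2,\epsilon B_{\ell^\infty})\lesssim\epsilon^{-2}\|S_2\|_{\mathcal H_2\to\ell^\infty}^2\log\#E$, with no extra logarithm. The reason is that $\mathbf{E}\|S_2G\|_{\ell^\infty}$ (with $G$ standard Gaussian) is a maximum of $\#E$ Gaussians of variance at most $\|S_2\|_{\mathcal H_2\to\ell^\infty}^2$. This is exactly what Proposition~\ref{prop. abstract chaining} encodes.

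For $p'<\infty$ there is no such bound in terms of $\|S_1\|_{\mathcal H_1\to\ell^{p'}}$. Take $\mathcal H_1=\ell^2(E)$ and $S_1=\mathrm{id}$, so $\|S_1\|_{\mathcal H_1\to\ell^{p'}}=1$ and $K_1$ is the Euclidean unit ball. Then $\mathbf{E}\|S_1G\|_{\ell^{p'}}\approx(\#E)^{1/p'}$. For $q=\infty$ (so $p'=2$), a volume count gives $\log N(K_1,\tfrac12 B_{\ell^2})\geq 2\log 2\cdot\#E$. No cutoff of the Dudley integral repairs this.

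The real problem is upstream. Your pairing in Step~2 measures $a-a'$ in $\ell^{p'}$ against $\|b'\|_{\ell^\infty}$, which is designed to produce the asymmetric factor $\|S_1\|_{\mathcal H_1\to\ell^{p'}}\|S_2\|_{\mathcal H_2\to\ell^\infty}$. With that factor the inequality fails. Keep $S_1=\mathrm{id}$, let $\mathcal H_2=\C$ and $S_2c=c\mathbf{1}_E$, and take singleton cells $E_j$, $v\equiv1$ and Rademacher $\omega$. Then $S_1^*v_\omega S_2c=c\,\omega$, so the left-hand side of \eqref{eq:abstract1} equals $(\#E)^{1/2}$. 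The right-hand side is $\lesssim(\log\#E)^{5/2}(\#E)^{1/(2q)}\leq(\log\#E)^{5/2}(\#E)^{1/4}$.

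\textbf{Comparison with the paper, and how to repair your route.} The paper never covers in $\ell^{p'}$. It applies Proposition~\ref{prop. abstract chaining} to both $S_1$ and $S_2$ and bounds the $\ell^{p'}$-norm of each chaining piece only trivially, by $\|S_1\|_{\mathcal H_1\to\ell^{p'}}$. It then applies \eqref{eq:dudley} to each pair of levels $(k,\ell)$ and sums $\min(A,2^{-k-\ell}B)$, which costs $(\log\#E)^2$ on top of $\sqrt{\log\#E}$.

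Its step \emph{interchanging the roles of $\xi_1,\xi_2$} for $k>\ell$ can only be done with the H\"older split $\|\xi_1\|_{\ell^\infty}\|\xi_2\|_{\ell^{p'}}$, since the pieces of $S_1g_1$ are small only in $\ell^\infty$. That brings in $\|S_1\|_{\mathcal H_1\to\ell^\infty}\|S_2\|_{\mathcal H_2\to\ell^{p'}}$. What the argument and the example above actually support is the bound with $\max\{\|S_1\|_{\mathcal H_1\to\ell^{p'}}\|S_2\|_{\mathcal H_2\to\ell^\infty},\|S_1\|_{\mathcal H_1\to\ell^\infty}\|S_2\|_{\mathcal H_2\to\ell^{p'}}\}$.

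Your route proves this corrected statement if, in Step~2, you bound the second increment by $\|a-a'\|_{\ell^\infty}\|b'\|_{\ell^{p'}}$. That is, use $d\leq\|v\|_*(\|S_1\|_{\mathcal H_1\to\ell^{p'}}\|b-b'\|_{\ell^\infty}+\|S_2\|_{\mathcal H_2\to\ell^{p'}}\|a-a'\|_{\ell^\infty})$, so only $\ell^\infty$ covering numbers enter. Cut Dudley's integral at $\epsilon$ equal to $(\#E)^{-1}$ times the $d$-diameter, and use the volumetric bound below the cut. This even gives $(\log\#E)^{3/2}$: a single entropy integral over $K_1\times K_2$ loses one logarithm, where the paper's double sum over $(k,\ell)$ loses two.

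Finally, a smaller point: $2q/(q-1)\geq p'$ does not follow from $\frac1q=\frac1p-\frac1{p'}$ and $p\leq2$ (take $p=2$, $p'=\infty$). It is equivalent to $\frac1p+\frac1{p'}\geq1$, which is the conjugacy the paper tacitly assumes.
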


Here and below, we use the notation $A \lesssim B$ for functions $A,B\geq0$ to indicate that there is a constant $c$ such that $A\leq c B$. If $c$ depends on a parameter $\tau$, we may write $A\lesssim_\tau B$, but the dependence on $d,M$ is usually ignored in the notation.

\begin{remark}\label{rem:Holder}
By H\"older's inequality,
\[
\Big(\sum_{j\in J}\|v_j\|_{\ell^p(E_j)}^{2q}\Big)^{1/2q}
\leq
\sup_{j\in J}(\# E_j)^{\frac{1}{p}-\frac{1}{2q}}\|v\|_{\ell^{2q}(E)}.
\]
In particular, if all the $E_j$ are singletons, then the right-hand side is just $\|v(\lambda)\|_{\ell^{2q}(E)}$.
\end{remark}

We now state a continuous version of Theorem \ref{thm:abstract1_intro}. All technical notions appearing in the statement are defined in Section~\ref{sec:abstract}. A precise and stronger statement is given in Theorem \ref{thm:abstract2}.

\begin{theorem}[Continuous abstract estimate]\label{thm:abstract2_intro}
Let $X$ be an Ahlfors $d$-regular metric measure space, and let
\[
T_i:\mathcal{H}_i\to L^\infty(X)\cap L^{p'}(X), \qquad i=1,2,
\]
be bounded linear operators, where $\mathcal{H}_i$ are complex separable Hilbert spaces. Let $\Lambda$ be a finite, maximal $r$-separated subset of $X$, for some $r>0$, and let $V_\omega$ be the randomisation, by i.i.d.~symmetric  Bernoulli or centred normalised Gaussian random variables, of a measurable complex-valued function $V\in L^1_{\mathrm{loc}}(X)$ on the Voronoi cells associated with $\Lambda$. Assume moreover that $X$, $\Lambda$, and the ranges of $T_1$ and $T_2$ satisfy the uniform reparametrisation and local constancy hypotheses stated in Section~\ref{sec:abstract}. Then, whenever $2\leq q\leq\infty$ and $1\leq p\leq 2\leq p'\leq \infty$ satisfy $\frac{1}{q}=\frac{1}{p}-\frac{1}{p'}$, one has
\begin{align*}
\mathbf{E}\|T_1^*V_{\omega}T_2\|_{\mathcal{H}_2\to \mathcal{H}_1}
&\lesssim
r^{d/p}
(\log \#\Lambda)^{5/2}
\|T_1\|_{\mathcal{H}_1\to L^{p'}(X)}
\|T_2\|_{\mathcal{H}_2\to L^\infty(X)}
\\
&\qquad \times
\Big(\sum_{x\in \Lambda}\|V\|_{L^\infty(B(x,2r))}^{2q}\Big)^{1/2q},
\end{align*}
with the obvious modification for $q=\infty$.
\end{theorem}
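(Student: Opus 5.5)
We will derive Theorem~\ref{thm:abstract2_intro} from the discrete estimate, Theorem~\ref{thm:abstract1_intro}, by discretising $X$ at scale $r$ (or a slightly smaller scale). The uniform reparametrisation and local constancy hypotheses will be what make this possible.

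\emph{Step 1: reduction to a supremum over a finite set.} For $f_i\in\mathcal H_i$ we write
\[
\langle T_1^*V_\omega T_2 f_2,f_1\rangle=\sum_{x\in\Lambda}\omega_x\int_{C_x}V\,T_2f_2\,\overline{T_1f_1}\,\rd\mu ,
\]
where $C_x$ is the Voronoi cell of $x$. By maximal $r$-separation we have $C_x\subset B(x,r)$, and Ahlfors regularity gives $\mu(C_x)\sim r^d$. We then use the local constancy hypothesis. Its presumed content is that functions in $\Ran T_i$ are, up to harmless errors, controlled on each ball $B(x,r)$ by finitely many sample values, or by a maximal-type function that is essentially constant at scale $r$. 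Combined with uniform reparametrisation, this lets us replace each cell $C_x$ by a fixed finite model set $F$ of points, with $\#F\lesssim 1$ uniformly in $x$. We thus obtain a finite set $E=\Lambda\times F$, partitioned by $E_x=\{x\}\times F$. Sampling gives operators $S_i:\mathcal H_i\to\ell^\infty(E)$ with $S_if(x,y)=T_if(\text{point }y\text{ of }C_x)$, weighted by $\mu(C_x)^{1/p'}$ in the $S_1$ factor and by $\mu(C_x)^{1/p}$ in the multiplier. The guiding bookkeeping is that the $L^{p'}$ norm of $T_1f$ should dominate $\|S_1f\|_{\ell^{p'}(E)}$ once we include the factor $r^{d/p'}$. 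Likewise $\|S_2f\|_{\ell^\infty}\le\|T_2f\|_{L^\infty}$, and the discrete weight $v$ on $E_x$ is bounded by $\|V\|_{L^\infty(B(x,2r))}$. We enlarge the ball to $2r$ to absorb the points used in local constancy.

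\emph{Step 2: apply the discrete estimate.} We then apply Theorem~\ref{thm:abstract1_intro} with this partition and $v_j\cong r^{d}\,\|V\|_{L^\infty(B(x,2r))}$ on $E_x$. Since $\#E_x\lesssim1$, Remark~\ref{rem:Holder} gives
\[
\|v_j\|_{\ell^p}\lesssim r^d\|V\|_{L^\infty(B(x,2r))}.
\]
Next we count powers of $r$. The $S_1$ normalisation uses $r^{-d/p'}$, and $r^{d}\cdot r^{-d/p'}=r^{d/p}$, which produces exactly the prefactor in the statement. The logarithm satisfies $\log\#E\lesssim\log\#\Lambda$, since $\#E\lesssim\#\Lambda$.

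\emph{Main obstacle.} The delicate point is Step 1. The bilinear form over the continuum cell must be dominated by a discrete one in a way that \emph{preserves the randomisation structure}: the same $\omega_x$ must multiply the whole block $E_x$. This is fine, because the randomisation is constant on cells. What is not automatic is passing from an integral over $C_x$ to finitely many samples inside the expectation of an operator norm. We would try the following. Write the integral over $C_x$ as $\int_{F'}$ over the reparametrised model cell. Then view $E$ as $\Lambda\times F'$ with a measure rather than a finite set, and rerun the proof of Theorem~\ref{thm:abstract1_intro}, namely the entropy/Dudley bound, with $\ell^\infty$ over $F'$. Local constancy then keeps the relevant covering numbers polynomial in $\#\Lambda$. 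This costs only constants in the $(\log\#\Lambda)^{5/2}$ factor.
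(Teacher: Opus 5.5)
There is a genuine gap in Step~1, and it is exactly where the difficulty of the theorem lies.

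\textbf{Why Step~1 fails.} Your primary reduction replaces each Voronoi cell by $O(1)$ sample points, so that $\int_{\mathcal V_x}V\,T_2g_2\,\overline{T_1g_1}\,\rd\mu$ becomes a finite sum of sampled values.

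\begin{itemize}
\item Neither local constancy hypothesis of Section~\ref{sec:abstract} supports this. Both are one-sided inequalities: an $L^\infty(B)$ bound by a weighted average of $|f|$, or an $\ell^s$--$L^s$ sampling inequality on $r$-separated sets. Neither is a quadrature or reconstruction formula.
\item Functions in $\Ran T_i$ may oscillate at scale $r$ (e.g.\ spectral clusters at frequency $r^{-1}$). So the cell integral is not even approximately $\mu(\mathcal V_x)$ times a bounded number of samples.
\item An inequality cannot rescue this either. Theorem~\ref{thm:abstract1_intro} needs an exact identity $\langle T_1g_1,V_\omega T_2g_2\rangle=\langle S_1g_1,v_\omega S_2g_2\rangle$ with \emph{linear} $S_i$. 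Moving absolute values inside $\sum_x\omega_x(\cdots)$ destroys the cancellation.
\end{itemize}

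Your fallback reruns the chaining on $\Lambda\times F'$ with $F'$ a continuum. It stands or falls with the claim that the covering numbers of the image of the unit ball under $T_i$, in $\ell^\infty(\Lambda\times F')$, stay polynomial in $\#\Lambda$. Proposition~\ref{prop. abstract chaining} gives $\log\#\mathcal F_k\lesssim 4^k\log\#E$ only for finite $E$, and you offer no argument for a continuous version. As written, this is the unproven crux, not a routine modification.

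\textbf{The missing idea.} No discretisation inside the cells is needed. Keep the model variable $y\in Y$ outside the discrete problem rather than folding it into the index set. Using the maps $\phi_x$ and densities $w_x$ from the uniform reparametrisation property, the change-of-variables formula of Lemma~\ref{lem: Fubini reparametrisation} (applied to the integrand times $\one_{\mathcal V_x}$) gives
\[
\int_X\overline{T_1g_1}\,T_2g_2\,V_\omega\,\rd\mu=\int_Y\sum_{x\in\Lambda}\overline{(S_{1,y}g_1)(x)}\,(S_{2,y}g_2)(x)\,v_{\omega,y}(x)\,\rd\nu(y).
\]
Here $(S_{i,y}g)(x):=(T_ig)(\phi_x^{-1}(y))$ and $v_{\omega,y}(x):=\omega_x(V\one_{\mathcal V_x})(\phi_x^{-1}(y))\,w_x(y)$. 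Taking the supremum over unit $g_1,g_2$ inside the $y$-integral, and then using Tonelli, yields
\[
\mathbf E\|T_1^*V_\omega T_2\|\le\int_Y\mathbf E\|S_{1,y}^*v_{\omega,y}S_{2,y}\|\,\rd\nu(y).
\]
For each fixed $y$, this is exactly Theorem~\ref{thm:abstract1_intro} with $E=\Lambda$ partitioned into singletons: one $\omega_x$ per point, and $\log\#E=\log\#\Lambda$.

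\textbf{Bookkeeping.}
\begin{itemize}
\item Weak local constancy is needed only for $\|S_{1,y}\|_{\mathcal H_1\to\ell^{p'}(\Lambda)}\lesssim r^{-d/p'}\|T_1\|_{\mathcal H_1\to L^{p'}(X)}$. The points $\phi_x^{-1}(y)\in B(x,2r)$ split into $O(1)$ $r$-separated families.
\item $\|S_{2,y}\|_{\mathcal H_2\to\ell^\infty(\Lambda)}\le\|T_2\|_{\mathcal H_2\to L^\infty(X)}$.
\item $w_x\le C_{\rm RN}$ and $\nu(Y)\le C_Yr^d$ give the factor $r^d\bigl(\sum_x\|V\|_{L^\infty(B(x,2r))}^{2q}\bigr)^{1/2q}$.
\end{itemize}
Together these produce $r^{d-d/p'}=r^{d/p}$.

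You were close: you also wanted to sample each cell at a common model point. The elementary interchange $\sup\int\le\int\sup$ is what makes the continuum harmless. It removes any need for entropy bounds over infinite index sets.
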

In the compact-manifold setting, there are two principal choices of \(T_1,T_2\) for which this theorem becomes effective.

\medskip

\noindent
\textbf{(i) Spectral projectors.}
Let
\[
P=\sqrt{-\Delta_g},
\qquad
\Pi_\lambda=\mathbf 1(P\in[\lambda,\lambda+1]).
\]
The range of \(\Pi_\lambda\) is locally constant at wavelength scale \(r=\lambda^{-1}\), and Sogge's spectral cluster bounds give
\[
\|\Pi_\lambda\|_{L^2(M)\to L^{\frac{2(d+1)}{d-1}}(M)}
\lesssim
\lambda^{\frac{d-1}{2(d+1)}}.
\]
Inserting these estimates into Theorem~\ref{thm:abstract2_intro} yields the  randomised spectral cluster estimate
    \begin{align*}
    \mathbf{E}\|\Pi_{\lambda} ^*V_{\omega,\lambda}\Pi_{\lambda} \|_{L^2(M)\to L^2(M)}
    & \lesssim \lambda^{-1}(\log \lambda )^{5/2}\|v(\lambda)\|_{\ell^{d+1}},
  \end{align*}  
see Proposition \ref{prop: manifold spectral cluster expectation}. 
This is the Riemannian analogue of the random Fourier extension estimate \cite[Section 6]{CueninMerz2025}. By sub-Gaussian tail bounds (see Lemma \ref{lem:tailbound}), it follows that with probability at least $1-\exp(-K^2)$, 
\[
\left|\int_M |\Pi_{\lambda}u|^2V_{\omega,\lambda}\rd v_g\right|\lesssim K \lambda^{-1}(\log \lambda )^{5/2}\|v(\lambda)\|_{\ell^{d+1}}\|u\|_{L^2(M)}^2,
\]
while the deterministic bound holds only with the ${\ell^{\frac{d+1}{2}}}$ norm, without the logarithm. 

Whereas \(L^2\to L^{p'}\) bounds measure the size of level sets of \(\Pi_\lambda u\), weighted expressions of the form
\(\int_M |\Pi_\lambda u|^2 V\,\mathrm{d}v_g\)
 provide geometric information about the shape of level sets. In the present random setting, however, \(V_{\omega,\lambda}\) is signed, so this geometric interpretation is less direct.

In the Euclidean setting, weighted Fourier restriction theory is concerned with inequalities of the form
\[
\int_{\R^d}|E_Sf|^2w\,\rd x\leq C(w)\|f\|_{L^2(S)}^2
\]
for deterministic nonnegative weights \(w\). We refer, for example, to \cite{MR4980305,MR4759602} for further background and results.

\medskip

\noindent
\textbf{(ii) Half-resolvents.}
We will prove deterministic estimates for the square root of the resolvent,
\[
\bigl\||-\Delta_g-z|^{-1/2}\bigr\|_{L^2(M)\to L^{\frac{2(d+1)}{d-1}}(M)}
\lesssim
\max(d(z)^{-1/2},\langle z\rangle^{-1/4})
    \log\langle z\rangle^{1/2}
    \langle z\rangle^{\frac{d-1}{4(d+1)}},
\]
see Proposition \ref{prop:L2_to_Lp'_resolvent_estimate}.
Combining these bounds with Theorem~\ref{thm:abstract2_intro} yields a probabilistic estimate for the Birman-Schwinger operator
\[
\mathcal K_{\omega,\lambda}(z)
=
|-\Delta_g-z|^{-1/2}
V_{\omega,\lambda}
(-\Delta_g-z)^{-1/2},
\]
see Proposition \ref{prop:BS_small_q}. The spectral inclusion in Theorem \ref{thm:abstract1_intro} is then obtained by combining this estimate with the Birman--Schwinger principle.

\subsection{Further applications}

The compact-manifold Schr\"odinger problem considered in this paper is only one instance of a more general mechanism that separates the probabilistic from the analytic input.
As an outlook, we briefly indicate a few problems in which we expect this mechanism to apply.

\begin{itemize}
    \item[(i)] \textbf{Random Schr\"odinger operators in other geometric settings.}
    The argument is not specific to compact boundaryless manifolds. It should extend to compact manifolds with boundary, as well as to noncompact spaces such as asymptotically conic or hyperbolic manifolds. The appropriate deterministic spectral cluster and resolvent bounds may be found, e.g., in \cite{MR1257279,MR2316270,MR3805767,MR4150258}.
    Another direction would be to consider random operators on $\R^d$ with trapping background potentials or magnetic fields, see, e.g., \cite{MR2140267,MR2314091}. 
    
    In all of these settings, the length of the spectral cluster window and the proximity of $z$ to the spectrum of the unperturbed operator in the half-resolvent estimates, both of which play an important role in the proof of Theorem \ref{thm:abstract1_intro}, are expected to shrink compared to the compact manifold setting. One then also expects the $r_k$ in Theorem \ref{thm:abstract1_intro} to shrink. This improvement could also be explored for the torus, where shrinking spectral cluster bounds are well-studied, see, e.g., \cite{MR4422439,MR4818473}. 

    Once the location of individual eigenvalues is well understood, one could then investigate their distribution via Lieb--Thirring type inequalities; this was done in the Euclidean setting in \cite{CueninMerz2023}.  

    \item[(ii)] \textbf{Existence and completeness of a.s.\ wave operators in $\R^d$.} Ionescu and Schlag \cite{IonescuSchlag2006} proved the existence and completeness of wave operators for Schr\"odinger operators on $\R^d$ for a large class of deterministic potentials. In particular, their results apply when $V\in L^{\frac{d+1}{2}}(\R^d)$. When the potential is random, we expect an improvement to $L^{d+1-\epsilon}(\R^d)$. This would be an analogue of Bourgain's result \cite{Bourgain2003} for $\Z^2$;
    the first author and Schippa \cite{CueninSchippa2022} also sketched the argument for $\Z^3$. A related question is whether the results of Ionescu--Jerison \cite{IonescuJerison2003} and Koch--Tataru \cite{KochTataru2006} on the absence of embedded eigenvalues for Schr\"odinger operators on $\R^d$ can be strengthened under randomisation. Finally, it would be interesting to investigate the \(L^p\) boundedness of wave operators for random potentials.

    \item[(iii)] \textbf{Discrete models.}
    The discrete abstract estimate, Theorem~\ref{thm:abstract1_intro}, is designed for applications to lattice and graph operators. In particular, it is compatible with spectral projectors and resolvents for discrete Schr\"odinger operators, and may therefore be useful in weak-disorder problems on \(\mathbb Z^d\) and on more general finite-range graph models. This direction is close in spirit to the work of Schlag--Shubin--Wolff and Bourgain discussed above.
\end{itemize}

\subsection{Comparison of Theorem~\ref{thm:main_intro} with deterministic results}\label{comparison between the random and deterministic results}

In \cite{Cuenin2026}, the first author proved 
\begin{align}\label{spectral inclusion}
  \spec(-\Delta_{g}+V)\subset \bigcup_{k=0}^{\infty}D(\lambda_k^2,C\tilde{r}_k^{\rm det})\cup \{z\in\C: |z|^{\frac{1}{2}}(1+|z|)^{-\sigma(q)}\leq C \|V\|_{L^q(M)}\},
\end{align}
where $d/2<q< \infty$,
\(
\tilde{r}_k^{\rm det}:=\|V\|_{L^q(M)}(1+\lambda_k)^{2\sigma(q)},
\)
and $\sigma(q)$ is given by \eqref{sigma(q)}. Specializing to Anderson-type potentials \eqref{eq:Anderson-type} with $\omega_j=1$ and applying this with $q$ replaced by $2q$, it follows that
\[
\{z\in\spec(H_{\omega,\lambda}) : |z|\leq \lambda^2\}
\subset
\bigcup_{\lambda_k\leq \lambda} D(\lambda_k^2,CKR_k')\cup \Omega^{\rm det},
\]
where $d/2<2q< \infty$ and
\[
r_k^{\rm det}:=(1+\lambda_k)^{2\sigma(2q)}\lambda^{-\frac{d}{2q}}\|v(\lambda)\|_{\ell^{2q}},
\]
\[
\Omega^{\rm det}:=\{z\in\C: |z|^{\frac{1}{2}}(1+|z|)^{-\sigma(2q)}\leq C\lambda^{-\frac{d}{2q}}\|v(\lambda)\|_{\ell^{2q}}\}.
\]

 For $\lambda_k\ll \lambda$, one has $\frac{r_k}{r_k^{\rm det}}\ll 1$; more precisely,
\begin{align}\label{eq: r_k/r_kdet}
\frac{r_k}{r_k^{\rm det}}
\leq 
\begin{cases}
(1+\lambda_k)^{d/2}\lambda^{-d/2}(\log\lambda)^{11/2},
& 1\le q\le \frac{d+1}{4},\\[2mm]
(1+\lambda_k)^{\frac{d-2}{2}+\frac{d+1}{4q}}
\lambda^{-d/2}(\log\lambda)^{11/2},
& \frac{d+1}{4}\le q\le \frac{d+1}{2},\\[2mm]
(1+\lambda_k)^{(d-1)/2}\lambda^{-d/2}(\log\lambda)^{11/2},
& \frac{d+1}{2}\le q\le \infty.
\end{cases}
\end{align}

We now compare $\Omega$ and $\Omega^{\rm det}$. From Theorem \ref{thm:main_intro}, it follows that for $z\in\Omega$ and $|z|\leq \lambda^{2-\epsilon}$.


$$
|z|^{1/2}(1+|z|)^{-\sigma(2q)}
\leq C_N K \lambda^{-\frac{d}{2q}-c\epsilon}(\ln\lambda)^{11/2}\|v(\lambda)\|_{\ell^{2q}}.
$$

\subsection{Further remarks on Theorem \ref{thm:main_intro}}

We also make the following remarks on~Theorem~\ref{thm:main_intro}.

\begin{enumerate}
\item For comparison of the compact manifold bounds with the Euclidean bounds of Frank \cite{MR2820160}, we refer to \cite[Section 4]{Cuenin2026}.

\item The trivial bound
\[
\spec(-\Delta_g+V)
\subset
\{z\in\mathbb{C}:\dist(z,\spec(-\Delta_g))\leq \|V\|_{L^\infty}\}
\]
implies that $\spec(H_{\omega,\lambda})$ lies in the $\|v(\lambda)\|_{\ell^\infty}$-neighbourhood of $\{\lambda_k^2\}_{k=0}^\infty$. Since $\ell^q \subset \ell^\infty$, this gives the asymptotically (as $k\to\infty$) weaker bound $r_k \leq \|v(\lambda)\|_{\ell^q}$. 

\item The deterministic bounds of \cite{Cuenin2026} are sharp in both the low-energy and high-energy regimes. The former follows from a scaling argument when $M$ is a torus, while the latter follows from the optimality of Sogge's spectral cluster bounds together with an operator-valued Rouch\'e theorem (Gohberg--Sigal theory) when $M$ is the sphere, or more generally a Zoll manifold. It would be interesting to know whether Theorem \ref{thm:main_intro} is sharp up to the logarithmic loss. Due to the probabilistic nature of the bound, this seems to be a difficult question.

\item For additional background on deterministic and random eigenvalue bounds for Schrödinger operators with complex potentials, we refer to the survey \cite{Stefanescu2026}, which also discusses fractional Laplacians.

\item Theorem \ref{thm:main_intro} remains nontrivial even when $v_j(\lambda) \in \mathbb{R}$, so that $H_{\omega,\lambda}$ is self-adjoint; in this case, it gives a high-probability improvement over the trivial $L^\infty$ spectral inclusion discussed in \textup{(3)}.

\end{enumerate}


\section{Tools from probability theory}
\label{s:abstracttools}

We collect some tools from probability theory that were used in \cite{CueninMerz2025}, and that will also be important here to prove Theorems~\ref{thm:abstract1_intro} and~\ref{thm:abstract2_intro}.

\subsection{Sub-Gaussian random variables and tail bounds}
We recall that a (complex) scalar random variable $X$ is called \emph{sub-Gaussian} if it has finite sub-Gaussian norm,
\begin{align*}
  \|X\|_{\psi_2}=\inf\{t>0:\,\mathbf{E}\exp(|X|^2/t^2)\leq 2\}<\infty.
\end{align*}
We will use the following properties of sub-Gaussian (e.g., centred normalised Gaussian or symmetric Bernoulli) random variables; see, e.g., \cite[Proposition~2.6.1 and Exercise~2.5.10]{Vershynin2018}.

\begin{proposition}[{\cite[Proposition~21]{CueninMerz2025}}]
  \label{prop. properties of subgaussian rv}
  Assume that $(X_j)_{j=1}^{N}$, $N\geq 2$, is a finite collection of i.i.d.~mean-zero sub-Gaussian random variables. 
  \begin{itemize}
  \item[(i)] Then $\sum_{j=1}^{N} X_j$ is also sub-Gaussian and
    \begin{align*}
      \|\sum_{j=1}^{N} X_j\|_{\psi_2}^2
      \lesssim \sum_{j=1}^{N}\|X_j\|_{\psi_2}^2.
    \end{align*} 
  \item[(ii)] We have
    \begin{align}
      \label{eq:dudley}
      \mathbf{E}\max_{j\leq N}|X_j|
      \lesssim \sqrt{\log N}\max_{j\leq N}\|X_j\|_{\psi_2}.
    \end{align}
  \end{itemize}
\end{proposition}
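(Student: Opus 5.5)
Both parts are standard facts about sub-Gaussian variables, so I would prove them directly from the definition of $\|\cdot\|_{\psi_2}$, using the equivalent characterisations of sub-Gaussianity up to absolute constants. For complex variables I would treat real and imaginary parts separately: $\|X\|_{\psi_2}\simeq \|\re X\|_{\psi_2}+\|\im X\|_{\psi_2}$. This loses only a constant in both (i) and (ii), so it suffices to argue for real-valued mean-zero variables.

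For (i), the key input is the moment generating function characterisation. A real mean-zero $X$ with $\|X\|_{\psi_2}\le K$ satisfies $\mathbf{E}\exp(\theta X)\le \exp(C\theta^2K^2)$ for all $\theta\in\R$. Conversely, any such MGF bound implies $\|X\|_{\psi_2}\lesssim K$. To prove the first direction, I expand the exponential in a Taylor series, use the vanishing mean to kill the linear term, and control the higher moments by $\mathbf{E}|X|^p\lesssim (K\sqrt p)^p$, which follows from $\mathbf{E}\exp(X^2/K^2)\le 2$. The converse goes through Chernoff's bound $\mathbf{P}(|X|>t)\le 2\exp(-t^2/(4CK^2))$, after which integrating the tail gives the $\psi_2$ bound. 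By independence, the MGF of $\sum_j X_j$ factorises, giving $\mathbf{E}\exp(\theta\sum_j X_j)\le \exp(C\theta^2\sum_j\|X_j\|_{\psi_2}^2)$. The converse direction then yields (i). The identical distribution hypothesis is not needed here; only independence and mean zero are used.

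For (ii), set $K=\max_j\|X_j\|_{\psi_2}$. I would use Jensen's inequality with the convex function $x\mapsto \exp(x^2/K^2)$ on $[0,\infty)$:
\[
\exp\Big(\big(\mathbf{E}\max_j|X_j|\big)^2/K^2\Big)\le \mathbf{E}\max_j \exp(|X_j|^2/K^2)\le \sum_{j=1}^N \mathbf{E}\exp(|X_j|^2/K^2)\le 2N .
\]
Taking logarithms gives $\mathbf{E}\max_j|X_j|\le K\sqrt{\log(2N)}\lesssim K\sqrt{\log N}$, using $N\ge 2$. Neither independence nor mean zero is needed for this part.

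The only step with any real content is the equivalence between the $\psi_2$ norm and the MGF bound used in (i). I would either cite it from Vershynin (Proposition 2.5.2) or prove it by the moment-comparison argument sketched above. The bookkeeping of absolute constants there is routine, and everything else is elementary.
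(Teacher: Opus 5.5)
Your proposal is correct and follows the source the paper relies on. The paper gives no proof of its own and simply cites Vershynin: Proposition~2.6.1 there is proved by exactly your MGF-factorisation argument. For Exercise~2.5.10, your Jensen step $\exp\big((\mathbf{E}\max_j|X_j|)^2/K^2\big)\le 2N$ is a slicker substitute for the usual union-bound-and-tail-integration argument.

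Keep your remark that (i) needs only independence and mean zero, and (ii) needs neither, because it matches how the paper actually uses the result. The paper applies (i) to the non-identically distributed variables $\omega_j\langle\xi_1,v_j\xi_2\rangle_{\ell^2(E_j)}$. It applies (ii) to the dependent family $\langle\xi_1,v_\omega\xi_2\rangle$ indexed by $\mathcal{F}_{1,k}\times\mathcal{F}_{2,\ell}$. So the stated i.i.d.\ hypothesis is not the one actually needed there.
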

We call \eqref{eq:dudley} Dudley's inequality, cf.~\cite[Section~8.1]{Vershynin2018}.

\smallskip
We now consider tail bounds for (infinite-dimensional) \emph{vector-valued} normalised Gaussian or Bernoulli random variables $X$ which have finite $\psi_2$-norm. We have $(\mathbf{E}\|X\|^p)^{1/p}\asymp(\mathbf{E}\|X\|^q)^{1/q}$ for all $p,q>0$ (cf.~\cite[Corollary 3.2 and Theorem 4.7]{LedouxTalagrand1991}), which, combined with \cite[(3.5), (4.12)]{LedouxTalagrand1991} implies
\begin{align*}
  \mathbf{P}(\|X\| > t)
  \leq \exp\left(-\frac{ct^2}{(\mathbf{E}\|X\|)^2}\right)
\end{align*}
for some $c>0$. Thus, we have the following estimate.

\begin{lemma}[{\cite[Lemma~22]{CueninMerz2025}}]\label{lem:tailbound}
  Let $X$ be a vector-valued Gaussian or Bernoulli random variable.
  If $\mathbf{E}\|X\|\leq C$, then
  \begin{align}
    \mathbf{P}(\|X\| > KC)
    \leq \exp(-cK^2)
  \end{align}
 for any $K>0$.
\end{lemma}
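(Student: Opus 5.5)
The plan is to reduce to a finite Rademacher or Gaussian sum and combine a concentration inequality for the norm with the bound on the weak variance by $\mathbf{E}\|X\|$. In our applications $X=\sum_{j}\omega_j x_j$ is a finite sum, where the $x_j$ lie in a Banach space $B$ (for instance bounded operators $\mathcal H_2\to\mathcal H_1$) and the $\omega_j$ are i.i.d.\ Rademacher or standard Gaussian. I would treat this case and reach the general separable case by approximation, using monotone convergence of the norms. Since $B$ is separable on the span of the $x_j$, I write $\|X\|=\sup_{f\in D}|f(X)|$ with $D$ a countable set of norm-one functionals. I then introduce the weak variance
\[
\sigma:=\sup_{f\in D}\Big(\sum_j |f(x_j)|^2\Big)^{1/2}.
\]

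\emph{Step 1: $\sigma\lesssim \mathbf{E}\|X\|$.} For each $f\in D$, $f(X)=\sum_j\omega_j f(x_j)$ is a scalar Rademacher or Gaussian sum. The scalar Khintchine inequality (lower bound for the first moment) gives $\big(\sum_j|f(x_j)|^2\big)^{1/2}\lesssim \mathbf{E}|f(X)|\le \mathbf{E}\|X\|$. Taking the supremum over $f$ gives the claim.

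\emph{Step 2: concentration of the norm.} In the Gaussian case, the map $\omega\mapsto\|\sum_j\omega_jx_j\|$ is Lipschitz on $\mathbb R^N$ with constant $\sigma$. The Borell--TIS (Gaussian concentration) inequality then gives
\[
\mathbf P\big(\|X\|>\mathbf{E}\|X\|+s\big)\le e^{-s^2/(2\sigma^2)}.
\]
In the Bernoulli case the same map is convex and $\sigma$-Lipschitz, so Talagrand's convex-distance concentration on $\{\pm1\}^N$ gives $\mathbf P(|\|X\|-m|>s)\le 4e^{-s^2/(8\sigma^2)}$ with $m$ a median. By Markov, $m\le 2\mathbf{E}\|X\|$. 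This is exactly the content of \cite[(3.5), (4.12)]{LedouxTalagrand1991} quoted before the lemma. An alternative route is Kahane--Khintchine with the $\sqrt p$ growth $(\mathbf{E}\|X\|^p)^{1/p}\lesssim\sqrt p\,\mathbf{E}\|X\|$, followed by Markov's inequality with the optimised choice $p\simeq K^2$.

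\emph{Step 3: conclusion.} Set $t=KC$ with $\mathbf{E}\|X\|\le C$ and take $s=t-2C$. For $K\ge K_0$ (say $K_0=4$) we have $s\ge t/2$, and Steps 1--2 give $\mathbf P(\|X\|>KC)\le 4\exp(-cK^2C^2/\sigma^2)\le 4\exp(-c'K^2)$. The prefactor $4$ is absorbed by shrinking $c'$, since $K$ is bounded below.

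The main obstacle I expect is the regime of small $K$. The statement cannot hold literally for all $K>0$: for a single Rademacher variable, $\mathbf P(|\omega|>1/2)=1$. So I would prove the bound for $K\ge1$ (which is all that is used, e.g.\ in Theorem~\ref{thm:main_intro}), or state it with a harmless prefactor, $\mathbf P(\|X\|>KC)\le 2e^{-cK^2}$, which is trivially true for small $K$. The other delicate point is checking that the constants in Step 2 do not depend on $N$ or on the infinite-dimensional target. This holds because only $\sigma$ and $\mathbf{E}\|X\|$ enter.
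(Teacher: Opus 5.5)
Your argument is correct and is essentially the paper's: the paper also combines the concentration inequalities \cite[(3.5), (4.12)]{LedouxTalagrand1991} with control of the median and of the weak variance by $\mathbf{E}\|X\|$. The only difference is that the paper gets this control from the moment equivalence \cite[Corollary~3.2, Theorem~4.7]{LedouxTalagrand1991}, whereas you use scalar Khintchine and Markov.

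Your caveat is right, and in the Bernoulli case the failure even reaches $K=1$. For $X=(\omega_1+\omega_{i+1})_{i=1}^n\in\ell^\infty_n$ one has $\|X\|=2$ off an event of probability $2^{-n}$, so $\mathbf{P}(\|X\|>\mathbf{E}\|X\|)=1-2^{-n}$. Hence the valid forms are the one with a prefactor, the one for $K\ge K_0$ with $K_0$ a sufficiently large constant, or $\mathbf{P}(\|X\|>C_0KC)\le\exp(-K^2)$ for $K\ge1$, which is how the paper actually uses the lemma. The second form is also what absorbing your factor $4$ in Step 3 requires: $K_0=4$ need not suffice.
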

We are not aware of generalisations of this lemma to more general sub-Gaussian vector-valued random variables.

\subsection{Entropy bounds}
We state an abstract version of the chaining argument \cite[Corollary~14]{CueninMerz2025}, whose proof was inspired by that of Bourgain \cite{Bourgain2002}. The proof is the same.

\begin{proposition}\label{prop. abstract chaining}
  \label{prop. representaion}
  Let $\mathcal{H}$ be a finite-dimensional Hilbert space, $E$ a nonempty finite set and
  $S:\mathcal{H}\to \ell^{\infty}(E)$ a linear operator.
  For every $k\in \N$, there exist sets $\mathcal{F}_k\subset \ell^{\infty}(E)$ with the following properties.
  \begin{enumerate}
  \item[\rm(a)] $\log\#\mathcal{F}_k\lesssim 4^k \log \# E$.

  \item[\rm(b)] For every $\xi\in \mathcal{F}_k$, we have
    \begin{align}
      \label{eq:boundxiinfty}
      \|\xi\|_{\ell^{\infty}(E)}\lesssim 2^{-k}\|S\|_{\mathcal{H}\to \ell^{\infty}(E)}.
    \end{align}

  \item[\rm(c)] For each $g\in \mathcal{H}$ with $\|g\|_{\mathcal{H}}\leq 1$ there is a representation
    \begin{align}
      Sg = \sum_{k\in \N}\xi_k\quad \text{for some}\quad \xi_k\in\mathcal{F}_k.
    \end{align} 
    Moreover, each $\xi_k$ lies in the image of the unit ball in $\mathcal{H}$ under $S$.
  \end{enumerate}
\end{proposition}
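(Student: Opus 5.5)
We prove the proposition by a dyadic chaining argument built on Maurey's empirical method. Write $A:=\|S\|_{\mathcal{H}\to\ell^\infty(E)}$ and let $B:=S(\{\|g\|_{\mathcal H}\le1\})$ be the image of the unit ball. Because $\mathcal{H}$ is a Hilbert space and $S$ is linear, $B$ is a convex symmetric set with $B\subset A\cdot B_{\ell^\infty(E)}$.

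The key input is a covering statement for $B$. For every $k$ we want a finite set $\mathcal{N}_k\subset B$ with $\log\#\mathcal{N}_k\lesssim 4^k\log\#E$ such that every $f\in B$ admits some $\eta\in\mathcal{N}_k$ with $\|f-\eta\|_{\ell^\infty(E)}\le 2^{-k}A$.

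To construct $\mathcal N_k$ we dualise. For $x\in E$ put $h_x:=S^*\delta_x\in\mathcal{H}$, so that $(Sg)(x)=\langle g,h_x\rangle$ and $\|h_x\|\le A$. We then apply Maurey's lemma in the Hilbert space $\mathcal{H}$. Each unit vector $g$ is approximated by an empirical average $\tilde g=\frac{1}{m}\sum_{i=1}^m \tilde g_i$, where $m\sim 4^k\log\#E$ and the $\tilde g_i$ are drawn from a suitable finite symmetric dictionary. The error $|\langle g-\tilde g,h_x\rangle|$ is controlled simultaneously for all $x\in E$ by Hoeffding's inequality together with a union bound over $E$; this union bound is where $\log\#E$ enters.

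We do not have a finite dictionary in $\mathcal{H}$ directly, so we first reduce. Set $W:=\mathrm{span}\{h_x\}$, which has dimension at most $\#E$. Since $Sg$ depends only on the projection of $g$ onto $W$, we may assume $g\in W$. In $W$ we use the random-sign empirical method. Alternatively, we fall back on the standard dual Sudakov / Bourgain estimate
\[
\log N(S(B_{\mathcal H}),t B_{\ell^\infty(E)})\lesssim (A/t)^2\log\#E,
\]
which is precisely Carl–Pajor / dual Sudakov for maps from a Hilbert space into $\ell^\infty$ of $N$ points. We expect this covering bound to be the main obstacle. The route we would try first is dual Sudakov via Gaussian random vectors in $W$: it requires $\mathbf E\max_x|\langle G,h_x\rangle|\lesssim A\sqrt{\log\#E}$, which holds by Proposition~\ref{prop. properties of subgaussian rv}(ii).

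Given the covering sets, we telescope. Fix $f=Sg\in B$ and let $\eta_k\in\mathcal{N}_k$ be its approximants, with $\eta_0=0$. Set $\xi_k:=\eta_k-\eta_{k-1}$ and
\[
\mathcal{F}_k:=\{\eta-\eta':\eta\in\mathcal N_k,\ \eta'\in\mathcal N_{k-1},\ \|\eta-\eta'\|_\infty\le 3\cdot2^{-k}A\}.
\]
Then we verify the three properties.

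Property (a) holds because $\log\#\mathcal{F}_k\le\log\#\mathcal N_k+\log\#\mathcal N_{k-1}\lesssim4^k\log\#E$.

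Property (b) holds by construction of $\mathcal F_k$.

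For property (c), note first that $\sum_{k\le K}\xi_k=\eta_K\to f$ in $\ell^\infty$, which gives the representation $f=\sum_k\xi_k$. The condition defining $\mathcal F_k$ is met by the triangle inequality: $\|\xi_k\|_\infty\le\|\eta_k-f\|_\infty+\|f-\eta_{k-1}\|_\infty\le 2^{-k}A+2^{-k+1}A=3\cdot2^{-k}A$. Since $\mathcal H$ is finite-dimensional, $B$ is compact.

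It remains to arrange that each $\xi_k$ itself lies in $B$. We do this by rescaling. Because $\mathcal N_k\subset B$ and $B$ is convex and symmetric, $\xi_k/2=\frac12\eta_k-\frac12\eta_{k-1}\in B$. We therefore replace $\xi_k$ by pairs of terms $\xi_k/2$, placed alternately into the index sets. This changes the constants only, and after it each summand lies in the image of the unit ball under $S$, as required.
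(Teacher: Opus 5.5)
Your overall route is right: a covering bound $\log N(S(B_{\mathcal H}),tB_{\ell^\infty(E)})\lesssim (A/t)^2\log\#E$ from dual Sudakov, followed by the telescoping chain $\xi_k=\eta_k-\eta_{k-1}$. This is the Bourgain-type entropy/chaining argument that the paper takes over from Cuenin--Merz without reproducing it. Properties (a), (b) and the expansion in (c) go through as you describe, after three small repairs:
\begin{enumerate}
\item Dual Sudakov produces centres in $W$, not in $B$. To get $\mathcal N_k\subset B$, which your last step relies on, replace each covering ball that meets $B$ by one centred at a point of $B$ inside it, and double the radius.
\item The variables $\langle G,h_x\rangle$ are correlated, so Proposition~\ref{prop. properties of subgaussian rv}(ii) as stated does not literally apply. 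The maximal inequality only needs the marginal $\psi_2$ bounds, so the conclusion still holds (for $\#E\geq 2$).
\item The Maurey paragraph is not an argument as it stands, since no suitable dictionary is constructed. Since dual Sudakov does the job, drop it.
\end{enumerate}

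The genuine gap is the final step, which is meant to put each summand in $B=S(B_{\mathcal H})$. Your increments only satisfy $\xi_k\in 2B$. Splitting each into two copies of $\xi_k/2$ and interleaving them into one sequence changes more than constants. The summand at index $j=2k$ can have size $\sim 2^{-k}A=2^{-j/2}A$, so (b) fails by an unbounded factor. No re-indexing rescues this: levels $1,\dots,K$ generally produce $2K$ summands of size $\sim 2^{-k}A$, but only $K+O(1)$ indices can hold them.

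What is missing is to merge half-increments from adjacent levels instead of interleaving them:
\[
\tilde\xi_1:=\tfrac12(\eta_2+\eta_1),\qquad \tilde\xi_k:=\tfrac12(\eta_{k+1}-\eta_{k-1})\quad(k\geq 2).
\]
\begin{enumerate}
\item Each $\tilde\xi_k$ lies in $B$ directly, by convexity and balancedness of $B$, because $\eta_{k\pm1}\in B$.
\item For $k\geq 2$ we have $\|\tilde\xi_k\|_{\ell^\infty(E)}\leq \tfrac12(2^{-k-1}+2^{-k+1})A\lesssim 2^{-k}A$, and $\|\tilde\xi_1\|_{\ell^\infty(E)}\leq A$.
\item Restricting, as you did, to pairs obeying this bound gives sets with $\log\#\lesssim \log\#\mathcal N_{k+1}+\log\#\mathcal N_{k-1}\lesssim 4^k\log\#E$.
\item The partial sums are $\sum_{k\leq K}\tilde\xi_k=\tfrac12(\eta_{K+1}+\eta_K)\to f$.
\end{enumerate}
Alternatively, the proof of Theorem~\ref{thm:abstract1_intro} only uses $\|\xi\|_{\ell^{p'}(E)}\lesssim\|S\|_{\mathcal H\to\ell^{p'}(E)}$ for $\xi\in\mathcal F_k$. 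For that purpose $\xi_k\in 2B$ already suffices.
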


\section{Abstract results}

In this section, we first prove the discrete abstract estimate (Theorem \ref{thm:abstract1_intro}). We then introduce the appropriate setup for the continuous abstract estimate (Theorem \ref{thm:abstract2_intro}) and prove a rigorous version of it  (Theorem \ref{thm:abstract2}). The results of this section form the technical core of the paper and are of independent interest.

\subsection{Discrete model} We now prove Theorem \ref{thm:abstract1_intro}.
\begin{proof} 1. We prove this for $q<\infty$; the case $q=\infty$ is analogous. We assume first that $\mathcal{H}_1,\mathcal{H}_2$ are finite-dimensional.
 We apply Proposition~\ref{prop. abstract chaining} to both $S_1$ and $S_2$ and denote the resulting sets by $\mathcal{F}_{1,k}$ and $\mathcal{F}_{2,\ell}$, respectively. Then
 \begin{align*}
\|S_1^*v_{\omega}S_2\|_{\mathcal{H}_2\to \mathcal{H}_1}
=\sup_{\|g_1\|_{\mathcal{H}_1}=\|g_2\|_{\mathcal{H}_2}=1}|\langle S_1g_1,v_{\omega}S_2g_2\rangle|
\leq \sum_{k,\ell\in \N}\max_{(\xi_1,\xi_2)\in \mathcal{F}_{1,k}\times \mathcal{F}_{2,\ell}}|\langle\xi_1,v_{\omega}\xi_2\rangle|,
 \end{align*}
 where $\langle\cdot,\cdot\rangle$ is the inner product in $\ell^2(E)$. 
 By monotonicity of the expectation,
 \begin{align*}
\mathbf{E}\|S_1^*v_{\omega}S_2\|_{\mathcal{H}_2\to \mathcal{H}_1} \leq \sum_{k,\ell\in \N}\mathbf{E}\max_{(\xi_1,\xi_2)\in \mathcal{F}_{1,k}\times \mathcal{F}_{2,\ell}}|\langle\xi_1,v_{\omega}\xi_2\rangle|.   
 \end{align*}
 Note that, by definition of $v_{\omega}$ in \eqref{def:v_omega discrte} and Proposition \ref{prop. properties of subgaussian rv} (i), we have
\begin{align}\label{eq:langle_xi1_vomega_xi2_rangle}
\|\langle\xi_1,v_{\omega}\xi_2\rangle\|_{\psi_2}
 =\Big\|\sum_{j\in J}\omega_j\langle \xi_1,v\xi_2\rangle_{\ell^2(E_j)}\Big\|_{\psi_2}\lesssim \Big(\sum_{j\in J}|\langle \xi_1,v_j\xi_2\rangle_{\ell^2(E_j)}|^2\Big)^{1/2},
 \end{align}
 where we have also used  $\sup_{j\in J}\|\omega_j\|_{\psi_2}<\infty$.
By Proposition~\ref{prop. abstract chaining}~(a), the cardinality $N$ of the set $\mathcal{F}_{1,k}\times \mathcal{F}_{2,\ell}$ satisfies
\begin{align}\label{eq:logN}
\log N=
    \log\# (\mathcal{F}_{1,k}\times \mathcal{F}_{2,\ell})
    \lesssim (4^k+4^{\ell})\, \log \# E.
\end{align}
By Dudley's inequality \eqref{eq:dudley},
\eqref{eq:langle_xi1_vomega_xi2_rangle}, \eqref{eq:logN}, disjointness of the sets $E_j$, and H\"older's inequality for sums, \eqref{eq:boundxiinfty},
\begin{align*}
  & \mathbf{E}\max_{(\xi_1,\xi_2)\in \mathcal{F}_{1,k}\times \mathcal{F}_{2,\ell}}| \langle\xi_1,v_{\omega}\xi_2\rangle|
  \lesssim \sqrt{\log N}\Big(\sum_{j\in J}|\langle \xi_1,v_j\xi_2\rangle_{\ell^2(E_j)}|^2\Big)^{\frac{1}{2}} \\
  & \leq \max_{(\xi_1,\xi_2)\in \mathcal{F}_{1,k}\times \mathcal{F}_{2,\ell}} \sqrt{\log N}\Big(\sum_{j\in J}\| \xi_1\|_{\ell^{p'}(E_j)}^2\|v\xi_2\|_{\ell^{p}(E_j)}^2\Big)^{\frac{1}{2}} \\
  & \leq \max_{(\xi_1,\xi_2)\in \mathcal{F}_{1,k}\times \mathcal{F}_{2,\ell}} \sqrt{\log N}\Big(\sum_{j\in J}\| \xi_1\|_{\ell^{p'}(E_j)}^2\|\xi_2\|_{\ell^{\infty}(E_j)}^2\|v\|_{\ell^{p}(E_j)}^2\Big)^{\frac{1}{2}} \\
  & \leq \max_{(\xi_1,\xi_2)\in \mathcal{F}_{1,k}\times \mathcal{F}_{2,\ell}} \sqrt{\log N}\|\xi_1\|_{\ell^{p'}(E)}\|\xi_2\|_{\ell^{\infty}(E)}\Big(\sum_{j\in J}\|v\|_{\ell^{p}(E_j)}^{2q}\Big)^{\frac{1}{2q}} \\
  & \lesssim \sqrt{\log \# E}(2^k+2^{\ell})2^{-\ell}\|S_1\|_{\mathcal{H}_1\to \ell^{p'}(E)}\|S_2\|_{\mathcal{H}_2\to \ell^{\infty}(E)}\Big(\sum_{j\in J}\|v\|_{\ell^p(E_j)}^{2q}\Big)^{\frac{1}{2q}}.
\end{align*}
Interchanging the roles of $\xi_1,\xi_2$, we can replace $(2^k+2^{\ell})2^{-\ell}$ by $(2^k+2^{\ell})2^{-k}$, so that 
\begin{align*}
  & \mathbf{E}\max_{(\xi_1,\xi_2)\in \mathcal{F}_{1,k}\times \mathcal{F}_{2,\ell}}| \langle\xi_1,v_{\omega}\xi_2\rangle| \\
  & \quad \lesssim \sqrt{\log \# E}\|S_1\|_{\mathcal{H}_1\to \ell^{p'}(E)}\|S_2\|_{\mathcal{H}_2\to \ell^{\infty}(E)}\Big(\sum_{j\in J}\|v\|_{\ell^p(E_j)}^{2q}\Big)^{\frac{1}{2q}} =: A.
\end{align*}
Combining this with the trivial bound
\begin{align*}
\max_{(\xi_1,\xi_2)\in \mathcal{F}_{1,k}\times \mathcal{F}_{2,\ell}}|\langle\xi_1,v_{\omega}\xi_2\rangle|\lesssim 2^{-k-\ell}\|S_1\|_{\mathcal{H}_1\to \ell^{\infty}(E)}\|S_2\|_{\mathcal{H}_2\to \ell^{\infty}(E)}\|v(\lambda)\|_{\ell^1(E)}=:2^{-k-\ell}B,
\end{align*}
we get
\begin{align*}
\mathbf{E}\|S_1^*v_{\omega}S_2\|_{\mathcal{H}_2\to \mathcal{H}_1}
&\lesssim \sum_{k,\ell\in \N}\min(A,2^{-k-\ell}B)
\lesssim A(1+(\log B/A)^2).
\end{align*}
By H\"older's inequality and the inclusion $\ell^{s}(E)\subset\ell^{r}(E)$, for $s\leq r$, we have  
\begin{align*}
  B
  \leq \|S_1\|_{\mathcal{H}_1\to \ell^{p'}(E)}\|S_2\|_{\mathcal{H}_2\to \ell^{\infty}(E)}
  (\# E)^{\frac{1}{p'}}\Big(\sum_{j\in J}\|v\|_{\ell^p(E_j)}^{2q}\Big)^{\frac{1}{2q}}
\end{align*}
so that $\log(B/A)\leq \frac{1}{p'}\log\#E$.
This concludes the proof for the case where $\mathcal{H}_1,\mathcal{H}_2$ are finite-dimensional.

2. In the case where $\mathcal{H}_1,\mathcal{H}_2$ are infinite-dimensional and separable, let 
\begin{align*}
    \mathcal{H}_i^{(1)}\subset \mathcal{H}_i^{(2)}\subset\ldots\subset \mathcal{H}_i,\quad i=1,2,
\end{align*}
be an increasing sequence of finite-dimensional subspaces whose union is dense in $\mathcal{H}_i$. Let $P_i^{(k)}$ be the orthogonal projections onto $\mathcal{H}_i^{(k)}$. For each $\omega=\{\omega_j\}_{j\in J}$ and $k\in\N$ we set
\begin{align*}
N_k(\omega):=\|P_1^{(k)}S_1^*v_{\omega}S_2P_2^{(k)}\|_{\mathcal{H}_2^{(k)}\to \mathcal{H}_1^{(k)}}.
\end{align*}
For fixed $\omega$, the numbers $N_k(\omega)$ are nondecreasing in $k$ and 
\begin{align*}
\lim_{k\to\infty}N_k(\omega)=\sup_{k\in\N} N_k(\omega)=\|S_1^*v_{\omega}S_2\|_{\mathcal{H}_2\to \mathcal{H}_1}.
\end{align*}
By the monotone convergence theorem,
\begin{align*}
\mathbf{E}\|S_1^*v_{\omega}S_2\|_{\mathcal{H}_2\to \mathcal{H}_1}=\lim_{k\to\infty}\mathbf{E}N_k(\omega).
\end{align*}
Since $\mathbf{E}N_k(\omega)$ is uniformly bounded in $k$ by the right-hand side of \eqref{eq:abstract1}, the general case is proved.
\end{proof}

\subsection{Continuous model}\label{sec:abstract}
We now introduce the setup for the continuous model. The main result of this subsection is Theorem \ref{thm:abstract2}.

\begin{definition}
\label{defdef: alpha regular measure}
  Let $(X,\rho,\mu)$ be a metric measure space and $d>0$. We say that $X$ is \emph{Ahlfors $d$-regular} if $\mu$ is a Radon measure and there is $C_A>0$ such that
  \begin{align}\label{def: alpha regular measure}
    C_A^{-1}r^{d} 
    \leq \mu(B(x,r))
    \leq C_A r^{d}
\end{align}
  for all balls $B(x,r)\subset X$ with $r\in (0,1)$. 
  We say $X$ is \emph{upper/lower Ahlfors $d$-regular} if the upper/lower bound in \eqref{def: alpha regular measure} holds.
\end{definition}

Clearly, if $X$ is upper Ahlfors $d$-regular, then for all $p\in [1,\infty]$,
\begin{align}\label{eq: abstract Hoelder}
  \|f\|_{L^{p}(B(x,r))}\leq C_A^{1/p}r^{d/p}\|f\|_{L^{\infty}(B(x,r))}.    
\end{align}

Note furthermore that if $X$ is Ahlfors $d$-regular, then $X$ is $\sigma$-finite.

\begin{definition}
Let $(X,\rho,\mu)$ be a metric measure space and $r>0$.
We say that a collection of measurable functions $\mathcal{F}$ on $X$ satisfies the \emph{local constancy property at scale $r$} if there exists a constant $C_{\rm loc}$ such that for all $f\in\mathcal{F}$ and all balls $B\subset X$ with radii at most $2r$, we have 
\begin{align}\label{eq. local constancy property}
 \|f\|_{L^{\infty}(B)}\leq C_{\rm loc}\int_{X}|f|w_B\rd\mu,
\end{align}
where the weights $w_B$ are given by
\begin{align*}
    w_B(x):=\mu(B)^{-1}(1+r^{-1}\dist(x,B))^{-100d}.
\end{align*}
\end{definition}

\begin{remark}\label{rem:locallyconstant}
  (1) By definition, if $\mathcal{F}$ satisfies the local constancy property at scale $r$, then the same is true for all scales $\leq r$.

  \noindent
  (2) As an example, for $X=\R^d$, $\rho$ the Euclidean metric, and $\mu$ the $d$-dimensional Lebesgue measure, the family of functions $\{f\in \mathcal{S}(\R^d) \,|\, \exists a\in\R^d,b>0:\,\supp\hat f \subseteq B_a(b/r)\}$ satisfies the local constancy property at scale $r$.
\end{remark}

\begin{definition}
 Let $(X,\rho)$ be a metric space and $r>0$. We call $\Lambda\subset X$ an {\em $r$-separated set} if
\begin{align*}
    \rho(x,x')>r\quad\mbox{for all distinct }x,x'\in \Lambda.
\end{align*}
We call $\Lambda$ {\em maximal $r$-separated} if $\Lambda$ is $r$-separated and if for every $x\in X\setminus\Lambda$ there exists $x'\in\Lambda$ such that $\rho(x,x')\leq r$.
\end{definition}

\begin{definition}
Let $(X,\rho,\mu)$ be a metric measure space and $r>0$.
 We say that a collection of measurable functions $\mathcal{F}$ on $X$ satisfies the \emph{weak local constancy property at scale $r$} if for every $s\in [1,\infty]$ there exists a constant $C_s$ such that for all $r$-separated sets $\Lambda\subset X$ and $f\in \mathcal{F}$, we have 
\begin{align}\label{eq:abstract comparison ellp Lp} 
\Big(\sum_{x\in\Lambda}|f(x)|^s\Big)^{1/s}\leq C_sr^{-d/s}\Big(\int_X|f|^s
\rd\mu\Big)^{1/s}.
\end{align}
\end{definition}

\begin{proposition}\label{cor. abstract comparison ellp Lp}
  Let $X$ be Ahlfors $d$-regular, $r>0$, and $\Lambda\subset X$ be an $r$-separated set. Let $T:\mathcal{H}\to L^{p}(X)$ be a bounded linear operator for some $p\in [1,\infty]$. Assume $\Ran T$ has the weak local constancy property at scale $r$. Then for $S_{\Lambda}:\mathcal{H}\to \ell^{p}(\Lambda)$ defined by $S_{\Lambda}f(x):=Tf(x)$, $x\in\Lambda$, we have
  \begin{align*}
    \|S_{\Lambda}\|_{\mathcal{H}\to \ell^{p}(\Lambda)}\leq C_pr^{-d/p}\|T\|_{\mathcal{H}\to L^{p}(X)}.
  \end{align*}
\end{proposition}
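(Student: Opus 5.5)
This is essentially a direct application of the weak local constancy property to the functions $Tf$, $f\in\mathcal{H}$. Fix $f\in\mathcal{H}$ and set $F:=Tf\in\Ran T$. The plan is to apply \eqref{eq:abstract comparison ellp Lp} with $s=p$ to $F$ and the given $r$-separated set $\Lambda$. Since $S_\Lambda f(x)=F(x)$ for $x\in\Lambda$, this gives
\[
\|S_\Lambda f\|_{\ell^p(\Lambda)}=\Big(\sum_{x\in\Lambda}|F(x)|^p\Big)^{1/p}\leq C_p r^{-d/p}\|F\|_{L^p(X)}\leq C_p r^{-d/p}\|T\|_{\mathcal{H}\to L^p(X)}\|f\|_{\mathcal{H}}.
\]
Taking the supremum over $\|f\|_{\mathcal{H}}\leq 1$ then yields the claimed operator bound. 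Linearity of $S_\Lambda$ is immediate from linearity of $T$.

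For $p=\infty$ the sum is replaced by $\sup_{x\in\Lambda}|F(x)|$, and the weak local constancy property with $s=\infty$ gives the bound with $r^{-d/\infty}=1$. This is consistent with the general statement.

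The only point requiring care is that $S_\Lambda$ is well defined. Elements of $L^p(X)$ are equivalence classes, so evaluating $Tf$ at the points of $\Lambda$ requires choosing representatives. I would handle this by noting that the weak local constancy hypothesis is itself formulated for pointwise values of $f\in\Ran T$. Implicitly, then, $\Ran T$ consists of functions with distinguished representatives, for instance continuous ones in the manifold applications, and we fix that choice. With this convention the argument is a one-line application of the definition. Ahlfors regularity is not really needed beyond making sense of the setting, and I expect no genuine obstacle.
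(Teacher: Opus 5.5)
Your proposal is correct and matches the paper's argument, which simply states that the bound ``follows immediately from the definition'': apply the weak local constancy property with $s=p$ to $Tf\in\Ran T$, then take the supremum over the unit ball of $\mathcal{H}$. Your remark about fixing pointwise representatives of $Tf$ is a reasonable clarification that the paper leaves implicit.
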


\begin{proof}
  This follows immediately from the definition.
 \end{proof}

\begin{proposition}\label{prop. abstract comparison ellp Lp}
The local constancy property (at scale $r$) implies the weak local constancy property (at scale $r$).
\end{proposition}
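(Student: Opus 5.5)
The plan is to prove \eqref{eq:abstract comparison ellp Lp} pointwise at each $x\in\Lambda$, then sum. Since the constant $C_s$ may depend on $d$, $C_A$ and $C_{\rm loc}$, we can afford to lose constants at every step. We may assume the right-hand side of \eqref{eq:abstract comparison ellp Lp} is finite.

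\textbf{Pointwise bound.} Fix $f\in\mathcal F$, $s\in[1,\infty)$ (the case $s=\infty$ is immediate from \eqref{eq. local constancy property}, see below), and an $r$-separated set $\Lambda$. For $x\in\Lambda$ let $B_x:=B(x,r/2)$, a ball of radius $\leq 2r$. Then \eqref{eq. local constancy property} gives
\[
|f(x)|\leq \|f\|_{L^\infty(B_x)}\leq C_{\rm loc}\int_X|f|\,w_{B_x}\,\rd\mu .
\]
The middle inequality needs a remark: $L^\infty$ means an essential supremum, so I would either interpret $f(x)$ for the representative used in the definition, or note that local constancy makes $f$ essentially bounded near $x$ and pass to the appropriate representative. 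I will not dwell on this.

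\textbf{Apply H\"older/Jensen.} Lower Ahlfors regularity gives $\mu(B_x)\gtrsim r^d$, at least for $r<2$. For larger $r$ one would need separate care; Ahlfors regularity is only assumed for radii $<1$, so I would note this restriction or assume $r\lesssim 1$, which is the relevant regime. Hence $w_{B_x}(y)\lesssim r^{-d}(1+r^{-1}\dist(y,B_x))^{-100d}$. Write $\phi_x(y):=(1+r^{-1}\dist(y,B_x))^{-100d}$. Jensen (or H\"older) with respect to the weight $\phi_x$ gives
\[
\Big(\int|f|\phi_x\Big)^s\leq \Big(\int\phi_x\Big)^{s-1}\int|f|^s\phi_x .
\]

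\textbf{Summing over $\Lambda$: the key step.} The main obstacle is to show
\begin{align}
\int_X\phi_x\,\rd\mu\lesssim r^d \quad\text{and}\quad \sum_{x\in\Lambda}\phi_x(y)\lesssim 1 \quad\text{uniformly in } y.
\end{align}
Both follow from a dyadic decomposition. Split into shells $A_m=\{y:\dist(y,B_x)\sim 2^m r\}$. By upper regularity, $\mu(A_m)\lesssim(2^mr)^d$, which is valid while $2^mr<1$; for larger shells I would need a global bound on $\mu$. On compact $X$ (the case of interest) the decay factor $2^{-100dm}$ dominates, so I expect this part to go through. For the second claim, fix $y$. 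The points $x\in\Lambda$ with $\rho(x,y)\leq 2^mr$ have disjoint balls $B(x,r/2)$ contained in $B(y,2^{m+1}r)$, so by Ahlfors regularity there are $\lesssim 2^{md}$ of them. Each contributes $\lesssim 2^{-100dm}$, so the sum over $m$ converges.

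\textbf{Conclusion.} Combining,
\[
\sum_{x\in\Lambda}|f(x)|^s\lesssim \sum_x r^{-ds}(r^d)^{s-1}\int|f|^s\phi_x = r^{-d}\int|f|^s\sum_x\phi_x\lesssim r^{-d}\int_X|f|^s\,\rd\mu .
\]
This is \eqref{eq:abstract comparison ellp Lp} with the correct power $r^{-d/s}$ after taking $s$-th roots. For $s=\infty$, the same single-ball bound with $s=1$ replaced by the trivial estimate $\int|f|w_{B}\leq \|f\|_\infty\int w_B\lesssim\|f\|_\infty$ finishes the proof.
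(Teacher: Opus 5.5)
Your proof is correct and is essentially the paper's argument: local constancy on a small ball around each $x\in\Lambda$, H\"older against the weight using $\int w_B\,\rd\mu\lesssim 1$, and then Tonelli with the bounded-overlap estimate $\sum_{x\in\Lambda} w_{B(x,r)}\lesssim r^{-d}$. You obtain that estimate by counting points of $\Lambda$ in dyadic annuli around $y$ via the disjoint balls $B(x,r/2)$; note that only the points with $\rho(x,y)\sim 2^m r$, not all points with $\rho(x,y)\le 2^m r$, carry weight $\lesssim 2^{-100dm}$. The caveats you flag are also used silently in the paper's proof: Ahlfors bounds are assumed only for radii below $1$ while the shell and counting estimates use them at all scales, and a representative must be chosen when bounding $|f(x)|$ by an $L^\infty$ norm.
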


\begin{proof}
  Assume that $\mathcal{F}$ has the local constancy property at scale $r$.
  Let $\Lambda\subset X$ be an $r$-separated set and $f\in \mathcal{F}$. By \eqref{eq. local constancy property} and H\"older's inequality,
  \begin{align*}
    \sum_{x\in\Lambda}|f(x)|^s
    & \leq \sum_{x\in\Lambda}\|f\|_{L^{\infty}(B(x,r))}^s    
      \leq C_{\rm loc}^s \sum_{x\in\Lambda}\Big(\int_{X}|f|w_{B(x,r)}\rd\mu\Big)^s \\
   & \leq C_{\rm loc}^s \sum_{x\in\Lambda} \Big(\int_{X}|f|^sw_{B(x,r)}\rd\mu\Big)\Big(\int_X w_{B(x,r)}\rd\mu\Big)^{s-1} \\
    &\leq C_{\rm loc}^s (C_A2^{2d+2})^{s-1}\sum_{x\in\Lambda}\int_{X}|f|^sw_{B(x,r)}\rd\mu,
  \end{align*}
  where we have also used both the upper and lower bounds in \eqref{def: alpha regular measure} to estimate
  \begin{align*}
    \int_X w_B\rd \mu
    & \leq r^{-d}\big(\mu(\{\dist(y,B)\leq r\})+\sum_{k=0}^{\infty}2^{-100dk}\mu(\{r2^k\leq\dist(y,B)\leq r2^{k+1}\})\big) \\
    & \leq C_A\big(2^d+\sum_{k=0}^{\infty}(1+2^{k+1})^d2^{-100dk}\big)\leq 2^{2d+2}C_A.
  \end{align*}
  Since Ahlfors regularity implies $\sigma$-finiteness of $X$, Tonelli's theorem  and a similar calculation as above yield
  \begin{align*}
    \sum_{x\in\Lambda}\int_{X}|f|^sw_{B(x,r)}\rd\mu
    & = \int_{X}|f|^s\sum_{x\in\Lambda}w_{B(x,r)} \rd\mu\\
    & \leq r^{-d}\int_{X}|f|^s\sum_{x\in\Lambda}\big(\mathbf{1}_{B(x,2r)}+\sum_{k=0}^{\infty}2^{-100dk}\mathbf{1}_{B(x,2^{k+2}r)}\big)\rd\mu \\
    & \leq C_A^2\big(5^d+\sum_{k=0}^{\infty}2^{-100dk}2^{(k+4)d}\big)r^{-d}\int_X |f|^s\rd\mu\\
    & \leq 2^{4d+2}C_A^2r^{-d}\int_X |f|^s\rd\mu,
  \end{align*}
  from which \eqref{eq:abstract comparison ellp Lp} follows. In the penultimate inequality, we have used that for $M\geq 1$,
  \begin{align}\label{eq: abstract overlap bound}
    \sum_{x\in\Lambda}\mathbf{1}_{B(x,Mr)}\leq C_A^2(2M+1)^d.
  \end{align}
  Indeed, for fixed $y\in X$, consider the set
  \begin{align*}
    \Lambda_y := \{x\in \Lambda:y\in B(x,Mr)\}.
  \end{align*}
  Since $\Lambda$ is $r$-separated, the balls $\{B(x,r/2)\}_{x\in\Lambda}$ are disjoint. By the triangle inequality,
  \begin{align*}
    \bigcup_{x\in\Lambda_y}B(x,r/2)\subset B(y,(M+1/2)r),
  \end{align*}
  so that, by \eqref{def: alpha regular measure} (both upper and lower bounds), 
  \begin{align*}
    C_A^{-1}2^{-d}r^d\#\Lambda_y
    \leq \sum_{x\in\Lambda_y}\mu(B(x,r/2))
    \leq \mu(B(y,(M+1/2)r))
    \leq C_A(M+1/2)^dr^d.
  \end{align*}
  Since $y$ was arbitrary, this proves \eqref{eq: abstract overlap bound}. 
\end{proof}

The following development merely uses the weak local constancy property. In applications, the local constancy property can be verified rather directly; see Appendix~\ref{s:localconstancy} below in the case of the range of the spectral projector for $-\Delta_g$.

\begin{definition}
  Given a maximal $r$-separated subset $\Lambda$ of a metric space $(X,\rho)$, we denote by $\mathcal{V}_x$ the {\em Voronoi cell} associated to $x\in\Lambda$,
  \begin{align}
    \mathcal{V}_x := \{y\in X:\rho(x,y)<\rho(x',y)\mbox{ for all }x'\in\Lambda\setminus\{x\}\}.
  \end{align}
\end{definition}

All Voronoi cells are mutually disjoint and, if $X$ is  Ahlfors $d$-regular, $\{\mathcal{V}_x\}_{x\in\Lambda}$ covers $X$ up to a set of measure zero. Thus, for any integrable function $f:X\to \C$,
\begin{align*}
  \int_X f\rd\mu=\sum_{x\in\Lambda}\int_{\mathcal{V}_x}f\rd\mu.
\end{align*}

\begin{lemma}\label{lemma: radius Voronoi}
  Let $(X,\rho)$ be a metric space, $r>0$, and $\Lambda\subseteq X$ be a maximal $r$-separated subset. Then $B(x,r/2)\subset \mathcal{V}_x\subset B(x,2r)$ for all $x\in\Lambda$.
\end{lemma}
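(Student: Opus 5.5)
The two inclusions use different halves of the definitions: the inner one uses $r$-separation of $\Lambda$ together with the strict inequality in the definition of $\mathcal{V}_x$, and the outer one uses maximality of $\Lambda$.

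\emph{Inner inclusion $B(x,r/2)\subset\mathcal{V}_x$.} Fix $x\in\Lambda$ and $y$ with $\rho(x,y)<r/2$. For any $x'\in\Lambda\setminus\{x\}$, separation gives $\rho(x,x')>r$. The triangle inequality then yields
\[
\rho(x',y)\geq \rho(x,x')-\rho(x,y)>r-r/2=r/2>\rho(x,y).
\]
Since $x'$ was arbitrary, $\rho(x,y)<\rho(x',y)$ for all $x'\in\Lambda\setminus\{x\}$, so $y\in\mathcal{V}_x$. (This is the case whether the ball is taken open or closed, since both inequalities above are strict.)

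\emph{Outer inclusion $\mathcal{V}_x\subset B(x,2r)$.} Let $y\in\mathcal{V}_x$. By maximality there is $x'\in\Lambda$ with $\rho(x',y)\leq r$: either $y\in\Lambda$ and we take $x'=y$, or $y\notin\Lambda$ and the definition of a maximal $r$-separated set supplies such an $x'$. If $x'=x$, then $\rho(x,y)\leq r<2r$. If $x'\neq x$, the Voronoi condition gives $\rho(x,y)<\rho(x',y)\leq r$. In either case $\rho(x,y)\leq r<2r$, so $y\in B(x,2r)$.

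In fact the argument shows the sharper inclusion $\mathcal{V}_x\subset \overline{B}(x,r)$; the radius $2r$ in the statement only leaves room for slack. There is no real obstacle here. The only point needing care is the bookkeeping: strict versus non-strict inequalities, and the case $y\in\Lambda$, which the definition of maximality does not explicitly cover and which is handled above by taking $x'=y$.
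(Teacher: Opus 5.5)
Your proof is correct and follows essentially the same route as the paper: the inner inclusion comes from separation plus the triangle inequality, and the outer one from maximality. The paper phrases the outer inclusion as a contradiction with $\rho(x,y)\geq 2r$, using that $\Lambda$ is an $r$-net; your direct version makes explicit the sharper inclusion $\mathcal{V}_x\subset\overline{B}(x,r)$, which the paper's argument also yields implicitly.
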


\begin{proof}
  $B(x,r/2)\subset \mathcal{V}_x$:  Let $y\in B(x,r/2)$ and $x'\in\Lambda\setminus\{x\}$. Then 
  \begin{align*}
    \rho(x',y)\geq \rho(x',x)-\rho(x,y)\geq r-\frac{r}{2}=\frac{r}{2},
  \end{align*}
  which implies $\rho(x,y)< r/2 \leq \rho(x',y)$ and hence $y \in\mathcal{V}_{x}$.

 \smallskip
 $\mathcal{V}_x\subset B(x,2r)$: Let $y\in \mathcal{V}_x$. Without loss of generality, we may assume $y\neq x$, otherwise there is nothing to prove. Assume for contradiction that $\rho(x,y)\geq 2r$. Since $\Lambda$ is an $r$-net (see e.g. \cite[Lemma~4.2.6]{Vershynin2018}), there exists $x'\in\Lambda\setminus\{x\}$ such that $\rho(x',y)\leq r$. But this implies
 \begin{align*}
     r\geq \rho(x',y)>\rho(x,y)\geq 2r,
 \end{align*}
 a contradiction.
\end{proof}

\begin{definition}\label{asspt: uniform Radon-Nikodym}
Let $(X,\rho,\mu)$ be Ahlfors $d$-regular, $r>0$, and
let $\Lambda\subset X$ be an $r$-separated set. 
\begin{enumerate}
    \item We say that the pair $(X,\Lambda)$ satisfies the \emph{uniform reparametrisation property at scale $r$}
if there exists a finite measure space $(Y,\nu)$ with $\nu(Y)\leq C_Yr^d$ and, for each $x\in\Lambda$, a measurable bijection 
$\phi_x : X\supset B(x,2r) \to Y$ with measurable inverse,
such that the pushforward measures $\mu_x:=(\phi_x)_*(\mu|_{B(x,2r)})$ 
are absolutely continuous with respect to $\nu$, and the Radon–Nikodym derivatives
$w_x = \frac{d\mu_x}{d\nu}$ satisfy the uniform bounds
\begin{align}\label{eq: uniform bounds on Radon–Nikodym derivatives}
0\leq w_x(y) \leq C_{\rm RN}\qquad \text{for all $x\in\Lambda$ and $\nu$-a.e.\ $y\in Y$}.
\end{align}
\item We say that $(X,\Lambda)$ satisfies the \emph{strong uniform reparametrisation property at scale $r$} if there exists a separable metric measure space $(Y,\delta,\nu)$ with finite Ahlfors $d$-regular measure $\nu$ satisfying $C_Y^{-1}r^d \leq \nu(Y) \leq C_Yr^d$ and, for each $x\in\Lambda$, a bi-measurable bijection $\phi_x : X\supset B(x,2r) \to Y$ such that
$\phi_x^{-1}$ is Lipschitz with uniform Lipschitz bounds in $x$, i.e., there exists a constant $C_{L}$ such that 
\begin{align*}
   \rho(\phi_x^{-1}(y),\phi_x^{-1}(z))\leq C_{L} \delta(y,z) \quad\mbox{for all }y,z\in Y.
\end{align*} 
\item We say that $X$ satisfies the \emph{(strong) uniform reparametrisation property at scales $\geq r$} if for every $r'\geq r$ and every $r'$-separated set $\Lambda\subset X$, the pair $(X,\Lambda)$ satisfies the (strong) uniform reparametrisation property at scale $r'$.
\end{enumerate}
\end{definition}

\begin{remark}
(1) In the uniform reparametrisation property, the upper bound
\(\nu(Y)\leq C_Y r^d\) is in fact complemented by a lower bound of
the same order. Indeed, by Ahlfors \(d\)-regularity of \(X\), for each
\(x\in\Lambda\),
\[
    \mu(B(x,2r)) \geq C_A^{-1} (2r)^d,
\]
while the Radon--Nikodym bound gives
\[
    \mu(B(x,2r))
    = \int_Y w_x\,d\nu
    \leq C_{\rm RN}\nu(Y).
\]
Hence
\[
    \nu(Y)\geq 2^dC_A^{-1}C_{\rm RN}^{-1}r^d.
\]

\noindent
(2) The uniform reparametrisation property is a purely
measure-theoretic condition: it provides a common measurable model for
the balls \(B(x,2r)\), together with uniform control of the associated
Radon--Nikodym densities. The strong uniform reparametrisation property is a strengthening, in which the common model is a
metric measure space and the Radon--Nikodym condition is replaced by a uniform
Lipschitz condition. The main abstract theorem in this subsection (Theorem \ref{thm:abstract2}) only uses the weaker assumption. However, in most applications, $X$ will satisfy the stronger assumption (see Remark \ref{rem:examples reparametrisation}).
Moreover, the strong assumption is usually easier to verify. This is the reason we included it in the above definition. 
\end{remark}

\begin{remark}[Examples]\label{rem:examples reparametrisation}
The strong uniform reparametrisation properties are satisfied in a number of
standard situations.

\begin{enumerate}
    \item Let \(X=\mathbb R^d\), $\rho$ be the Euclidean metric, and $\mu$ be the $d$-dimensional Lebesgue
    measure \(\mathcal L^d\). For any \(r>0\) and any \(r\)-separated set
    \(\Lambda\subset\mathbb R^d\), one may take
    \[
        Y=B(0,2r),\qquad \nu=\mathcal L^d|_{B(0,2r)},
        \qquad \phi_x(y)=y-x .
    \]
    Then \(\phi_x:B(x,2r)\to Y\) is an isometry and
    \((\phi_x)_*(\mathcal L^d|_{B(x,2r)})=\nu\). Thus \(w_x\equiv 1\),
    and the strong uniform reparametrisation property holds at all
    scales. The same argument applies to finite-dimensional normed
    spaces with Lebesgue measure.

    \item More generally, if \(X\) is a Carnot group equipped with a
    homogeneous distance and Haar measure, then left translation gives
    the required parametrisations. Namely, for \(Y=B(e,2r)\) and
    \[
        \phi_x(y)=x^{-1}y ,
    \]
    the maps \(\phi_x:B(x,2r)\to Y\) are isometries and preserve Haar
    measure. Hence again \(w_x\equiv 1\), and the strong uniform reparametrisation property holds at all scales.

    \item Let \(M\) be a (not necessarily closed) \(d\)-dimensional Riemannian manifold of
    bounded geometry, equipped with its geodesic distance and Riemannian
    volume measure. At sufficiently small scales, one may use normal
    coordinates centred at \(x\) to show that \(M\) satisfies the strong uniform reparametrisation property locally. In particular, any closed manifold \(M\) with smooth Riemannian metric (as considered in this article) satisfies the strong uniform reparametrisation property (see the proof of Proposition \ref{prop: manifold spectral cluster expectation} for details).

\end{enumerate}
\end{remark}

\begin{lemma}\label{lemma uniform bilipschitz}
The strong uniform reparametrisation property implies the uniform reparametrisation property.
\end{lemma}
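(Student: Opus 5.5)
We have to show: given $(Y,\delta,\nu)$ Ahlfors $d$-regular with $\nu(Y)\le C_Y r^d$ and bijections $\phi_x:B(x,2r)\to Y$ whose inverses are uniformly $C_L$-Lipschitz, the pushforwards $\mu_x=(\phi_x)_*(\mu|_{B(x,2r)})$ are absolutely continuous with respect to $\nu$ and have densities bounded by a constant independent of $x$. The finiteness of $\nu(Y)$ with the required upper bound is part of the hypothesis, so only the Radon--Nikodym statement needs proof.

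The plan is to compare $\mu_x$ and $\nu$ on small balls and then pass to general sets by a covering argument. Fix $x\in\Lambda$, $y\in Y$ and $s\in(0,1)$. Since $\phi_x^{-1}$ is $C_L$-Lipschitz,
\[
\phi_x^{-1}(B_Y(y,s))\subset B(\phi_x^{-1}(y),C_Ls).
\]
Upper Ahlfors regularity of $X$ then gives
\[
\mu_x(B_Y(y,s))\le C_A C_L^d s^d \qquad (C_Ls<1).
\]
Lower Ahlfors regularity of $\nu$ gives $\nu(B_Y(y,s))\ge C^{-1}s^d$. Hence
\[
\mu_x(B_Y(y,s))\le C\,\nu(B_Y(y,s))
\]
for all small balls, with $C$ depending only on $C_A,C_L$ and the regularity constant of $\nu$.

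To upgrade this to $\mu_x(A)\le C'\nu(A)$ for every measurable $A\subset Y$, I would use a Vitali-type $5r$-covering on the doubling space $Y$; Ahlfors regularity at small scales gives the doubling property there. Given $A$ and $\epsilon>0$, outer regularity of the Radon measure $\nu$ (available because $Y$ is separable metric and $\nu$ is finite) provides an open $U\supset A$ with $\nu(U)\le\nu(A)+\epsilon$. Cover $A$ by small balls contained in $U$, and extract a countable disjoint subfamily $\{B_i\}$ whose fivefold enlargements $\{5B_i\}$ still cover $A$. Then
\[
\mu_x(A)\le\sum_i\mu_x(5B_i)\lesssim\sum_i\nu(5B_i)\lesssim\sum_i\nu(B_i)\le\nu(U),
\]
using the ball comparison and doubling. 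Letting $\epsilon\to 0$ yields $\mu_x(A)\le C'\nu(A)$. This gives absolute continuity, and by Radon--Nikodym a density $w_x$ with $0\le w_x\le C'$ $\nu$-a.e., where $C'$ is uniform in $x$. The measure $\mu_x$ is $\sigma$-finite, indeed finite, since $\mu(B(x,2r))<\infty$. Measurability of $\phi_x$ and its inverse is assumed, so the pushforward is well defined.

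The main obstacle is the covering step: the Vitali lemma and outer regularity of $\nu$ must be justified in an abstract separable metric space. Ahlfors regularity is only assumed at radii below $1$, but small balls suffice for Vitali. If outer regularity is problematic, the fallback is to note that the ball inequality holds at every point at every small scale, so the upper density $\limsup_{s\to0}\mu_x(B(y,s))/\nu(B(y,s))\le C$ everywhere. Standard differentiation theory for doubling measures (Federer) then gives $\mu_x\le C\nu$ directly.
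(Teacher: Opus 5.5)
Your proof is correct and follows the paper's argument. Both use the small-ball comparison $\mu_x(B(y,s))\lesssim\nu(B(y,s))$, obtained from the Lipschitz bound on $\phi_x^{-1}$ and Ahlfors regularity, then the Vitali $5r$-covering and outer regularity of $\nu$. Your ending is cleaner than the paper's Step 4, though: you prove $\mu_x(A)\le C'\nu(A)$ for every measurable $A$ and read off $w_x\le C'$ from the Radon--Nikodym theorem. The paper's Step 4 instead tries to extract the pointwise bound from balls alone, and its inequality $\nu(E_{n,k})\le n\int_{B(y_0,k)}(w_x-C)\,\rd\nu$ is not justified, because $w_x-C$ can be negative on $B(y_0,k)\setminus E_{n,k}$. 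Your set-wise bound (with the restriction to radii below $1$ that you correctly noted) is what actually closes that step.
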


\begin{proof}
1. By the Lipschitz assumption, for any ball $B(z,t)$ in $Y$, we see
$\phi_x^{-1}(B(z,t))\subset B(\phi_x^{-1}(z),C_Lt)$. Thus, by definition of the pushforward measure, the Ahlfors regularity of $\mu$ and lower Ahlfors regularity of $\nu$,
\begin{align*}
  \mu_x(B(z,t))
  = \mu(\phi_x^{-1}(B(z,t)))
  \leq \mu(B(\phi_x^{-1}(z),C_Lt))
  \leq C_A(C_Lt)^d
  \leq C_A C_Y^2 C_L^d\nu(B(z,t)).
\end{align*}
2. Let $U\subset Y$ be an open set, and for each $y\in U$ choose $\delta_y\in (0,1)$ such that $B(y,\delta_y)\subset U$. By the $5r$-covering theorem (see, e.g., \cite[Theorem 2.1]{Mattila1995}), there exists a finite or countable sequence of disjoint balls $B_i=B(y_i,\delta_{y_i})$ such that the collection $\{5B_i\}$ covers $U$. Thus, by the first step,
\begin{align*}
    \mu_x(U)\leq \sum_i\mu_x(5B_i)
    \leq C_A C_Y^2 C_L^d \sum_i\nu(5B_i)
    \leq C_A^2 C_Y^2 (5C_L)^d \sum_i\nu(B_i)
    \leq C_A^2 C_Y^2 (5C_L)^d\nu(U).
\end{align*}
3. Let $E\subset Y$ be a measurable set with $\nu(E)=0$, and let $\epsilon>0$. By outer regularity of $\nu$, there exists an open set $U\subset Y$ such that $E\subset U$ and $\nu(U)\leq \epsilon$. By the second step, it follows that $\mu_x(U) \leq  C_A^2 C_Y^2 (5C_L)^d\epsilon$. Since $\epsilon$ was arbitrary, we have $\mu_x(E)=0$, so that $\mu_x\ll\nu$.\\

\noindent 
4. By the Radon-Nikodym theorem and the estimate in the first step, for any ball $B\subset Y$, we have
\begin{align*}
    \mu_x(B)
    = \int_Bw_x\rd\nu
    \leq C_A C_Y^2 C_L^d\nu(B)
    \implies \int_B (w_x- C_A C_Y^2 C_L^d)\rd\nu\leq 0.
\end{align*}
Consider the sets 
\begin{align*}
    E:=\{y\in Y:w_x- C_A C_Y^2 C_L^d>0\},\quad 
    E_n:=\{y\in Y:w_x-  C_A C_Y^2 C_L^d \geq 1/n\},
\end{align*}
and $E_{n,k}:=E_n\cap B(y_0,k)$ for some fixed $y_0\in Y$. Then $E_{n,k}\nearrow_{k\to\infty}
E_n$ and $E_n\nearrow_{n\to\infty} E$. Since
\begin{align*}
  0 \leq \nu(E_{n,k})
  \leq n\int_{B(y_0,k)} (w_x- C_A C_Y^2 C_L^d)\rd\nu\leq 0,
\end{align*}
it follows that $\nu(E_n)=\lim_{k\to\infty}\nu(E_{n,k})=0$ and $\nu(E)=\lim_{n\to\infty}\nu(E_n)=0$. Hence, $w_x \leq C_A C_Y^2 C_L^d$ almost everywhere.
\end{proof}

\begin{lemma}\label{lem: Fubini reparametrisation}
  Assume that $(X,\Lambda)$ satisfies the uniform reparametrisation property at scale $r$. Then for every $\mu$-integrable function $f:X\to\C$,
  \begin{align}\label{eq: change of variables formula}
    \sum_{x\in\Lambda} \int_{B(x,2r)} f \, d\mu
    = \int_{Y} \sum_{x\in\Lambda} f\circ \phi_x^{-1} \, w_x\, \rd\nu.
  \end{align}
\end{lemma}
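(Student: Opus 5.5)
The identity \eqref{eq: change of variables formula} is a change of variables carried out ball by ball, followed by an exchange of the finite sum with the integral. I would argue as follows.

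\emph{Single ball.} Fix $x\in\Lambda$. By definition of the pushforward, $\mu_x(A)=\mu(\phi_x^{-1}(A))$ for measurable $A\subset Y$. The abstract change-of-variables formula, checked on indicators, extended to simple functions, and then passed to the limit by monotone convergence, gives
\[
\int_{B(x,2r)} g\,\rd\mu=\int_Y g\circ\phi_x^{-1}\,\rd\mu_x
\]
for every nonnegative measurable $g$ on $B(x,2r)$. This uses that $\phi_x$ is a bimeasurable bijection, so $g\circ\phi_x^{-1}$ is measurable on $Y$ and $g=(g\circ\phi_x^{-1})\circ\phi_x$. Since $\mu_x\ll\nu$ with density $w_x$, we have $\int_Y h\,\rd\mu_x=\int_Y h\,w_x\,\rd\nu$ for nonnegative measurable $h$. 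This again follows from the indicator case and monotone convergence. Combining the two formulas with $g=f|_{B(x,2r)}$ yields
\[
\int_{B(x,2r)} f\,\rd\mu=\int_Y (f\circ\phi_x^{-1})\,w_x\,\rd\nu
\]
for $f\geq 0$.

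\emph{General $f$.} For complex-valued $f$, split it into real and imaginary parts and each of these into positive and negative parts. Integrability of $f$ over $B(x,2r)$ makes each of the four pieces finite, and by the nonnegative case so are the corresponding $\nu$-integrals. Linearity then gives the single-ball identity for $f$.

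\emph{Summation.} The set $\Lambda$ is $r$-separated in an Ahlfors regular space. In the intended applications $\Lambda$ is finite, as in Theorem~\ref{thm:abstract2_intro}, and then the sum over $\Lambda$ passes through the $\nu$-integral by linearity.

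If $\Lambda$ is countable, I would first treat $|f|$ and use Tonelli's theorem. By the overlap bound \eqref{eq: abstract overlap bound} with $M=2$,
\[
\sum_{x\in\Lambda}\int_{B(x,2r)}|f|\,\rd\mu\le C_A^2 5^d\|f\|_{L^1(X)}<\infty.
\]
Hence $\sum_{x\in\Lambda}|f\circ\phi_x^{-1}|\,w_x$ is $\nu$-integrable, and the dominated convergence (Fubini) theorem allows exchanging the sum and the integral for $f$ itself.

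\emph{Main obstacle.} The argument is routine; the only point needing care is measurability and integrability. Specifically, one must check that $f\circ\phi_x^{-1}$ is measurable, which follows from bimeasurability of $\phi_x$, and that the sum over a possibly infinite $\Lambda$ converges absolutely, which the bounded-overlap estimate supplies.
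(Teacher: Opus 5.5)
Your proposal is correct and follows essentially the same route as the paper. You do a ball-by-ball change of variables through the pushforward and the Radon--Nikodym density $w_x$, then exchange sum and integral by applying the same identity to $|f|$ together with the overlap bound \eqref{eq: abstract overlap bound} with $M=2$, which gives exactly the paper's constant $C_A^2 5^d$; your extra measure-theoretic detail (indicators, simple functions, monotone limits, and splitting complex $f$ into four nonnegative pieces) only spells out what the paper leaves implicit.
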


\begin{proof}
  This follows from the change-of-variables formula, the Radon–Nikodym theorem and Fubini's theorem. More precisely, by change-of-variables (definition of pushforward) and the Radon–Nikodym theorem, we have the identity
  \[
    \int_{B(x,2r)} f\,\rd\mu
    = \int_{Y} f\circ \phi_x^{-1} \, w_x\, \rd\nu.
  \]
  To justify Fubini's theorem, we apply the same argument to $|f|$, but in the reverse direction,
  \[
    \sum_{x\in\Lambda}\int_{Y} |f|\circ \phi_x^{-1} \, w_x\, \rd\nu
    = \sum_{x\in\Lambda}\int_{B(x,2r)} |f|\,\rd\mu\leq C_A^2 5^d\|f\|_{L^1(X)},
  \]
  where we used the finite overlap bound~\eqref{eq: abstract overlap bound}.
\end{proof}

\begin{corollary}
Assume that $(X,\Lambda)$ satisfies the uniform reparametrisation property at scale $r$ and that $\Lambda$ is maximal $r$-separated. Then for every $\mu$-integrable function $f:X\to\C$,
  \begin{align}\label{eq: change of variables formula 2}
    \int_X f\rd\mu=\sum_{x\in \Lambda}\int_{\mathcal{V}_x}f\rd\mu
    = \sum_{x\in \Lambda}\int_{B(x,2r)}f\mathbf{1}_{\mathcal{V}_x}\rd\mu
    = \int_{Y} \sum_{x\in\Lambda} (f \mathbf{1}_{\mathcal{V}_x})\circ \phi_x^{-1} \, w_x\, d\nu.
  \end{align}
\end{corollary}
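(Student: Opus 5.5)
We prove the three equalities in \eqref{eq: change of variables formula 2} one at a time, from left to right.

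\emph{First equality.} This is the decomposition stated just after the definition of Voronoi cells. The cells $\mathcal{V}_x$, $x\in\Lambda$, are pairwise disjoint. Their union covers $X$ up to a $\mu$-null set, the boundary set where the nearest point of $\Lambda$ is not unique. Since $f$ is integrable, countable additivity of the integral gives $\int_X f\,\rd\mu=\sum_{x}\int_{\mathcal{V}_x}f\,\rd\mu$. Here $\Lambda$ is countable; it is finite in our applications. In general, the sum converges absolutely by dominated convergence, because $\sum_x\int_{\mathcal{V}_x}|f|\,\rd\mu\le\|f\|_{L^1}$. The null-set claim is already asserted in the text preceding the corollary, and we take it as given.

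\emph{Second equality.} By Lemma~\ref{lemma: radius Voronoi}, $\mathcal{V}_x\subset B(x,2r)$, so $\mathbf{1}_{\mathcal{V}_x}=\mathbf{1}_{B(x,2r)}\mathbf{1}_{\mathcal{V}_x}$. Hence $\int_{\mathcal{V}_x}f\,\rd\mu=\int_{B(x,2r)}f\mathbf{1}_{\mathcal{V}_x}\,\rd\mu$ for each $x\in\Lambda$. This step is where maximality of $\Lambda$ is used.

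\emph{Third equality.} We do not apply Lemma~\ref{lem: Fubini reparametrisation} verbatim, since the integrand $f\mathbf{1}_{\mathcal{V}_x}$ depends on $x$. Instead, we repeat its proof termwise with $f_x:=f\mathbf{1}_{\mathcal{V}_x}$, which is $\mu$-integrable and measurable. The definition of the pushforward and the Radon--Nikodym theorem give, for each $x$,
\[
\int_{B(x,2r)}f_x\,\rd\mu=\int_Y f_x\circ\phi_x^{-1}\,w_x\,\rd\nu .
\]
To interchange the sum over $x$ with the integral over $Y$, we apply Fubini--Tonelli with counting measure on $\Lambda$. The justification is the absolute bound
\[
\sum_x\int_Y|f_x|\circ\phi_x^{-1}w_x\,\rd\nu=\sum_x\int_{\mathcal{V}_x}|f|\,\rd\mu\le\|f\|_{L^1(X)}<\infty,
\]
which follows from disjointness of the cells. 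This is even better than the overlap bound \eqref{eq: abstract overlap bound} used in Lemma~\ref{lem: Fubini reparametrisation}.

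The only point requiring any care is this last step: the sum and the integral can be exchanged only because the absolute series is controlled. Disjointness of the $\mathcal{V}_x$ supplies that control directly. Apart from this, the corollary is routine.
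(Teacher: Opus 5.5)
Your argument is the paper's: the Voronoi decomposition gives the first equality, $\mathcal{V}_x\subset B(x,2r)$ from Lemma~\ref{lemma: radius Voronoi} gives the second, and the change of variables behind Lemma~\ref{lem: Fubini reparametrisation} gives the third; you are right that the lemma has to be re-run termwise on $f\mathbf{1}_{\mathcal{V}_x}$ rather than quoted verbatim, and your disjointness bound justifies the use of Fubini. The one input you (like the paper) take on faith, that the cells cover $X$ up to a $\mu$-null set, fails in some Ahlfors regular spaces satisfying the hypotheses---for instance $\R^2$ with the $\ell^\infty$ metric and Lebesgue measure (a normed space, as allowed in Remark~\ref{rem:examples reparametrisation}), with $r=1$ and $\Lambda=1.5\Z\times 2\Z$, where every point at distance less than $0.05$ from $(0.75,0.9)$ is equidistant from $(0,0)$ and $(1.5,0)$, strictly farther from the rest of $\Lambda$, and hence in no cell---so this step needs either $\mu$-null bisectors (true on Riemannian manifolds, the only case the paper uses) or a measurable tie-breaking rule in the definition of the cells, after which your proof goes through unchanged.
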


\begin{proof}
    The first equality follows from maximality, the second from $\mathcal{V}_x\subset B(x,2r)$ (see Lemma \ref{lemma: radius Voronoi}), and the third from Lemma \ref{lem: Fubini reparametrisation}.
\end{proof}

\begin{theorem}\label{thm:abstract2}
Let $X$ be an Ahlfors $d$-regular metric measure space, let $2\leq q\leq\infty$ and $1\leq p\leq 2\leq p'\leq \infty$ satisfy $\frac{1}{q}=\frac{1}{p}-\frac{1}{p'}$. Let
\[
T_i:\mathcal{H}_i\to L^\infty(X)\cap L^{p'}(X), \qquad i=1,2,
\]
be bounded linear operators, where $\mathcal{H}_i$ are complex separable Hilbert spaces. Assume that $\Ran(T_1)\cup\Ran (T_2)$ has the weak local constancy property at scale $r$.
  Let $\Lambda$ be a finite subset of $X$, let $V\in L^{1}_{\rm loc}(X)$ be a measurable complex-valued function, and let
  \begin{align}
  \label{eq:defrandomisation}
    V_{\omega}(y):=\omega_xV(y)\quad\mbox{for } x\in \Lambda,y\in \mathcal{V}_x,
  \end{align}
  where $\omega_x$ are i.i.d.~symmetric Bernoulli or centred normalised Gaussian random variables. 
  \begin{enumerate}
      \item Assume that $\Lambda$ is maximal $r$-separated and that $(X,\Lambda)$ has the uniform re\-pa\-ra\-me\-tri\-sa\-tion property at scale $r$. Then we have
  \begin{align*}
\mathbf{E}\|T_1^*V_{\omega}T_2\|_{\mathcal{H}_2\to \mathcal{H}_1}
    &\lesssim r^{d/p} (\log\#\Lambda)^{5/2}\|T_1\|_{\mathcal{H}_1\to L^{p'}(X)}\|T_2\|_{\mathcal{H}_2\to L^{\infty}(X)}\\
    &\qquad \times\Big(\sum_{x\in\Lambda}\|V\|_{L^{\infty}(B(x,2r))}^{2q}\Big)^{1/(2q)}.
  \end{align*}    
  \item Assume that $X$ has the uniform reparametrisation property at scales $\geq r$ and that $\Lambda$ is maximal $r'$-separated for some $r'\geq r$. Then we have
  \begin{align*}
\mathbf{E}\|T_1^*V_{\omega}T_2\|_{\mathcal{H}_2\to \mathcal{H}_1}
    &\lesssim r^{d/p} (r'/r)^{\frac{d}{p}-\frac{d}{2q}}(\log\#\Lambda)^{5/2}\|T_1\|_{\mathcal{H}_1\to L^{p'}(X)}\|T_2\|_{\mathcal{H}_2\to L^{\infty}(X)}\\
    &\qquad \times\Big(\sum_{x\in \Lambda}\|V\|_{L^{\infty}(B(x,2r'))}^{2q}\Big)^{1/2q}.
  \end{align*}    
   \end{enumerate}
 The above inequalities are understood to hold with the obvious modification for $q=\infty$.
\end{theorem}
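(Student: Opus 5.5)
Part (1) is proved by reducing to the discrete estimate, Theorem~\ref{thm:abstract1_intro}. The reduction goes through the reparametrisation formula \eqref{eq: change of variables formula 2}. For $g_i\in\mathcal H_i$ of norm one we write
\[
\langle T_1g_1,V_\omega T_2g_2\rangle=\int_Y\sum_{x\in\Lambda}\omega_x\,\big(\overline{T_1g_1}\,V\,T_2g_2\,\mathbf 1_{\mathcal V_x}\big)\circ\phi_x^{-1}\,w_x\,\rd\nu .
\]
The first step is to freeze $y\in Y$. The inner sum is then a discrete bilinear form on $E:=\Lambda$, with each $E_j$ a singleton, the sequence $v(x):=V(\phi_x^{-1}y)\mathbf 1_{\mathcal V_x}(\phi_x^{-1}y)w_x(y)$, and the sampling operators $S_i^{y}g:=(T_ig(\phi_x^{-1}y))_{x\in\Lambda}$. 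The randomisation is one coin per point of $\Lambda$, so it is exactly \eqref{def:v_omega discrte}.

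We cannot simply integrate Theorem~\ref{thm:abstract1_intro} in $y$, because the supremum over $g_1,g_2$ sits outside the integral. Instead we rerun its proof with an enlarged entropy set. The points $\{\phi_x^{-1}(y)\}_{x\in\Lambda}$ are not $r$-separated for fixed $y$, so the plan is to work on the product set $\tilde E:=\Lambda\times Y_0$. Here $Y_0$ is a finite set: we discretise $Y$ (after the finite-dimensional reduction of step 2 in the discrete proof, which lets us approximate the integral by Riemann-type sums with weights) and replace $\int_Y\rd\nu$ by a finite weighted sum. We then view $S_i:\mathcal H_i\to\ell^\infty(\tilde E)$ and the multiplier $v(x,y)$, randomised over the partition $E_x=\{x\}\times Y_0$. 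Theorem~\ref{thm:abstract1_intro} then gives the bound with $(\log\#\tilde E)^{5/2}$, with $\|S_2\|_{\to\ell^\infty}\le\|T_2\|_{\to L^\infty}$, and with $\big(\sum_x\|v\|_{\ell^p(E_x)}^{2q}\big)^{1/2q}$. Two issues need handling here.

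The first issue is the logarithm. It must be $\log\#\Lambda$ rather than $\log\#Y_0$, and avoiding dependence on the discretisation is the main obstacle. The first thing I would try is to note that in Proposition~\ref{prop. abstract chaining} only $\log$ of the $\ell^\infty$ covering numbers enters. I would redo the chaining with the $\ell^\infty(\tilde E)$ norm controlled through local constancy by samples on an $r$-net, whose cardinality is $\lesssim\#\Lambda$ by \eqref{eq: abstract overlap bound}. The cruder alternative is to accept $\log\#Y_0$ and choose $Y_0$ of size polynomial in $\#\Lambda$.

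The second issue is the norm bookkeeping, which uses weighted Hölder. With weights $\nu$ and $w_x\le C_{\rm RN}$, the quantity $\|S_1\|_{\to\ell^{p'}_\nu}$ is at most $\|T_1\|_{L^{p'}}$ by \eqref{eq: change of variables formula 2}. For the multiplier, $\|v\|_{L^p(E_x,\nu)}\lesssim \nu(Y)^{1/p}\|V\|_{L^\infty(B(x,2r))}\lesssim r^{d/p}\|V\|_{L^\infty(B(x,2r))}$. This yields exactly the factor $r^{d/p}$. Note that weak local constancy is only needed for the entropy step and the sampling comparison of Proposition~\ref{cor. abstract comparison ellp Lp}.

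Part (2) follows by applying part (1) at scale $r'$. Local constancy at scale $r$ does not transfer upward, so the entropy argument still uses $r$-nets. Each $r'$-cell contains $\sim(r'/r)^d$ cells at scale $r$. In the Hölder step $\|v\|_{\ell^p}\le (\#)^{1/p-1/2q}\|v\|_{\ell^{2q}}$ of Remark~\ref{rem:Holder}, with constant multipliers on each $r'$-cell, this produces the factor $(r'/r)^{d/p-d/2q}$. Together with $r^{d/p}$ it gives the stated bound. The case $q=\infty$ is the same argument with sup norms.
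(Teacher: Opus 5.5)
There is a genuine gap in part (1), and it sits where you place the ``main obstacle'': the logarithm. Passing to the product set $\tilde E=\Lambda\times Y_0$ and applying Theorem~\ref{thm:abstract1_intro} there gives $(\log(\#\Lambda\cdot\#Y_0))^{5/2}$. Neither repair you suggest is available.

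\emph{First repair.} Weak local constancy only bounds samples on $r$-separated sets \emph{from above} by $L^s$ norms. It gives no control of $\|\cdot\|_{\ell^\infty(\Lambda\times Y_0)}$, or of the $\ell^\infty$ entropy in Proposition~\ref{prop. abstract chaining}, by samples on an $r$-net. That would need a reverse, Bernstein-type inequality, which is not assumed.

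\emph{Second repair.} $V$ is merely measurable and $T_ig$ merely lies in $L^\infty\cap L^{p'}$, so there is no quantitative discretisation of $\int_Y\rd\nu$ with $\#Y_0\le(\#\Lambda)^C$. The simple-function approximation available after the finite-dimensional reduction uses a partition that depends on the functions and the accuracy, not on $\#\Lambda$.

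\emph{The missing observation} is that no detour is needed: the supremum over $g_1,g_2$ can be moved inside the $y$-integral. With $S_{i,y}$ your sampling operators $S_i^y$ and $v_{\omega,y}(x)=\omega_x v(x)$ your frozen multiplier, for unit vectors
\[
|\langle T_1g_1,V_\omega T_2g_2\rangle|\le\int_Y\Big|\sum_{x\in\Lambda}\overline{(S_{1,y}g_1)(x)}\,(S_{2,y}g_2)(x)\,v_{\omega,y}(x)\Big|\,\rd\nu(y)\le\int_Y\|S_{1,y}^*v_{\omega,y}S_{2,y}\|_{\mathcal H_2\to\mathcal H_1}\,\rd\nu(y).
\]
Take expectations (Tonelli) and apply Theorem~\ref{thm:abstract1_intro} at each fixed $y$, with $E=\Lambda$ and singleton cells. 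This gives $(\log\#\Lambda)^{5/2}$ directly and is exactly the paper's proof. The remaining bookkeeping uses
\begin{itemize}
\item $\|S_{2,y}\|_{\mathcal H_2\to\ell^\infty}\le\|T_2\|_{\mathcal H_2\to L^\infty}$,
\item $|v_{\omega,y}(x)|\le C_{\rm RN}\|V\|_{L^\infty(B(x,2r))}$,
\item $\nu(Y)\le C_Yr^d$,
\item $\sup_y\|S_{1,y}\|_{\mathcal H_1\to\ell^{p'}(\Lambda)}\lesssim r^{-d/p'}\|T_1\|_{\mathcal H_1\to L^{p'}}$.
\end{itemize}
These combine to $r^{d-d/p'}=r^{d/p}$.

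Your worry that $\{\phi_x^{-1}(y)\}_{x\in\Lambda}$ need not be $r$-separated is legitimate; the paper passes over it. It is cured cheaply. By the bounded overlap \eqref{eq: abstract overlap bound} and a greedy colouring, $\Lambda$ splits into $O(1)$ subfamilies that are $5r$-separated. Since $\phi_x^{-1}(y)\in B(x,2r)$, the sampled points within each subfamily are $r$-separated, so Proposition~\ref{cor. abstract comparison ellp Lp} applies to each.

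For part (2), your plan (an $r$-net inside each $r'$-cell, group randomisation, Remark~\ref{rem:Holder} with group size $\lesssim(r'/r)^d$) is the paper's sketch. However, the exponent does not come out as you claim. Remark~\ref{rem:Holder} yields $(r'/r)^{d/p-d/2q}\|v\|_{\ell^{2q}}$ with the $\ell^{2q}$ norm taken over the \emph{fine} net. With $v$ of size $\|V\|_{L^\infty(B(x,2r'))}$ at each of the $\sim(r'/r)^d$ fine points of the $x$-th group, this norm is $\approx(r'/r)^{d/2q}\big(\sum_{x\in\Lambda}\|V\|_{L^\infty(B(x,2r'))}^{2q}\big)^{1/2q}$. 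So the argument gives $(r'/r)^{d/p}$, i.e.\ $r'^{d/p}$ overall, unless the right-hand side is kept as a sum over the fine net.

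The displayed exponent in fact cannot hold for $q<\infty$. On a flat torus take $\mathcal H_i=\C$ and $T_ic=cf$, with $0\le f\le1$, $f=1$ on $B(x_0,r'/4)$ and $\supp f\subset B(x_0,r'/2)\subset\mathcal V_{x_0}$; take $V=\mathbf 1_{\mathcal V_{x_0}}$. Then:
\begin{itemize}
\item weak local constancy at scale $r\le r'$ holds uniformly in $r$;
\item the left-hand side equals $\mathbf E|\omega_{x_0}|\,\|f\|_{L^2}^2\gtrsim r'^d$;
\item the right-hand side is $\lesssim r'^d(r/r')^{d/2q}(\log\#\Lambda)^{5/2}$, which tends to $0$ as $r\to0$ with $r'$ and $\Lambda$ fixed.
\end{itemize}
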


\begin{proof}
(1) For fixed $y\in Y$, we define
  \begin{align}\label{eq: def Siy}
    S_{i,y}:\mathcal{H}_i\to \ell^{\infty}(\Lambda),\quad (S_{i,y}g)(x):=(T_ig)(\phi_x^{-1}(y)),
  \end{align}
  and $v_{\omega,y}(x):=(V_{\omega}\mathbf{1}_{\mathcal{V}_x})(\phi_x^{-1}(y))w_x(y)$ for $x\in\Lambda$.
  Let $g_1\in \mathcal{H}_1,g_2\in \mathcal{H}_2$. By \eqref{eq: change of variables formula 2}, we can write
  \begin{align*}
    \int_X \overline{T_1g_1}T_2g_2V_{\omega}\rd\mu
    = \int_Y\sum_{x\in\Lambda}\overline{(S_{1,y}g_1)(x)}(S_{2,y}g_2)(x)v_{\omega,y}(x)\rd\nu(y). 
  \end{align*}
  Taking the modulus and then the supremum over all unit vectors $g_1,g_2$, it follows that
  \begin{align*}
    \|T_1^*V_{\omega}T_2\|_{\mathcal{H}_2\to \mathcal{H}_1}\leq \int_Y \|S_{1,y}^*v_{\omega,y}S_{2,y}\|_{\mathcal{H}_2\to \mathcal{H}_1}   \rd\nu(y),
  \end{align*}
  and taking expectation,
  \begin{align*}
    \mathbf{E}\|T_1^*V_{\omega}T_2\|_{\mathcal{H}_2\to \mathcal{H}_1}\leq \int_Y \mathbf{E}\|S_{1,y}^*v_{\omega,y}S_{2,y}\|_{\mathcal{H}_2\to \mathcal{H}_1}   \rd\nu(y).
  \end{align*}
  The claimed bound now follows from Theorem~\ref{thm:abstract1_intro}, 
  Proposition~\ref{cor. abstract comparison ellp Lp} and the assumption $\nu(Y)\leq C_Yr^d$. More precisely, we have
  \begin{align*}
    & \int_Y \mathbf{E}\|S_{1,y}^*v_{\omega,y}S_{2,y}\|_{\mathcal{H}_2\to \mathcal{H}_1}   \rd\nu(y)
    \lesssim (\log\#\Lambda)^{5/2}\sup_{y\in Y}\|S_{1,y}\|_{\mathcal{H}_1\to \ell^{p'}(\Lambda)} \|S_{2,y}\|_{\mathcal{H}_2\to \ell^{\infty}(\Lambda)} \\
    & \times \int_Y \Big(\sum_{x\in\Lambda}|V\one_{\mathcal{V}_x}(\phi_x^{-1}(y))w_x(y)|^{2q}\Big)^{1/2q}\rd\nu(y) \\
    & \lesssim r^{-d/p'} r^{d}(\log\#\Lambda)^{5/2}\|T_1\|_{\mathcal{H}_1\to L^{p'}(X)}\|T_2\|_{\mathcal{H}_2\to L^{\infty}(X)}\Big(\sum_{x\in\Lambda}\|V\|_{L^{\infty}(B(x,2r))}^{2q}\Big)^{1/2q}.
  \end{align*}

(2) For each $x\in\Lambda$, let $\Lambda_r(x)$ be a maximal $r$-separated subset of $\mathcal{V}_x$.
  Let $$\Lambda_r:=\bigcup_{x\in\Lambda}\Lambda_r(x).$$
  By assumption, $(X,\Lambda_r)$ satisfies the uniform reparametrisation property.
  For fixed $y\in Y$, define $S_{i,y}$ as in \eqref{eq: def Siy}
  and $v_{\omega,y}(x):=(V_{\omega}\mathbf{1}_{\mathcal{V}_r(x)})(\phi_x^{-1}(y))w_x(y)$ for $x\in\Lambda_r$, where
     \[
    \mathcal{V}_r(x) := \{y\in X:\rho(x,y)<\rho(x',y)\mbox{ for all }x'\in\Lambda_r\setminus\{x\}\}.
\]
 The remainder of the proof is analogous to part (1); in addition, one uses H\"older and the bound $\#\Lambda_r(x)\lesssim (r'/r)^d$ (see Remark \ref{rem:Holder}).
\end{proof}

\subsection{A sufficient condition for the weak local constancy property}

\section{Compact manifolds}
We start this section with an application of Theorem \ref{thm:abstract2} to a dual form of Sogge's spectral projection bounds. The result will not be used directly to prove Theorem~\ref{thm:main_intro} but is perhaps of independent interest. We then turn to resolvent estimates, which are similar to those in \cite{Cuenin2026}, but for the square root of the resolvent instead of the full resolvent. We then use these estimates, together with Theorem \ref{thm:abstract2}, to prove a Birman--Schwinger bound involving Anderson-type potentials. The proof of Theorem \ref{thm:main_intro} is then a straightforward consequence of this bound.

\subsection{Spectral projections}
Consider the spectral projection operators
\begin{align}
  \Pi_{\lambda} 
  := \mathbf{1}\left(P \in [\lambda,\lambda+1]\right):L^2(M)\to L^{p'}(M)
\end{align}
where $P=\sqrt{-\Delta_g}$. By Sogge's bounds \cite{Sogge1988},
\begin{align}\label{eq: Sogge bound large p'}
  \|\Pi_{\lambda} \|_{L^2(M)\to  L^{p'}(M)}\lesssim \lambda^{\nu(p')}. 
\end{align}
where
\begin{align}\label{def. nu(p')}
  \nu(p') :=
  \begin{cases}
    \frac{d-1}{2}(\frac{1}{2}-\frac{1}{p'})\quad & 2\leq p'\leq \frac{2(d+1)}{d-1},\\
    d(\frac{1}{2}-\frac{1}{p'})-\frac{1}{2}\quad & \frac{2(d+1)}{d-1}\leq  p'\leq \infty.
  \end{cases}   
\end{align}

Here and in what follows, we always assume that $1\leq p\leq 2\leq p'\leq \infty$, 
Given $1\leq q\leq\infty$, we also adopt the convention that $p$ is defined by
\begin{align}\label{eq:1/q=1/p-1/p'}
\frac{1}{q}=\frac{1}{p}-\frac{1}{p'}.
\end{align}
More explicitly, this means that
\[
p=\frac{2q}{q+1},\quad p'=\frac{2q}{q-1}.
\]
We also set
\[
q_c:=\frac{d+1}{2}.
\]
Using this identification in \eqref{def. nu(p')}, we can write 
\begin{align}\label{sigma(q)}
  \nu(p')=\sigma(q):=\begin{cases}
        \frac{d}{2q}-\frac{1}{2}\quad & 1\leq  q\leq q_c,\\
        \frac{d-1}{4q}\quad & q_c\leq q\leq\infty. 
    \end{cases}   
\end{align}

Then, by \eqref{eq: Sogge bound large p'} and H\"older's inequality,
\begin{align}\label{eq: dual Sogge bound large p'}
  \|\Pi_{\lambda} ^*V\Pi_{\lambda} \|_{L^2(M)\to L^2(M)}
  \lesssim \lambda^{2\sigma(q)}\|V\|_{L^q(M)},\quad V\in L^q(M).
\end{align}
Conversely, \eqref{eq: dual Sogge bound large p'} implies \eqref{eq: Sogge bound large p'} by duality.

The following proposition yields an improvement over \eqref{eq: dual Sogge bound large p'} for random potentials.  

\begin{proposition}\label{prop: manifold spectral cluster expectation}
  Let $\lambda\geq 2$ and 
  $$
  r\leq \min\Big(\frac{\mathrm{inj}(M)}{4},\lambda^{-1}\Big)\leq R\leq \diam(M).
  $$
  Assume that $V\in L^1_{\rm \loc}(M)$ and that its support has diameter at most $R$. Let $\Lambda\subset M$ be a maximal $r$-separated set and let $\{\mathcal{V}_x\}_{x\in\Lambda}$ be the associated Voronoi cells. Let
  \begin{align*}
    V_{\omega}(y):=\omega_xV(y)\quad\mbox{for }x\in \Lambda,y\in \mathcal{V}_x,
  \end{align*}
  where $\omega_j$ are i.i.d.~symmetric Bernoulli or centred normalised Gaussian random variables. Then, for $1\leq q\leq\infty$, we have
  \begin{align*}
    \mathbf{E}\|\Pi_{\lambda} ^*V_{\omega}\Pi_{\lambda} \|_{L^2(M)\to L^2(M)}
    & \lesssim \lambda^{-\mu(q)}(\log R/r )^{5/2}\Big(\sum_{x\in\Lambda}\|V\|_{L^{\infty}(B(x,2r))}^{2q}\Big)^{1/(2q)},
  \end{align*}    
  with the obvious modification for $q=\infty$. Here, $\mu(q)$ is given by \eqref{def.mu(q)}
In particular, for any $K\geq 1$, we have
  \begin{align*}
    \|\Pi_{\lambda} ^*V_{\omega}\Pi_{\lambda} \|_{L^2(M)\to L^2(M)}
    & \lesssim K \lambda^{-\mu(q)}(\log R/r )^{5/2}\Big(\sum_{x\in\Lambda}\|V\|_{L^{\infty}(B(x,2r))}^{2q}\Big)^{1/(2q)}
  \end{align*} 
 with probability at least $1-\exp(-K^2)$. 
\end{proposition}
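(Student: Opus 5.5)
The plan is to apply Theorem~\ref{thm:abstract2}(1) with $X=M$, equipped with the geodesic distance and the Riemannian volume, $\mathcal H_1=\mathcal H_2=L^2(M)$ and $T_1=T_2=\Pi_\lambda$. The exponent $p'=\frac{2q}{q-1}$ is chosen so that $\frac1q=\frac1p-\frac1{p'}$.

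Three hypotheses need to be checked. First, $M$ is Ahlfors $d$-regular at scales below $\mathrm{inj}(M)$. Second, since $r\le \mathrm{inj}(M)/4$, normal coordinates $\phi_x=\exp_x^{-1}$ map $B(x,2r)$ onto a Euclidean ball $Y=B(0,2r)$. Compactness gives uniform Lipschitz bounds for $\phi_x^{-1}$ in $x$, so the strong uniform reparametrisation property holds; Lemma~\ref{lemma uniform bilipschitz} then gives the weak version needed. Third, $\Ran\Pi_\lambda$ has the local constancy property at scale $\lambda^{-1}$, which is verified in Appendix~\ref{s:localconstancy}. By Remark~\ref{rem:locallyconstant}(1) it then holds at every scale $r\le\lambda^{-1}$, and Proposition~\ref{prop. abstract comparison ellp Lp} upgrades it to the weak local constancy property. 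Since $\Pi_\lambda$ has finite rank, the boundedness into $L^\infty\cap L^{p'}$ is automatic.

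Inserting Sogge's bounds \eqref{eq: Sogge bound large p'} gives $\|\Pi_\lambda\|_{L^2\to L^{p'}}\lesssim\lambda^{\sigma(q)}$ and $\|\Pi_\lambda\|_{L^2\to L^\infty}\lesssim\lambda^{\frac{d-1}{2}}$. Together with $r^{d/p}\le\lambda^{-d/p}=\lambda^{-\frac{d(q+1)}{2q}}$, the prefactor becomes
\[
\lambda^{-\frac{d(q+1)}{2q}+\sigma(q)+\frac{d-1}{2}}.
\]
For $q\le q_c$ this exponent equals $-\frac d2-\frac d{2q}+\frac d{2q}-\frac12+\frac{d-1}2=-1$. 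For $q\ge q_c$ it equals $-\frac12-\frac{d+1}{4q}$. In both cases it is exactly $-\mu(q)$ from \eqref{def.mu(q)}.

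The logarithm requires extra care. Theorem~\ref{thm:abstract2} produces $\log\#\Lambda\sim d\log(1/r)$, which is too large, whereas we want $\log(R/r)$. I would rerun the proof of Theorem~\ref{thm:abstract2}(1) rather than quote it. For fixed $y\in Y$, the function $v_{\omega,y}$ vanishes at every $x$ whose Voronoi cell misses $\supp V$. The discrete Theorem~\ref{thm:abstract1_intro} can therefore be applied with $E$ replaced by $\Lambda_V:=\{x\in\Lambda:\mathcal V_x\cap\supp V\neq\emptyset\}$, restricting $S_{i,y}$ to this set; this only decreases their norms. By Lemma~\ref{lemma: radius Voronoi}, $\Lambda_V$ lies in a ball of radius $R+2r\lesssim R$. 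The disjoint balls $B(x,r/2)$ together with Ahlfors regularity (as in \eqref{eq: abstract overlap bound}) then give $\#\Lambda_V\lesssim (R/r)^d$, hence $\log\#\Lambda_V\lesssim\log(R/r)$, using $R\ge r$. Some care is needed when $R$ exceeds the Ahlfors range; there I would use compactness, so that $\#\Lambda_V\le\#\Lambda\lesssim r^{-d}$ with $R\sim\diam M$. The range $1\le q<2$ also needs a remark, since Theorem~\ref{thm:abstract1_intro} is stated for $q\ge2$. I expect its proof goes through verbatim for $q\ge1$, because only $p\le2\le p'$ and Hölder's inequality are used. The high-probability statement then follows directly from Lemma~\ref{lem:tailbound} applied to the operator-valued Rademacher/Gaussian variable.

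I expect the main obstacle to be the geometric verification: uniform local constancy of $\Ran\Pi_\lambda$ at scale $\lambda^{-1}$ (deferred to the appendix) and uniformity of the normal-coordinate reparametrisation. The localisation of $\log\#\Lambda$ to $\log(R/r)$ is the other nonroutine point.
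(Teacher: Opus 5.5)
Your proposal is correct and follows the paper's proof. Like the paper, you apply Theorem~\ref{thm:abstract2}(1) with $X=M$, $Y=B(0,2r)$, $\phi_x=\exp_x^{-1}$ and $T_1=T_2=\Pi_\lambda$, use Sogge's bounds to get the exponent $-d/p+\nu(p')+\nu(\infty)=-\mu(q)$, take (weak) local constancy from the appendix, and obtain the tail bound from Lemma~\ref{lem:tailbound}. Your two extra remarks fill in points that the paper's one-line citation of Theorem~\ref{thm:abstract2} passes over: restricting the discrete estimate to $\Lambda_V=\{x\in\Lambda:\mathcal V_x\cap\supp V\neq\emptyset\}$ gives the stated $\log(R/r)$ instead of $\log\#\Lambda\approx\log(1/r)$, and the H\"older step in Theorem~\ref{thm:abstract1_intro} indeed only needs $q\geq 1$.
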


\begin{proof}
  The first statement follows from Theorem \ref{thm:abstract2} (1) together with \eqref{eq: Sogge bound large p'} by taking
  \begin{itemize}
  \item $X=M$ with Riemannian volume measure and the metric space structure induced by the Riemannian metric,
  \item $Y=B(0,2r)\subset \R^d$ with $d$-dimensional Lebesgue measure and $\phi_x:=\exp_x^{-1}$,
  \item $\mathcal{H}_1=\mathcal{H}_2=L^2(M)$ and $T_1=T_2=\Pi_{\lambda}$, 
  \end{itemize}
  and observing that, due to \eqref{eq:1/q=1/p-1/p'}, \eqref{sigma(q)}, 
\[
-d/p+\nu(p')+\nu(\infty)
=
-\mu(q).
\]
  By definition of the injectivity radius, 
  $$
  \exp_x:B(0,\mathrm{inj}(M))\to B(x,\mathrm{inj}(M))
  $$
  is a diffeomorphism for every $x\in M$. Since $4r\leq \mathrm{inj}(M)$, it follows by compactness of $M$ that
  \begin{align*}
    \sup_{x\in M}\|\rd\exp_x^{-1}|_{B(x,2r)}\|<\infty.
  \end{align*}
  Hence, by Lemma \ref{lemma uniform bilipschitz}, Definition \ref{asspt: uniform Radon-Nikodym} is satisfied. The assumption \eqref{eq:abstract comparison ellp Lp} is verified in Lemma~\ref{verificationlocalconstancyspectralcluster2}.
The second statement follows from Lemma \ref{lem:tailbound}.
\end{proof}

\begin{remark}\label{rem:rand.of_fixed_V}
    The class of potentials covered by Proposition \ref{prop: manifold spectral cluster expectation} contains randomisations of any fixed $V$ with finite $\ell^{2q}L^{\infty}$ norm (as in \eqref{eq:defrandomisation}), not just Anderson-type potentials of the form \eqref{eq:Anderson-type}. In the latter case, taking $r=\lambda^{-1}$, $R=\diam(M)$, we get
      \begin{align*}
    \mathbf{E}\|\Pi_{\lambda} ^*V_{\omega}\Pi_{\lambda} \|_{L^2(M)\to L^2(M)}
     \lesssim \lambda^{-\mu(q)}(\log \lambda )^{5/2}\|v(\lambda)\|_{\ell^{2q}}.
  \end{align*}    
In comparison, e.g. in the Rademacher case, the deterministic bound \eqref{eq: dual Sogge bound large p'} yields 
\begin{align*}
\|\Pi_{\lambda} ^*V_{\omega}\Pi_{\lambda} \|_{L^2(M)\to L^2(M)}
     \lesssim \lambda^{2\sigma(2q)-\frac{d}{2q}}\|v(\lambda)\|_{\ell^{2q}}.   
\end{align*}
Since
\begin{align}\label{eq:mu(q)+2sigma(2q)-d/2q}
\mu(q)+2\sigma(2q)-\frac{d}{2q}
=
\begin{cases}
0, & 1\le q\le \dfrac{q_c}{2},\\
1-\frac{q_c}{2q}, & \dfrac{q_c}{2}\le q\le q_c,\\
\frac12, & q_c\le q\leq\infty,
\end{cases}
\end{align}
it follows that $-\mu(q)<2\sigma(2q)-\frac{d}{2q}$ for $q>q_c/2$. Hence, the probabilistic bound improves upon the deterministic one in this range.
\end{remark}

\subsection{Half-resolvent estimates}\label{sect:Half-resolvent estimates}
We prove the following deterministic bounds for a larger range of exponents than needed for the proof of Theorem \ref{thm:main_intro}.
Let 
\begin{align}\label{def:delta(z)}
d(z):=\dist(z,\spec(-\Delta_g)),\quad \langle z\rangle:=2+|z|,\quad \delta(z):=\frac{\min\{d(z),|z|^{1/2}\}}{\log\langle z\rangle}.
\end{align}

\begin{proposition}\label{prop:L2_to_Lp'_resolvent_estimate}
  Let $\Delta_g$ be the Laplace--Beltrami operator on a $d$-dimensional closed Riemannian manifold $(M,g)$. Let $2\leq p'\leq 2d/(d-2)$ if $d\geq 3$ and $2\leq p'<\infty$ if $d=2$.
  Then for all $z\in \C\setminus\spec(-\Delta_g)$,
  \begin{align}
    \||-\Delta_g-z|^{-1/2}\|_{L^{2}(M)\to L^{p'}(M)}
    \lesssim \delta(z)^{-1/2} \langle z\rangle^{\frac{\nu(p')}{2}}.\label{eq: deterministic half resolvent bound}
  \end{align}  
  The implicit constant depends only on $(M,g)$ and $d,p$, but not on $z$.
\end{proposition}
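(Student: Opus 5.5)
Since $|-\Delta_g-z|^{-1/2}$ is a function of $P=\sqrt{-\Delta_g}$, we can bound it by a $TT^*$ argument over unit spectral windows. Write $m_z(\tau):=|\tau^2-z|^{-1}$, so that
\[
\||-\Delta_g-z|^{-1/2}\|_{L^2\to L^{p'}}^2=\|m_z(P)\|_{L^{p'}{}'\to L^{p'}}.
\]
Decompose $m_z(P)=\sum_{k\ge0}m_z(P)\mathbf 1(P\in[k,k+1))$. Each piece is bounded by $\sup_{\tau\in[k,k+1)}m_z(\tau)\,\|\Pi_k\|_{L^2\to L^{p'}}^2$ via $\Pi_k\Pi_k^*$ and \eqref{eq: Sogge bound large p'}. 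We may not simply sum these bounds, since that would cost an extra power of $\langle z\rangle$. Instead we split into the regions $|\tau-|z|^{1/2}|\lesssim 1$ (near) and the rest (far), and treat the far region by a Sobolev/smoothing argument.

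\textbf{Step 1 (near region).} Set $\tau_0=\re(z^{1/2})$. For $|\tau-\tau_0|\le 1$ we have $|\tau^2-z|\ge d(z)$. Also $|\tau^2-z|\gtrsim \tau_0\,|\tau-\tau_0|+|\im z|$ and $|z|^{1/2}\le 1+\tau_0$, so $m_z(\tau)\lesssim\min\{d(z),|z|^{1/2}\}^{-1}$. Note that $\min\{d(z),|z|^{1/2}\}$ is essentially $\delta(z)\log\langle z\rangle$. Using $O(1)$ windows $\Pi_k$ with $k\approx\tau_0$ and Sogge's bound $k^{2\nu(p')}\approx\langle z\rangle^{\nu(p')}$ gives the contribution $\min\{d(z),|z|^{1/2}\}^{-1}\langle z\rangle^{\nu(p')}$, which is dominated by the claimed right-hand side.

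\textbf{Step 2 (intermediate dyadic shells).} Take $1\le|\tau-\tau_0|\le\tau_0/2$. Decompose dyadically, $|\tau-\tau_0|\sim 2^j$, for $1\le 2^j\le\tau_0$; there are $O(\log\langle z\rangle)$ shells. On shell $j$ we have $m_z\lesssim(\tau_0 2^j)^{-1}$, and the corresponding projector $\mathbf 1(P\in I_j)$ with $|I_j|\sim2^j$ satisfies
\[
\|\mathbf 1(P\in I_j)\|_{L^2\to L^{p'}}^2\lesssim 2^j\tau_0^{2\nu(p')},
\]
by Sogge's bound for windows of length $\ge1$, which follows by orthogonality from the unit-window bound when $p'\le 2(d+1)/(d-1)$ and from a kernel/square-function bound in general. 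It is convenient to replace sharp cutoffs by smooth ones, $\chi((P-\tau_0)/2^j)$. Each shell then contributes $\lesssim\tau_0^{2\nu(p')-1}\lesssim |z|^{-1/2}\langle z\rangle^{\nu(p')}$, and summing over the shells gives $\log\langle z\rangle\,|z|^{-1/2}\langle z\rangle^{\nu(p')}$. This is exactly $\delta(z)^{-1}\langle z\rangle^{\nu(p')}$ when $\min=|z|^{1/2}$, and is smaller otherwise. The log in the definition of $\delta$ is designed to absorb this sum.

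\textbf{Step 3 (far region).} The region $|\tau-\tau_0|\ge\tau_0/2$, including $\tau\le\tau_0/2$ and $\tau\gg\tau_0$, together with the case where $z$ is far from the positive axis, is handled by comparing $m_z(\tau)$ with $\langle\tau\rangle^{-2}$ and $\langle z\rangle^{-1}$:
\[
m_z(\tau)\lesssim\min\bigl(\langle z\rangle^{-1},\langle\tau\rangle^{-2}\bigr)\quad\text{up to }O(1).
\]
For $\tau\gg\tau_0$ this is bounded by $(1-\Delta_g)^{-1}$ restricted to $P\gtrsim\tau_0$. The Sobolev embedding $H^1\to L^{p'}$, valid for $p'\le 2d/(d-2)$ (this is where the restriction on $p'$ enters), combined with a dyadic Littlewood–Paley sum in $\tau$ using Sogge's bounds on dyadic shells, gives $\sum_{2^l\ge\tau_0}2^{-2l}2^{l}2^{2l\nu(p')}$. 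This is finite and $\lesssim\tau_0^{2\nu(p')-1}$ since $2\nu(p')<1$ would be needed; when $2\nu(p')\ge 1$, use Sobolev $\|(1-\Delta)^{-1/2}\|_{L^2\to L^{p'}}<\infty$ directly instead. For $\tau\le\tau_0/2$, bound $m_z\lesssim\langle z\rangle^{-1}$ and use $\|\mathbf 1(P\le\tau_0)\|_{L^2\to L^{p'}}^2\lesssim\tau_0^{1+2\nu(p')}$, which is again $\lesssim\langle z\rangle^{\nu(p')-1/2}$.

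I expect the main obstacle to be the bookkeeping in Step 3 for large $p'$, specifically reconciling the high-frequency tail with the endpoint $2d/(d-2)$. I would first try a plain $H^1$ Sobolev bound on the tail.
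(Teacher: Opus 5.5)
Your argument is correct and is essentially the paper's. The paper also splits off unit windows $\Pi_k$ near $k_0=\lceil\sqrt{\re z}\,\rceil$, with weights $d(z)^{-1}$ for $|k-k_0|\le 10$ and $\lesssim(\sqrt{\re z}\,|k-k_0|)^{-1}$ otherwise, so that Sogge's bound and the harmonic sum produce the logarithm in $\delta(z)$; your dyadic shells are this sum regrouped, working with $TT^*$ instead of the dual $L^p\to L^2$ bound is cosmetic, and your length-$2^j$ window bound holds for every $p'$ by the triangle inequality and Cauchy--Schwarz over unit windows, with no square function needed.

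The only real difference is the far region. The paper treats the spectral region $-\Delta_g\notin[\re z/2,2\re z]$ and the sector $|\im z|\ge\re z$ by the scaled Sobolev bound $\|(-\Delta_g+|z|)^{-1/2}\|_{L^2\to L^{p'}}\lesssim|z|^{\frac d2(\frac12-\frac1{p'})-\frac12}$, and $\re z\le 1$ by an external lemma. You instead sum Sogge's bounds dyadically, which converges precisely when $2\nu(p')<1$, i.e.\ away from the endpoint $p'=2d/(d-2)$, and use the unscaled Sobolev embedding only at that endpoint, where it gives $O(1)=\tau_0^{2\nu(p')-1}$; both routes suffice.
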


\begin{proof} 
  1. The same argument as in \cite[Lemma 2.3]{Cuenin2026} yields
  \[
    \||-\Delta_g-z|^{-1/2}\|_{L^{2}(M)\to L^{p'}(M)}
    \lesssim d(z)^{-1/2},\quad \re z\leq 1,
  \]
  which is better than \eqref{eq: deterministic half resolvent bound}.

  2. It remains to consider the case $\re z>1$.
  Let $\Sigma:=\{z\in\C:|\im z|\geq \re z\}$. Then for $z\in \Sigma$, we have 
  \[
    \Big\|\frac{(-\Delta_g+|z|)^{1/2}}{|-\Delta_g-z|^{1/2}}\Big\|\lesssim 1.
  \]
  Thus, by Sobolev embedding,
  \begin{align*}
    \||-\Delta_g-z|^{-1/2}f\|_{L^{p'}}
    &=\|(-\Delta_g+|z|)^{-1/2}\frac{(-\Delta_g+|z|)^{1/2}}{|-\Delta_g-z|^{1/2}}f\|_{L^{p'}}\\
    & \lesssim |z|^{\frac{d}{2}(\frac{1}{p}-\frac{1}{2})-\frac{1}{2}}\|\frac{(-\Delta_g+|z|)^{1/2}}{|-\Delta_g-z|^{1/2}}f\|_{L^2}
\lesssim |z|^{\frac{d}{2}(\frac{1}{p}-\frac{1}{2})-\frac{1}{2}}\|f\|_{L^2}
  \end{align*}
for $z\in \Sigma$, which is again better than \eqref{eq: deterministic half resolvent bound} for $\re z>1$. A similar argument shows
  \begin{align*}
    \||-\Delta_g-z|^{-1/2}\mathbf{1}(-\Delta_g\notin[\re z/2,2\re z])\|_{L^{2}(M)\to L^{p'}(M)}
    & \lesssim|z|^{\frac{d}{2}(\frac{1}{p}-\frac{1}{2})-\frac{1}{2}}.
  \end{align*}

  \noindent
  3. It remains to estimate the half resolvent, spectrally localised to $[\re z/2,2\re z]$, for $\re z>\max(1,|\im z|)$. Let $f\in \Ran\mathbf{1}(-\Delta_g\in[\re z/2,2\re z])$. We first observe that
  \begin{align*}
    |-\Delta_g-z|^{-\frac12}f=\sum_{k\in \Z\cap [0,2\sqrt{\re z}]}\Pi_k\Upsilon_{k,z}f,
  \end{align*}
  where 
  $$
  \Upsilon_{k,z}:=\sum_{\lambda_j^2\in [k^2,(k+1)^2)\cap [\re z/2,2\re z] }\Big(\frac{1}{|\lambda_j^2-z|}\Big)^{1/2}E_j,
  $$
  and $E_j$ is the orthogonal projection onto the eigenspace of $-\Delta_g$ corresponding to $\lambda_j^2$.
  Let $k_0:=\ceil{\sqrt{\re z}}$.
  By the spectral theorem, we have the $L^2(M)$-operator norm estimates
  \begin{align}
    \|\Upsilon_{k,z}\|^2\leq 
    \begin{cases}
    \frac{1}{d(z)},\quad &\mbox{for }|k-k_0|\leq 10,\\
    \frac{5}{\sqrt{\re z}|k-k_0|},\quad &\mbox{for }|k-k_0|> 10.
    \end{cases}
  \end{align}
Indeed, for $|k-k_0|> 10$, $k\in \Z\cap [0,2\sqrt{\re z}]$ and $\re z\geq 1$, we have 
\begin{align*}
    |\lambda_j^2-z|
    & \geq |k^2-\re z|-(2k+1)
    \geq \sqrt{\re z}|k-k_0|-2(\sqrt{\re z}+k+1) \\
    & \geq \frac{1}{5}\sqrt{\re z}|k-k_0|.
\end{align*}
  By orthogonality and Sogge's spectral cluster bounds \eqref{eq: Sogge bound large p'}, it follows that 
  \begin{align*}
    \||-\Delta_g-z|^{-\frac12}f\|_{L^2}^2
    &\leq \sum_{k\in \Z\cap [0,2\sqrt{\re z}]} \|\Upsilon_{k,z}\|^2\|\Pi_kf\|_{L^2}^2\\
    &\lesssim (\sqrt{\re z})^{2\nu(p')}\|f\|_{L^p}^2\sum_{k\in \Z\cap [0,2\sqrt{\re z}]} \|\Upsilon_{k,z}\|^2\\
    &\lesssim (\sqrt{\re z})^{2\nu(p')}\Big(\frac{1}{d(z)}+\frac{\log\sqrt{\re z}}{\sqrt{\re z}}\Big)\|f\|_{L^p}^2.
  \end{align*}
 By duality (using that estimates are invariant under $z\to \overline{z}$), we conclude that
  \begin{align*}
    \||-\Delta_g-z|^{-1/2}\mathbf{1}(-\Delta_g\in[\re z/2,2\re z])\|_{L^{2}(M)\to L^{p'}(M)}
    & \lesssim \Big(\frac{1}{d(z)}+\frac{\log |z|}{|z|^{1/2}}\Big)^{1/2} |z|^{\frac{\nu(p')}{2}}
  \end{align*}
  for $\re z>\max(1,|\im z|)$.
\end{proof}

\begin{remark}\label{rem:p'}
We used $p'\leq 2d/(d-2)$ (or $p'<\infty$ if $d=2$) only in Step 2. The estimates in Steps 1 and 3 hold for $2\leq p'\leq \infty$.

\end{remark}

We can also draw the following consequence from the above proof.

\begin{corollary}\label{cor:resolvent_bound_cutoffs}
 Let $P=\sqrt{-\Delta_g}$ and $\lambda\geq 2$. Let $\chi_-$ and $\chi$ be smooth functions on $\R$ supported on the intervals $(-\infty,4)$ and $(1/4,4)$, respectively. Then for all $\lambda\geq 2$, $k\in\N_0$ and all $2\leq p'\leq\infty$,
 \begin{align}
 \||-\Delta_g-z|^{-1/2}\chi_-(P/\lambda)\|_{L^{2}(M)\to L^{p'}(M)}&\lesssim 
 \delta(z)^{-1/2} \langle z\rangle^{\frac{\nu(p')}{2}},\label{eq:resolvent_chi_-}\\
 \||-\Delta_g-z|^{-1/2}\chi(P/2^k)\|_{L^{2}(M)\to L^{p'}(M)}&\lesssim 2^{k(\frac{d}{2}-\frac{d}{p'}-1)}\quad \mbox{for } |z|\leq 2^{2k-5}.\label{eq:resolvent_chi_k}
 \end{align} 
\end{corollary}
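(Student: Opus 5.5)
The plan is to rerun Steps~1--3 of the proof of Proposition~\ref{prop:L2_to_Lp'_resolvent_estimate}, replacing the Sobolev embedding of Step~2 (the only place where $p'\le 2d/(d-2)$ was used, cf.\ Remark~\ref{rem:p'}) by a dyadic Bernstein-type bound. The spectral cutoffs in the statement are what make this replacement possible.

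The basic tool is the Bernstein estimate
\[
\|\chi(P/N)\|_{L^2(M)\to L^{p'}(M)}\lesssim N^{d(\frac12-\frac1{p'})},\qquad N\ge 1,\ 2\le p'\le\infty,
\]
and the same bound for $\chi_-(P/N)$. The $p'=2$ case is trivial. For $p'=\infty$ we write $\|\chi(P/N)\|_{2\to\infty}^2=\sup_x\sum_j|\chi(\lambda_j/N)|^2|e_j(x)|^2$ and apply the local Weyl law (pointwise bound on the spectral function) $\sum_{\lambda_j\le \mu}|e_j(x)|^2\lesssim \mu^d$. The intermediate $p'$ follow by interpolation.

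\textbf{Bound \eqref{eq:resolvent_chi_k}.} For $\lambda_j\in\supp\chi(\cdot/2^k)\subset(2^{k-2},2^{k+2})$ and $|z|\le 2^{2k-5}$ we have $|\lambda_j^2-z|\ge 2^{2k-4}-2^{2k-5}\gtrsim 2^{2k}$. Hence, by the spectral theorem, $\||-\Delta_g-z|^{-1/2}\tilde\chi(P/2^k)\|_{2\to2}\lesssim 2^{-k}$, where $\tilde\chi$ is a slightly fatter cutoff with $\tilde\chi\chi=\chi$. Writing $|-\Delta_g-z|^{-1/2}\chi(P/2^k)=\chi(P/2^k)\,|-\Delta_g-z|^{-1/2}\tilde\chi(P/2^k)$ and applying Bernstein to the outer factor gives $2^{k(\frac d2-\frac d{p'})}\cdot 2^{-k}$, which is exactly \eqref{eq:resolvent_chi_k}.

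\textbf{Bound \eqref{eq:resolvent_chi_-}.} Step~1 (the region $\re z\le1$) and Step~3 (the window $[\re z/2,2\re z]$) carry over unchanged for all $p'\ge2$, since they use only Sogge's bounds \eqref{eq: Sogge bound large p'} and the same orthogonality argument. The cutoff $\chi_-$ commutes with everything and has $L^2$-norm at most one. What remains is to replace Step~2, namely the sector $\Sigma$ and the frequencies outside $[\re z/2,2\re z]$.

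To do this, decompose $\chi_-(P/\lambda)$ dyadically as $\sum_{2^j\lesssim\lambda}\chi(P/2^j)$, plus a low-frequency piece. On the $j$-th piece we have $|\lambda_\ell^2-z|\gtrsim\max(2^{2j},|z|)$, by the same geometry as in Step~2. The argument above then bounds the $j$-th piece by
\[
2^{j d(\frac12-\frac1{p'})}\max(2^{j},|z|^{1/2})^{-1},
\]
and we sum over $2^j\lesssim\lambda$.
\begin{itemize}
\item For $2^j\lesssim|z|^{1/2}$ the sum is geometric and gives $|z|^{\frac d2(\frac12-\frac1{p'})-\frac12}$. This is $\lesssim\delta(z)^{-1/2}\langle z\rangle^{\nu(p')/2}$, because $\nu(p')\ge d(\frac12-\frac1{p'})-\frac12$ and $\delta(z)\lesssim|z|^{1/2}$.
\item For $2^j\gtrsim|z|^{1/2}$ the sum is again geometric when $d(\frac12-\frac1{p'})<1$. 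When $d(\frac12-\frac1{p'})\ge1$, it instead grows like $\lambda^{d(\frac12-\frac1{p'})-1}$ (with a $\log\lambda$ at equality).
\end{itemize}

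\textbf{Expected obstacle.} The main difficulty is this top-frequency tail for large $p'$. I would first try to absorb it using the Sogge-based orthogonality sum of Step~3 extended to all $k\le4\lambda$, that is, $\sum_k k^{2\nu(p')}\|\Upsilon_{k,z}\|^2$ with $\|\Upsilon_{k,z}\|^2\lesssim\max(k^2,|z|)^{-1}$. This handles $\nu(p')<\frac12$ uniformly in $\lambda$. For the remaining exponents I expect that the implicit constant must either be allowed to depend on $\lambda$ through a factor $\lambda^{(2\nu(p')-1)_+}$, or be controlled in the intended regime $|z|\sim\lambda^2$, where the tail is comparable to the main term $\langle z\rangle^{\nu(p')/2}\,\delta(z)^{-1/2}$. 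Checking which of these is compatible with the later application is the step I would verify most carefully.
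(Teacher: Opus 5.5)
Your argument for \eqref{eq:resolvent_chi_k} is the paper's. When $|z|\le 2^{2k-5}$, the spectral theorem gives $|-\Delta_g-z|^{-1/2}\lesssim 2^{-k}$ on the support of $\chi(P/2^k)$; the paper phrases this as comparability with $(-\Delta_g+2^{2k})^{-1/2}$. The Bernstein bound $2^{kd(\frac12-\frac1{p'})}$ then finishes.

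For \eqref{eq:resolvent_chi_-} the paper only points to Proposition~\ref{prop:L2_to_Lp'_resolvent_estimate} and Remark~\ref{rem:p'}. For $p'\le 2d/(d-2)$ this suffices, since $\chi_-(P/\lambda)$ is a bounded $L^2$ multiplier commuting with $|-\Delta_g-z|^{-1/2}$. Beyond that range your obstacle is real, and it defeats the statement, not your method: no bound uniform in $\lambda$ is possible.

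Take $p'=\infty$, $z=-1$, and $\chi_-=1$ on $[0,1]$, as in the application where $\chi_-+\chi_+=1$. Let $(e_j)$ be an orthonormal eigenbasis with $-\Delta_g e_j=\lambda_j^2e_j$. Bounding the supremum in $x$ from below by the average and using Weyl's law,
\[
\bigl\|(-\Delta_g+1)^{-1/2}\chi_-(P/\lambda)\bigr\|_{L^2\to L^\infty}^2
=\sup_{x\in M}\sum_j\frac{\chi_-(\lambda_j/\lambda)^2}{1+\lambda_j^2}\,|e_j(x)|^2
\geq\frac{1}{\mathrm{vol}(M)}\sum_{\lambda_j\le\lambda}\frac{1}{1+\lambda_j^2}
\gtrsim\lambda^{d-2},
\]
and $\gtrsim\log\lambda$ if $d=2$. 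By contrast, $\delta(-1)^{-1/2}\langle -1\rangle^{\nu(\infty)/2}=O(1)$.

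So your first alternative is the correct one. Summing your dyadic Bernstein bounds gives, for $p'>2d/(d-2)$,
\[
\bigl\||-\Delta_g-z|^{-1/2}\chi_-(P/\lambda)\bigr\|_{L^2\to L^{p'}}
\lesssim\delta(z)^{-1/2}\langle z\rangle^{\frac{\nu(p')}{2}}+\lambda^{\nu(p')-\frac12}.
\]
This matches the lower bound above at $p'=\infty$, and the last term is absorbed only when $|z|\gtrsim\lambda^2$. The power for the norm is $\lambda^{(\nu(p')-\frac12)_+}$. Your $\lambda^{(2\nu(p')-1)_+}$ is the exponent of the squared sum $\sum_k k^{2\nu(p')}\|\Upsilon_{k,z}\|^2$.

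One correction: do not carry Step~1 over unchanged to all $p'\geq 2$; Remark~\ref{rem:p'} is wrong on this point. For $\re z\le 1$ and $|z|\lesssim 1$, the operator $|-\Delta_g-z|^{-1/2}$ has order $-1$. It is therefore not even bounded $L^2\to L^{p'}$ for $p'>2d/(d-2)$, and the counterexample above lies exactly in that region. Your dyadic argument has to be run there too, with the same outcome.

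As for the later application, Step~1 of Lemma~\ref{lem:asymmetric_Birman-Schwinger} uses \eqref{eq:resolvent_chi_-} with $p'=\infty$, as the $L^\infty$ factor in Theorem~\ref{thm:abstract2}, for all $|z|\le\lambda^2$. So the extra term is not confined to the harmless regime $|z|\sim\lambda^2$. For bounded $z$ at distance $\gtrsim 1$ from the spectrum, that argument only yields \eqref{eq:chi_-chi_-} with an extra factor $\lambda^{\frac{d-2}{2}}$ ($\sqrt{\log\lambda}$ if $d=2$). The $L^{p'}$ factor there is unaffected, since $q\geq d/2$ is equivalent to $p'\le 2d/(d-2)$.
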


\begin{proof}
 By the spectral theorem,
   \[
    \Big\|\frac{\chi(P/2^k)(-\Delta_g+2^{2k})^{1/2}}{|-\Delta_g-z|^{1/2}}\Big\|\leq\sup_{2^{k-2}<\tau<2^{k+2}}\left|\frac{\tau^2+2^{2k}}{\tau^2-z}\right|^{1/2}\lesssim 1\quad \mbox{for } |z|\leq 2^{2k-5}. 
  \]
  Thus, by Bernstein inequalities,
  \begin{align*}
    \||-\Delta_g-z|^{-1/2}\chi(P/2^k)f\|_{L^{p'}}
    & \lesssim 2^{k(\frac{d}{2}-\frac{d}{p'})} \|(-\Delta_g+2^{2k})^{-1/2}f\|_{L^{2}} \\
    & \lesssim 2^{k(\frac{d}{2}-\frac{d}{p'}-1)} \|f\|_{L^2}.
  \end{align*}
The bound \eqref{eq:resolvent_chi_-} follows from \eqref{eq: deterministic half resolvent bound} and Remark \ref{rem:p'}.
\end{proof}

\subsection{Birman--Schwinger operator}
Let us consider the Birman--Schwinger operator
\[
  \mathcal{K}_{\omega,\lambda}(z) := |-\Delta_g-z|^{-\frac12} V_{\omega,\lambda} (-\Delta_g-z)^{-\frac12},\quad z\in\C\setminus\spec(-\Delta_g).
\]
where $V_{\omega,\lambda}$ is of Anderson-type, as in \eqref{eq:Anderson-type}. More precisely, we assume that
\begin{align}\label{eq:Anderson-type-repeated}
  V_{\omega,\lambda}(x)
  = \sum_{j=1}^{N(\lambda)} \omega_j v_j(\lambda) \psi_j(x;\lambda),
  \quad x \in M,
\end{align}
where $\psi_j(\cdot;\lambda)\in C_c^{\infty}(M;[0,1])$, and $E_j(\lambda)=\supp \psi_j(\cdot;\lambda)$ have diameter $\approx \lambda^{-1}$, with bounded overlap
  \[  \sup_{x\in M}\sum_{j=1}^{N(\lambda)}\mathbf{1}_{E_j(\lambda)}(x)\leq C.
  \]  
The coefficients $v_j(\lambda)$ are complex numbers, $\omega_j$ are i.i.d.~symmetric Bernoulli or centred normalised Gaussian random variables, and $\lambda\geq 2$ is a large parameter.

The following is the main result of this subsection.

\begin{proposition}\label{prop:BS_small_q}
  Let $V_{\omega,\lambda}$ be as in \eqref{eq:Anderson-type-repeated}, $\lambda\geq 2$. Then for $z\in\C\setminus\spec(-\Delta_g)$, $|z|\leq \lambda^2$, $\frac{d}{2}\leq q\leq \infty$,
  \begin{align}\label{eq:EnormBSexp}
\mathbf{E}\|\mathcal{K}_{\omega,\lambda}(z)\|
     \lesssim \lambda^{-\frac{d(q+1)}{2q}}\,\delta(z)^{-1}
\,\langle z\rangle^{\frac{d(q+1)}{4q}-\frac{\mu(q)}{2}}
(\log\lambda)^{5/2}\|v(\lambda)\|_{\ell^{2q}},
  \end{align}
where $\delta(z)$ and $\mu(q)$ were defined in \eqref{def:delta(z)} and \eqref{def.mu(q)}, respectively.  
Moreover, for any $K\geq 1$, and $z,q,\lambda$ as above, there exists an event $E_0(z,q,\lambda,K)$ with probability
\begin{align}\label{eq:P(E_0(z,q,lambda,K))}
  \mathbf{P}
  (E_0(z,q,\lambda,K))
  \geq 1-\exp(-K^2)
\end{align}
and such that for all $\omega\in E_0(z,q,\lambda,K)$,
\begin{align}\label{eq:EnormBS}
  \|\mathcal{K}_{\omega,\lambda}(z)\|
  \lesssim K\lambda^{-\frac{d(q+1)}{2q}}\,\delta(z)^{-1}
\,\langle z\rangle^{\frac{d(q+1)}{4q}-\frac{\mu(q)}{2}}
(\log\lambda)^{5/2}\|v(\lambda)\|_{\ell^{2q}}.
\end{align}
\end{proposition}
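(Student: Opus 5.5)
The plan is to apply Theorem~\ref{thm:abstract2} with $X=M$, $\mathcal H_1=\mathcal H_2=L^2(M)$, $T_1=|-\Delta_g-\bar z|^{-1/2}$ and $T_2=(-\Delta_g-z)^{-1/2}$, randomisation length $r\approx\lambda^{-1}$, and potential $V=\sum_j v_j(\lambda)\psi_j$. Two preliminary reductions are needed.

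\emph{Matching the randomisation.} The Anderson potential is randomised over the supports $E_j(\lambda)$ rather than over Voronoi cells. Since the $\psi_j$ have bounded overlap and diameter $\approx\lambda^{-1}$, I would split $\{1,\dots,N(\lambda)\}$ into $O(1)$ subfamilies. In each subfamily the supports are pairwise $\gtrsim\lambda^{-1}$-separated, so one can choose a maximal $r$-separated set $\Lambda$, with $r=c\lambda^{-1}$, whose Voronoi cells each meet at most one support. An alternative is to work directly with the discrete Theorem~\ref{thm:abstract1_intro}, using the partition by $j$ after reparametrisation. Then $\sum_{x\in\Lambda}\|V\|_{L^\infty(B(x,2r))}^{2q}\lesssim\|v(\lambda)\|_{\ell^{2q}}^{2q}$ and $\log\#\Lambda\lesssim\log\lambda$.

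\emph{Frequency localisation.} The half-resolvents do not have locally constant range at scale $\lambda^{-1}$, since their ranges contain all frequencies. I would therefore decompose with a Littlewood--Paley partition $1=\chi_-(P/\lambda)+\sum_{2^k>\lambda}\chi(P/2^k)$ on each side. For the pieces $T_i\chi_-(P/\lambda)$ the range is locally constant at scale $\lambda^{-1}$ (Appendix~\ref{s:localconstancy}), and Corollary~\ref{cor:resolvent_bound_cutoffs}, \eqref{eq:resolvent_chi_-}, gives
\[
\|T_1\chi_-\|_{L^2\to L^{p'}}\lesssim\delta(z)^{-1/2}\langle z\rangle^{\nu(p')/2},\qquad \|T_2\chi_-\|_{L^2\to L^\infty}\lesssim\delta(z)^{-1/2}\langle z\rangle^{\nu(\infty)/2}.
\]
Theorem~\ref{thm:abstract2}(1) then yields
\[
\lambda^{-d/p}\,\delta(z)^{-1}\langle z\rangle^{(\nu(p')+\nu(\infty))/2}(\log\lambda)^{5/2}\|v\|_{\ell^{2q}}.
\]
With $\nu(p')+\nu(\infty)=\sigma(q)+\frac{d-1}{2}$, one checks that $-d/p+\nu(p')+\nu(\infty)=-\mu(q)$. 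Since $d/p=\frac{d(q+1)}{2q}$, rewriting the exponent gives
\[
\langle z\rangle^{(\nu(p')+\nu(\infty))/2}=\langle z\rangle^{\frac{d(q+1)}{4q}-\frac{\mu(q)}{2}},
\]
which is exactly \eqref{eq:EnormBSexp}.

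\emph{High-frequency pieces.} For $2^k\gtrsim\lambda$ one has $|z|\le\lambda^2\le2^{2k-5}$ after adjusting constants, so \eqref{eq:resolvent_chi_k} applies. The range of $\chi(P/2^k)$ is locally constant at the finer scale $2^{-k}$, so I would use Theorem~\ref{thm:abstract2}(2) with $r=2^{-k}$ and $r'=\lambda^{-1}$. This gives
\[
2^{-kd/p}(2^k/\lambda)^{\frac dp-\frac d{2q}}\,2^{k(\frac d2-\frac d{p'}-1)}\,2^{k(\frac d2-1)}(\log 2^k)^{5/2}\|v\|_{\ell^{2q}}.
\]
Mixed low/high pairs are treated the same way. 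The condition $q\ge d/2$ should make the power of $2^k$ negative, so the sum over $k$ converges geometrically and is dominated by the low-frequency term. The $\log 2^k$ factor is absorbed in the same way, because $\#\Lambda_r$ grows only polynomially in $2^k$. This summability check is the step I expect to be the main obstacle. The exponent is roughly $d-2-\frac d{2q}-\dots$, and it requires care: it may be necessary to use the $\ell^{2q}$ structure more cleverly, or to bound the high-frequency part deterministically via Sobolev embedding and H\"older with $\|V\|_{L^{2q}}\approx\lambda^{-d/2q}\|v\|_{\ell^{2q}}$, which is also acceptable when $q\ge d/2$.

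\emph{Tail bound.} Finally, $\mathcal K_{\omega,\lambda}(z)$ is a vector-valued Rademacher or Gaussian series in $\omega$. Lemma~\ref{lem:tailbound} therefore converts \eqref{eq:EnormBSexp} into \eqref{eq:EnormBS} on an event $E_0$ satisfying \eqref{eq:P(E_0(z,q,lambda,K))}, after rescaling $K$ by the constant $c$.
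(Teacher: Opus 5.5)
Your route is the paper's: split at frequency $\lambda$, handle the low--low block with Theorem~\ref{thm:abstract2}(1) and \eqref{eq:resolvent_chi_-}, handle the blocks containing $\chi(P/2^k)$ with Theorem~\ref{thm:abstract2}(2) at $r=2^{-k}$, $r'=\lambda^{-1}$ (Lemma~\ref{lem:asymmetric_Birman-Schwinger}, Corollary~\ref{cor:chi_+chi_-and_chi_+chi_+}), then apply Lemma~\ref{lem:tailbound}. Your concern about the randomisation is justified. Cells of size $c\lambda^{-1}$ meeting one $E_j$ share the same $\omega_j$, so your first option is not literally of the form \eqref{eq:defrandomisation}. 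Your alternative works: use Theorem~\ref{thm:abstract1_intro} with cells grouped by $j$, together with Remark~\ref{rem:Holder}.

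The step you flag, however, is a genuine gap, not a bookkeeping issue. First, your power count is off. Using $\frac1p+\frac1{p'}=1$ and $\frac dp-\frac d{2q}=\frac d2$, your diagonal high block equals $\lambda^{-d/2}2^{k(\frac d2-2)}$ up to logarithms. This does not depend on $q$, so $q\ge d/2$ gives nothing, and the sum over $k$ diverges for $d\ge 4$. The high--high part can be rescued: for $|z|\le\lambda^2$ one has $\||-\Delta_g-z|^{-1/2}\chi_+(P/\lambda)\|_{L^2\to L^2}\lesssim\lambda^{-1}$, so it is $\lesssim\lambda^{-2}\|V_{\omega,\lambda}\|_{L^\infty}$, and your Sobolev--H\"older route gives the same size. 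But every such bound is of order $\lambda^{-2}\|v(\lambda)\|_{\ell^{2q}}$, and the final absorption into the low--low term is what fails.

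At $z=-1$ one has $\delta(z)=1/\log 3$, and the right-hand side of \eqref{eq:EnormBSexp} is $\approx\lambda^{-\frac d2-\frac d{2q}}(\log\lambda)^{5/2}\|v(\lambda)\|_{\ell^{2q}}$. This is $o(\lambda^{-2}\|v(\lambda)\|_{\ell^{2q}})$ whenever $\frac d2+\frac d{2q}>2$, that is, for $d\ge 5$, for $d=4$ with $q<\infty$, and for $d=3$ with $q<3$.

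No cleverer argument can fix this, because in that range the estimate itself is false. For every $u\in H^1(M)$,
\[
\|\mathcal K_{\omega,\lambda}(-1)\|\ge\frac{|\langle u,V_{\omega,\lambda}u\rangle|}{\|\nabla u\|_{L^2}^2+\|u\|_{L^2}^2}.
\]
Take $v(\lambda)$ to be a single spike at $j_0$, so $\|v(\lambda)\|_{\ell^{2q}}=1$. Take $u$ a bump of radius $c\lambda^{-1}$ on which $\psi_{j_0}\ge\frac12$, as happens for standard bumps $\psi_j=\psi(\lambda d_g(\cdot,x_j))$. This gives $\mathbf E\|\mathcal K_{\omega,\lambda}(-1)\|\gtrsim\lambda^{-2}$, contradicting \eqref{eq:EnormBSexp}.

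Relatedly, the input \eqref{eq:resolvent_chi_-} at $p'=\infty$, which you use for the low--low block, is too optimistic at bounded $z$. Frequencies up to $\approx\lambda$ survive $\chi_-(P/\lambda)$, and Weyl's law gives $\|(-\Delta_g+1)^{-1/2}\chi_-(P/\lambda)\|_{L^2\to L^\infty}\gtrsim\lambda^{\frac d2-1}$ for $d\ge 3$, not $O(1)$.

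The paper's proof makes the same absorption in its last step (``using $\delta(z)\le\langle z\rangle^{1/2}\lesssim\lambda$ and $q\ge d/2$'') and breaks there for the same reason. A correct version of \eqref{eq:EnormBSexp} needs an additional term of size at least $\lambda^{-2}\|v(\lambda)\|_{\ell^{2q}}$ at bounded $z$, or a restriction on $(d,q)$.
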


\begin{remark}
    Similarly to Proposition \ref{prop: manifold spectral cluster expectation}, our method could handle randomisations of a fixed potential, see Remark \ref{rem:rand.of_fixed_V}. For the sake of exposition and compactness of notation, we restrict our attention to Anderson-type potentials.
\end{remark}

In the following, to simplify notation, we use the abbreviations
\begin{align}
A_{\lambda} & := \lambda^{-\frac{d(q+1)}{2q}}
(\log\lambda)^{5/2}\|v(\lambda)\|_{\ell^{2q}},
\label{def:Alambda}
\\
B(z) & := \delta(z)
\,\langle z\rangle^{\frac{\mu(q)}{2}-\frac{d(q+1)}{4q}}.\label{def:B(z)}
\end{align}
The proof of Proposition \ref{prop:BS_small_q} will be given at the end of this subsection.

\begin{lemma}\label{lem:asymmetric_Birman-Schwinger}
 Under the assumptions of Corollary \ref{cor:resolvent_bound_cutoffs}, we have, for $1\leq q\leq \infty$,
 \begin{align}\label{eq:chi_-chi_-}
 \mathbf{E}\|\chi_-(P/\lambda)\mathcal{K}_{\omega,\lambda}(z)\chi_-(P/\lambda)\|&\lesssim A_{\lambda}B(z)^{-1}.
 \end{align}
Moreover, for $|z|\leq 2^{2k-5}$, $\lambda\lesssim 2^k$,
\begin{align}
\mathbf{E}\|\chi(P/2^k)\mathcal{K}_{\omega,\lambda}(z)\chi_-(P/\lambda)\|&\lesssim 2^{-k}\lambda^{-\frac{d}{2}}(\log\lambda)^{5/2}\delta(z)^{-\frac{1}{2}} \langle z\rangle^{\frac{d-1}{4}}\|v(\lambda)\|_{\ell^{2q}},
 \label{eq:chi_k_chi_-}
 \end{align}
 and for $|z|\leq 2^{2\ell-5}$ and $k\geq \ell$,
 \begin{align}
\mathbf{E}\|\chi(P/2^\ell)\mathcal{K}_{\omega,\lambda}(z)\chi(P/2^k)\|&\lesssim 2^{-k}\lambda^{-\frac{d}{2}}(\log\lambda)^{5/2}2^{\ell(\frac{d}{2}-1)}\|v(\lambda)\|_{\ell^{2q}}.
 \label{eq:chi_l_chi_k}
 \end{align}
\end{lemma}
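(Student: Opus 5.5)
Each of the three estimates is an instance of Theorem~\ref{thm:abstract2}~(1) on $X=M$. We take the scale $r=\lambda^{-1}$ (admissible once $\lambda$ is large compared with $\mathrm{inj}(M)^{-1}$; small $\lambda$ is trivial), $\Lambda$ a maximal $\lambda^{-1}$-separated set with $\#\Lambda\lesssim\lambda^d$, and the reparametrisation via $\exp_x^{-1}$ exactly as in the proof of Proposition~\ref{prop: manifold spectral cluster expectation}. The operators are half-resolvents composed with spectral cutoffs, for instance
\[
T_1=|-\Delta_g-z|^{-1/2}\chi_-(P/\lambda),\qquad T_2=(-\Delta_g-\bar z)^{-1/2}\chi_-(P/\lambda),
\]
and similarly with $\chi(P/2^k)$ or $\chi(P/2^\ell)$, so that $\chi_1\mathcal{K}_{\omega,\lambda}(z)\chi_2=T_1^*V_{\omega,\lambda}T_2$. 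Two preliminary points need attention.

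\emph{The potential.} $V_{\omega,\lambda}$ is not literally a Voronoi randomisation, since it is randomised over the bumps $\psi_j$. We will either invoke the discrete Theorem~\ref{thm:abstract1_intro} directly after the reparametrisation, with partition cells indexed by $j$, or group the bumps by the Voronoi cells meeting their supports. Bounded overlap and diameter $\approx\lambda^{-1}$ give $\big(\sum_x\|V\|_{L^\infty(B(x,2r))}^{2q}\big)^{1/2q}\lesssim\|v(\lambda)\|_{\ell^{2q}}$. The logarithm becomes $(\log\#\Lambda)^{5/2}\lesssim(\log\lambda)^{5/2}$.

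\emph{Local constancy.} We need the weak local constancy property at scale $\lambda^{-1}$ for the ranges of the $T_i$. This holds because the ranges lie in $\Ran\chi_-(P/\lambda)$, frequency-localised to $\lesssim\lambda$, and is verified as in Lemma~\ref{verificationlocalconstancyspectralcluster2}. For $\chi(P/2^k)$ the functions live at frequency $2^k\gtrsim\lambda$, so they are only locally constant at scale $2^{-k}\le r$. I expect this to be the main obstacle. My first attempt is to use part~(2) of Theorem~\ref{thm:abstract2}, with fine scale $2^{-k}$ and coarse scale $r'=\lambda^{-1}$. Alternatively, I would place the high-frequency operator in the $L^{p'}$ slot and keep the $L^\infty$ slot for the operator carrying $\chi_-(P/\lambda)$. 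A loss from the $(r'/r)$ factor must then be checked to be compatible with the claimed power.

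For \eqref{eq:chi_-chi_-} we plug in $\|T_1\|_{L^2\to L^{p'}}\lesssim\delta(z)^{-1/2}\langle z\rangle^{\nu(p')/2}$ and $\|T_2\|_{L^2\to L^\infty}\lesssim\delta(z)^{-1/2}\langle z\rangle^{\nu(\infty)/2}$ from \eqref{eq:resolvent_chi_-}; Remark~\ref{rem:p'} allows $p'=\infty$. Multiplying by $r^{d/p}=\lambda^{-d(q+1)/(2q)}$ and using $\nu(\infty)=\frac{d-1}{2}$ together with the identity $-\frac dp+\nu(p')+\nu(\infty)=-\mu(q)$ from the proof of Proposition~\ref{prop: manifold spectral cluster expectation}, we get
\[
\tfrac12\big(\nu(p')+\nu(\infty)\big)=\tfrac{d(q+1)}{4q}-\tfrac{\mu(q)}{2},
\]
which is exactly $A_\lambda B(z)^{-1}$. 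For $q<2$ (allowed here, but not in Theorem~\ref{thm:abstract2}) we reduce to $q=2$ by monotonicity of $\ell^{2q}$ norms only where that is harmless. Otherwise we note that the proof of Theorem~\ref{thm:abstract1_intro} did not use $q\ge2$ beyond $p\le2\le p'$.

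For \eqref{eq:chi_k_chi_-} we take $p=2$, $p'=\infty$, $q=2$ and bound $\|v\|_{\ell^4}\le\|v\|_{\ell^{2q}}$ for $q\le2$; for $q>2$ we use the $\ell^\infty$-type H\"older loss, which is absorbed by $\lambda^{-d/2}$ versus $\lambda^{-d/(2q)}$ bookkeeping. The high-frequency factor enters through \eqref{eq:resolvent_chi_k} with $p'=\infty$, giving $2^{k(d/2-1)}$. The low-frequency factor enters through \eqref{eq:resolvent_chi_-} at $p'=\infty$, giving $\delta^{-1/2}\langle z\rangle^{(d-1)/4}$. The product with $r^{d/2}=\lambda^{-d/2}$ (or the part~(2) refinement at the finer scale $2^{-k}$) should yield the stated $2^{-k}\lambda^{-d/2}$. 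Making the powers of $2^k$ come out as $2^{-k}$ rather than $2^{k(d/2-1)}$ is precisely where the scale choice matters. I would aim to let the randomisation scale be $2^{-k}$ so that $r^{d/p}=2^{-kd/2}$, then convert $\|v\|$ at that scale back to $\lambda$-scale coefficients via Remark~\ref{rem:Holder}.

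Estimate \eqref{eq:chi_l_chi_k} is handled in the same way, with both operators taken from \eqref{eq:resolvent_chi_k}. The lower frequency $2^\ell$ occupies the $L^{p'}$ slot and contributes $2^{\ell(d/2-1)}$, and the higher one yields the $2^{-k}$ decay.
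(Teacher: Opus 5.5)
Your treatment of \eqref{eq:chi_-chi_-} matches the paper's proof. It uses Theorem~\ref{thm:abstract2}~(1) at $r=\lambda^{-1}$, the bound \eqref{eq:resolvent_chi_-} at the exponents $p'$ and $\infty$, and the identity $\nu(p')+\nu(\infty)=\frac{d}{p}-\mu(q)$. Two of your side remarks are fair points the paper passes over: that $q\ge 2$ is not really needed, and that the Anderson potential is not literally a Voronoi randomisation.

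The gap is in \eqref{eq:chi_k_chi_-} and \eqref{eq:chi_l_chi_k}. You name the right tool, part~(2) with $r=2^{-k}$ and $r'=\lambda^{-1}$, but you leave the decisive exponent check undone, and the computation you do commit to fails.
\begin{itemize}
\item The triple $p=2$, $p'=\infty$, $q=2$ is not admissible. The H\"older step $|\langle \xi_1,v_j\xi_2\rangle|\le\|\xi_1\|_{\ell^{p'}}\|v\xi_2\|_{\ell^{p}}$ in the proof of Theorem~\ref{thm:abstract1_intro} needs $\frac1p+\frac1{p'}\ge 1$, which fails since $\frac12+0<1$. Moreover, the prefactor $r^{d/p}$ equals $\nu(Y)\,r^{-d/p'}\approx r^{d-d/p'}$ only for conjugate exponents. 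In the paper's normalisation, $q=2$ means $p=\frac43$, $p'=4$.
\item Even taken at face value, your choice fails. Putting the high-frequency factor in $L^\infty$ costs $2^{k(\frac d2-1)}$, and part~(2) then gives only $2^{k(\frac d4-1)}\lambda^{-\frac d4}$. This exceeds the claim by $(2^k\lambda)^{d/4}$.
\item Randomising at scale $2^{-k}$ is not an option, because the $\omega_j$ live at scale $\lambda^{-1}$. Returning to $\lambda$-scale coefficients via Remark~\ref{rem:Holder} is precisely part~(2), with its $(r'/r)^{\frac dp-\frac d{2q}}$ loss.
\item Deducing $q>2$ from $q=2$ through $\|v\|_{\ell^4}\le\lambda^{\frac d4-\frac d{2q}}\|v\|_{\ell^{2q}}$ loses a power of $\lambda$ that nothing absorbs.
\end{itemize}

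What is missing is the following bookkeeping, which is exactly the paper's argument.
\begin{itemize}
\item Keep the given $q$ with $p=\frac{2q}{q+1}$ and $p'=\frac{2q}{q-1}$. Sample at $r=2^{-k}$; both ranges are locally constant there since $\lambda\lesssim 2^k$.
\item Put the \emph{higher}-frequency factor in the $L^{p'}$ slot.
\item The prefactor is then $2^{-kd/p}(2^k/\lambda)^{\frac dp-\frac d{2q}}=2^{-\frac{kd}{2q}}\lambda^{-\frac d2}$.
\item Meanwhile \eqref{eq:resolvent_chi_k} at exponent $p'$ gives $2^{k(\frac d2-\frac d{p'}-1)}=2^{k(\frac d{2q}-1)}$.
\item The factors $2^{\pm\frac{kd}{2q}}$ cancel for every $q$, leaving $2^{-k}\lambda^{-d/2}$. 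The $L^\infty$ slot contributes $\delta(z)^{-1/2}\langle z\rangle^{\frac{d-1}{4}}$, respectively $2^{\ell(\frac d2-1)}$.
\end{itemize}
So no reduction in $q$ is needed.

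In \eqref{eq:chi_l_chi_k} your slot assignment is reversed. With $2^\ell$ in $L^{p'}$ and $2^k$ in $L^\infty$ one gets $2^{k(\frac d2-\frac d{2q}-1)}2^{\ell(\frac d{2q}-1)}\lambda^{-\frac d2}$. This is too large by $2^{(k-\ell)\frac{d(q-1)}{2q}}$. The $2^{-k}$ decay appears only when the higher frequency is measured in $L^{p'}$, where its cost is exactly compensated by the fine-scale prefactor.
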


\begin{proof}
The proof uses Theorem~\ref{thm:abstract2}, whose assumptions we verified in Proposition~\ref{prop: manifold spectral cluster expectation}. 

\smallskip
1. Theorem \ref{thm:abstract2} (1), with $r=\lambda^{-1}$, together with \eqref{eq:resolvent_chi_-} yields
\begin{align*}
 \mathbf{E}\|\chi_-(P/\lambda)\mathcal{K}_{\omega,\lambda}(z)\chi_-(P/\lambda)\|
 &\lesssim 
 \lambda^{-d/p} (\log\lambda)^{5/2} \delta(z)^{-1} \langle z\rangle^{\frac{\nu(p')+\nu(\infty)}{2}}\|v(\lambda)\|_{\ell^{2q}}.
\end{align*}
Observing that $\nu(p')+\nu(\infty)=d/p-\mu(q)$ and recalling \eqref{eq:1/q=1/p-1/p'}, \eqref{sigma(q)}, this proves \eqref{eq:chi_-chi_-}.

2. Theorem \ref{thm:abstract2} (2), with $r=2^{-k}$, $r'=\lambda^{-1}$, together with \eqref{eq:resolvent_chi_-} and \eqref{eq:resolvent_chi_k} implies
\begin{align*}
 \mathbf{E}\|\chi_-(P/\lambda)\mathcal{K}_{\omega,\lambda}(z)\chi(P/2^k)\|
 &\lesssim 
 2^{-\frac{kd}{p}} (2^k/\lambda)^{\frac{d}{p}-\frac{d}{2q}}(\log\lambda)^{5/2}\\
&\quad\times 2^{k(\frac{d}{2}-\frac{d}{p'}-1)}\delta(z)^{-1/2} \langle z\rangle^{\frac{\nu(\infty)}{2}}\|v(\lambda)\|_{\ell^{2q}}.
\end{align*}
Since $\nu(\infty)=\frac{d-1}{2}$,
\(
2^{k(-\frac{d}{2q}+\frac{d}{2}-\frac{d}{p'}-1)}=2^{-k}
\)
and $\lambda^{\frac{d}{2q}-\frac{d}{p}}=\lambda^{-\frac{d}{2}}$,
this proves \eqref{eq:chi_k_chi_-}.

3. Similarly, Theorem \ref{thm:abstract2} (2) and \eqref{eq:resolvent_chi_k} yield
\begin{align*}
 \mathbf{E}\|\chi(P/2^\ell)\mathcal{K}_{\omega,\lambda}(z)\chi(P/2^k)\|
 &\lesssim 
 2^{-\frac{kd}{p}} (2^k/\lambda)^{\frac{d}{p}-\frac{d}{2q}}(\log\lambda)^{5/2}\\
&\quad\times 2^{k(\frac{d}{2}-\frac{d}{p'}-1)}2^{\ell(\frac{d}{2}-1)}\|v(\lambda)\|_{\ell^{2q}},
\end{align*}
which proves \eqref{eq:chi_l_chi_k}.
\end{proof}

\begin{corollary}\label{cor:chi_+chi_-and_chi_+chi_+}
Let $\lambda\geq 2$ and $\chi_-,\chi_+\in C^{\infty}(\R;[0,1])$ such that $\supp\chi_-\subset (-\infty,8)$ and $\supp\chi_+\subset (4,\infty)$. Then for $z\in\C\setminus\spec(-\Delta_g)$, $|z|\leq \lambda^2$, $1\leq q\leq q_c$,
\begin{align}
\mathbf{E}\|\chi_+(P/\lambda)\mathcal{K}_{\omega,\lambda}(z)\chi_-(P/\lambda)\|&\lesssim \lambda^{-\frac{d}{2}-1}(\log\lambda)^{5/2}\delta(z)^{-\frac{1}{2}} \langle z\rangle^{\frac{d-1}{4}}\|v(\lambda)\|_{\ell^{2q}},\label{eq:chi_+_chi_-}\\
\mathbf{E}\|\chi_+(P/\lambda)\mathcal{K}_{\omega,\lambda}(z)\chi_+(P/\lambda)\|&\lesssim \lambda^{-2}(\log\lambda)^{5/2}\|v(\lambda)\|_{\ell^{2q}}\label{eq:chi_+_chi_+}.
\end{align}
\end{corollary}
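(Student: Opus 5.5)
Both bounds will come from Lemma~\ref{lem:asymmetric_Birman-Schwinger}, applied after a dyadic decomposition of $\chi_+(P/\lambda)$. I fix a Littlewood--Paley partition $\sum_k\chi(P/2^k)$, with $\chi$ supported in $(1/4,4)$ as in Corollary~\ref{cor:resolvent_bound_cutoffs}, and write
\[
\chi_+(P/\lambda)=\sum_{2^k\gtrsim\lambda}\chi_+(P/\lambda)\chi(P/2^k).
\]
Since $\supp\chi_+\subset(4,\infty)$, only $k$ with $2^{k+2}>4\lambda$ occur, so $2^k\gtrsim\lambda$. As $|z|\le\lambda^2$, after discarding finitely many $k$ (or slightly shrinking supports and absorbing constants) we also have $|z|\le 2^{2k-5}$, which is the hypothesis of \eqref{eq:chi_k_chi_-} and \eqref{eq:chi_l_chi_k}. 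The factor $\chi_+(P/\lambda)$ commutes with everything spectral and has norm at most $1$ as a multiplier on the $L^2$ side, so it can be dropped. I also replace the given $\chi_-$ by a cutoff $\tilde\chi_-$ supported in $(-\infty,8)$. The statement of Corollary~\ref{cor:resolvent_bound_cutoffs} uses support in $(-\infty,4)$, but its proof (Remark~\ref{rem:p'} together with \eqref{eq: deterministic half resolvent bound}) only needs bounded support, so the same bound holds.

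For \eqref{eq:chi_+_chi_-}, the triangle inequality and \eqref{eq:chi_k_chi_-} give
\[
\mathbf E\|\chi_+(P/\lambda)\mathcal K\chi_-(P/\lambda)\|\le\sum_{2^k\gtrsim\lambda}\mathbf E\|\chi(P/2^k)\mathcal K\chi_-(P/\lambda)\|\lesssim\sum_{2^k\gtrsim\lambda}2^{-k}\lambda^{-d/2}(\log\lambda)^{5/2}\delta(z)^{-1/2}\langle z\rangle^{\frac{d-1}{4}}\|v(\lambda)\|_{\ell^{2q}}.
\]
The geometric series is dominated by its first term, $2^{-k}\approx\lambda^{-1}$, which gives the claimed $\lambda^{-d/2-1}$. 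There is one point to check: the Lemma bounds $\chi(P/2^k)\mathcal K\chi_-$ with the plain half resolvent $|{-\Delta_g}-z|^{-1/2}$ on the left and $(-\Delta_g-z)^{-1/2}$ on the right. These differ by a unitary spectral phase, which commutes with the cutoffs, so the estimates transfer to either ordering.

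For \eqref{eq:chi_+_chi_+}, I expand both sides dyadically and sum $\mathbf E\|\chi(P/2^\ell)\mathcal K\chi(P/2^k)\|$ over $2^k,2^\ell\gtrsim\lambda$. For $k\ge\ell$, \eqref{eq:chi_l_chi_k} gives
\[
2^{-k}\lambda^{-d/2}2^{\ell(d/2-1)}(\log\lambda)^{5/2}\|v(\lambda)\|_{\ell^{2q}}.
\]
The case $\ell>k$ follows by taking adjoints and swapping roles, since the estimates are symmetric under $z\mapsto\bar z$. This is the \emph{main obstacle}: summed naively, the factor $2^{\ell(d/2-1)}$ grows for $d\ge3$. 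Summing over $k\ge\ell$ first gives $2^{\ell(d/2-2)}\lambda^{-d/2}$. This is summable in $\ell$, with sum $\approx\lambda^{-2}$, only when $d<4$, so for $d\ge4$ the bookkeeping in Lemma~\ref{lem:asymmetric_Birman-Schwinger} must be redone. Step 3 of that lemma is where $q\le q_c$ enters. I would go back to Theorem~\ref{thm:abstract2}(2) and put the $L^{p'}$ norm on the \emph{lower} frequency $2^\ell$ and the $L^\infty$ norm on the higher one $2^k$. Bernstein with exponent $\frac d2-\frac d{p'}-1$ on the $2^\ell$ piece and $\frac d2-1$ on the $2^k$ piece, combined with the prefactor $2^{-kd/p}(2^k/\lambda)^{d/p-d/2q}$, then gives
\[
2^{-\ell}2^{-(k-\ell)(1-\frac d{2q}\cdots)}\lambda^{-d/2}2^{\,\cdots}.
\]
The exponents telescope to a bound decaying in both $k$ and $\ell$, of the form $\lambda^{-d/2}2^{(k+\ell)(d/2-1)-\dots}$. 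Its worst term, at $2^k\approx2^\ell\approx\lambda$, is $\lambda^{-2}$, provided the relevant exponent is negative. The restriction $q\le q_c$ (equivalently $p'\ge\frac{2(d+1)}{d-1}$, so $\nu(p')$ is in the second regime) should be exactly what makes that exponent negative. The plan is therefore to optimise the choice of which factor carries $L^{p'}$ and to verify this sign condition. Once the double sum is geometric in $|k-\ell|$ and in $\max(k,\ell)$ starting from $2^{\max}\approx\lambda$, \eqref{eq:chi_+_chi_+} follows.
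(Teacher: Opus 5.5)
Your argument for \eqref{eq:chi_+_chi_-} is the paper's: decompose $\chi_+(P/\lambda)$ dyadically and sum \eqref{eq:chi_k_chi_-}. Your diagnosis for \eqref{eq:chi_+_chi_+} is also correct. The paper's proof likewise just sums \eqref{eq:chi_l_chi_k} over $\ell\le k$. Omitting the common factor $(\log\lambda)^{5/2}\|v(\lambda)\|_{\ell^{2q}}$, this gives $\lambda^{-d/2}\sum_{2^k\ge 8\lambda}2^{-k}\sum_{8\lambda\le 2^\ell\le 2^k}2^{\ell(\frac d2-1)}\approx\lambda^{-d/2}\sum_{2^k\ge 8\lambda}2^{k(\frac d2-2)}$ for $d\ge 3$, so that argument only closes for $d\le 3$.

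Your proposed repair, however, cannot work. In Theorem~\ref{thm:abstract2}(2) the scale must be $r=2^{-\max(k,\ell)}$. With $r'=\lambda^{-1}$ the prefactor is $r^{d/p}(r'/r)^{\frac dp-\frac d{2q}}=r^{\frac d{2q}}\lambda^{-\frac d2}$. By \eqref{eq:resolvent_chi_k}, a block at frequency $2^a$ costs $2^{a(\frac d{2q}-1)}$ in $L^{p'}$ and $2^{a(\frac d2-1)}$ in $L^\infty$. Putting $L^{p'}$ on the lower frequency gives $\lambda^{-\frac d2}2^{\ell(\frac d{2q}-1)}2^{k(\frac d2-1-\frac d{2q})}$. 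On the diagonal $k=\ell$, \emph{both} placements give $\lambda^{-\frac d2}2^{k(\frac d2-2)}\ge 8^{\frac d2-2}\lambda^{-2}$ once $d\ge 4$. There are infinitely many diagonal blocks, so the series diverges however you distribute the norms.

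Nothing here involves $q\le q_c$ or $\nu(p')$. The $q$-dependence cancels exactly: the power of $2^k$ in \eqref{eq:chi_l_chi_k} is $-1$ for every $q$. So the sign condition you expect to verify is false. The obstruction is the growth $2^{a(\frac d2-1)}$ of the $L^\infty$ norm of a high-frequency half-resolvent block, which the factor $\lambda^{-d/2}$ from Theorem~\ref{thm:abstract2} cannot offset once $2^a\gg\lambda$.

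The missing observation is that \eqref{eq:chi_+_chi_+} needs no randomisation gain at all. Since $0\le\chi_+\le 1$, $\supp\chi_+\subset(4,\infty)$ and $|z|\le\lambda^2$, the spectral theorem gives $\|\chi_+(P/\lambda)(-\Delta_g-z)^{-1/2}\|\le\sup_{\mu>4\lambda}(\mu^2-\lambda^2)^{-1/2}\le\lambda^{-1}/\sqrt{15}$. The same holds with $|{-\Delta_g}-z|^{-1/2}$, hence $\|\chi_+(P/\lambda)\mathcal{K}_{\omega,\lambda}(z)\chi_+(P/\lambda)\|\le\frac{1}{15}\lambda^{-2}\|V_{\omega,\lambda}\|_{L^\infty(M)}$. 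By bounded overlap and Dudley's inequality, exactly as in the proof of Lemma~\ref{lem:Linfty_norm}, $\mathbf{E}\|V_{\omega,\lambda}\|_{L^\infty(M)}\lesssim\sqrt{\log\lambda}\,\|v(\lambda)\|_{\ell^\infty}\le\sqrt{\log\lambda}\,\|v(\lambda)\|_{\ell^{2q}}$. This proves \eqref{eq:chi_+_chi_+} in every dimension, with a better logarithm. Equivalently, bound each dyadic block by $2^{-k-\ell}\|V_{\omega,\lambda}\|_{L^\infty(M)}$; above frequency $\lambda$, Theorem~\ref{thm:abstract2} never beats this.

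Finally, be aware that your proof of \eqref{eq:chi_+_chi_-}, like the paper's, rests entirely on \eqref{eq:chi_k_chi_-}, which is not reliable for small $|z|$. It uses \eqref{eq:resolvent_chi_-} with $p'=\infty$, which asserts a bound $\lesssim 1$ at $z=-1$. But for $\chi_-(0)=1$ and $d\ge 3$, Weyl's law gives $\|\,|{-\Delta_g}+1|^{-1/2}\chi_-(P/\lambda)\|_{L^2\to L^\infty}\gtrsim\lambda^{\frac{d-2}{2}}$.

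One can also test directly on the constant function, say on a flat torus with $\psi_j$ rescaled copies of a fixed bump, $v_j(\lambda)\equiv 1$, $N(\lambda)\approx\lambda^d$ and $\chi_-+\chi_+=1$. Then at $z=-1$ the left side of \eqref{eq:chi_+_chi_-} is $\gtrsim\lambda^{-1}$, while the right side is $\approx\lambda^{-\frac d2-1+\frac d{2q}}(\log\lambda)^{5/2}$. So that estimate cannot hold as stated when $q>1$.
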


\begin{proof}
Let $\chi\in C_c^{\infty}(\R)$ be such that $\supp\chi\subset (1/4,4)$ and $\chi=1$ on $(1/2,2)$.
Then 
\begin{align*}
\mathbf{E}\|\chi_+(P/\lambda)\mathcal{K}_{\omega,\lambda}(z)\chi_-(P/\lambda)\|\leq \sum_{2^{k}\geq 8\lambda}\mathbf{E}\|\chi(P/2^k)\mathcal{K}_{\omega,\lambda}(z)\chi_-(P/\lambda)\|,
\end{align*}    
and \eqref{eq:chi_+_chi_-} follows from  \eqref{eq:chi_k_chi_-} since $|z|\leq \lambda^2\leq 2^{2k-6}$ in the range of summation.
Similarly, 
\begin{align*}
\mathbf{E}\|\chi_+(P/\lambda)\mathcal{K}_{\omega,\lambda}(z)\chi_+(P/\lambda)\|\leq \sum_{2^{k}\geq 8\lambda}\sum_{2^{\ell}\geq 8\lambda}\mathbf{E}\|\chi(P/2^\ell)\mathcal{K}_{\omega,\lambda}(z)\chi(P/2^k)\|,    
\end{align*}
and \eqref{eq:chi_l_chi_k} implies that the part of the sum with $\ell\leq k$ is bounded by the right-hand side of \eqref{eq:chi_+_chi_+}. Replacing $z$ by $\overline{z}$ in \eqref{eq:chi_+_chi_-} and using the bound for the adjoint, the same argument applies to the part of the sum with $\ell>k$.
\end{proof}

\begin{proof}[Proof of Proposition \ref{prop:BS_small_q}]
Let $\chi_-,\chi_+$ be such that $\chi_-+\chi_+=1$ and with support as in Corollary \ref{cor:chi_+chi_-and_chi_+chi_+}. Then by \eqref{eq:chi_-chi_-}, \eqref{eq:chi_+_chi_-} and its dual (with $z$ replaced by $\overline{z}$), and \eqref{eq:chi_+_chi_+},
\begin{align*}
\mathbf{E}\|\mathcal{K}_{\omega,\lambda}(z)\|
&\leq \sum_{\sigma_1,\sigma_2\in\{+,-\}}\mathbf{E}\|\chi_{\sigma_1}(P/\lambda)\mathcal{K}_{\omega,\lambda}(z)\chi_{\sigma_2}(P/\lambda)\|
\\
 &\lesssim
 (\lambda^{-\frac{d}{p}}  \delta(z)^{-1} \langle z\rangle^{\frac{d}{2p}-\frac{\mu(q)}{2}}
 +\lambda^{-\frac{d}{2}-1}\delta(z)^{-\frac{1}{2}} \langle z\rangle^{\frac{d-1}{4}}+\lambda^{-2})
 (\log\lambda)^{5/2}\|v(\lambda)\|_{\ell^{2q}}.
\end{align*}
Using $\delta(z)\leq \langle z\rangle^{\frac{1}{2}}\lesssim\lambda$ and $q\geq \frac{d}{2}$, we may bound the second and third term in the last expression by the first.
This proves \eqref{eq:EnormBSexp}, which 
together with Lemma~\ref{lem:tailbound} yields~\eqref{eq:EnormBS}.
\end{proof}

\begin{remark}
 The above proof is the only place where we used the assumption $q\geq d/2$.   
\end{remark}

\subsection{Discretisation and union bound}
We cannot directly apply Proposition \ref{prop:BS_small_q} to prove Theorem \ref{thm:main_intro}.
The problem is that the events on which the corresponding inequalities hold depend on the spectral parameter \(z\). 
Since \(z\) ranges over a continuum, one cannot directly apply a union bound over all \(z\). To overcome this, we will introduce a discretisation scheme.


To carry out the discretisation, let
\begin{align}
    E(z,q,\lambda,K)&:=\{\omega:\|\mathcal{K}_{\omega,\lambda}(z)\|\lesssim K A_{\lambda}B(z)^{-1}\},\label{def:E(z)}
\end{align}
where $A_{\lambda}$, $B(z)$ were defined in \eqref{def:Alambda}, \eqref{def:B(z)}, respectively.
In what follows, we will abbreviate \(E(z,q,\lambda,K)\) by \(E(z)\).
The result of Proposition \ref{prop:BS_small_q} can then be stated as
\begin{align}\label{eq:P(E(z))}
  \mathbf{P}(E(z))\geq 1-\exp(-K^2).
\end{align}
The aim of this section is to prove the following proposition, which is a uniform version of the above bound. For technical reasons, we need to restrict $z$ to the complement of an arbitrarily small neighborhood of $\spec(-\Delta_g)$.

\begin{proposition}\label{prop:uniform-on-truncated-region}
Let \(N\geq 1\) be fixed. Then there exists a constant \(C_N>0\) such that
for every \(K\geq 1\),
\begin{align}\label{eq:uniform-on-truncated-region}
\mathbf P\Bigl(
\sup_{\substack{|z|\leq \lambda^2\\ d(z)\geq \lambda^{-N}}}
B(z)\,\|\mathcal K_{\omega,\lambda}(z)\|
\leq
C_NK\,A_\lambda (\log\lambda)^2
\Bigr)
\geq
1-\exp(-K^2).
\end{align}
\end{proposition}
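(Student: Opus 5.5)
Our plan is a net argument in $z$: apply Proposition~\ref{prop:BS_small_q} on a finite grid of spectral parameters, take a union bound over the grid, and pass to the full region $\{|z|\le\lambda^2,\ d(z)\ge\lambda^{-N}\}$ using a deterministic Lipschitz-type estimate in $z$, valid for every realisation $\omega$.

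\textbf{Grid and union bound.} Let $\mathcal{Z}$ be a grid of mesh $\eta:=\lambda^{-L}$ (with $L=L(N)$ to be fixed) in $\{|z|\le\lambda^2,\ d(z)\ge\lambda^{-N}/2\}$. Then $\#\mathcal{Z}\lesssim\lambda^{2L+4}$, so $\log\#\mathcal{Z}\lesssim_N\log\lambda$. For each $z_0\in\mathcal{Z}$ apply \eqref{eq:EnormBS} with $K$ replaced by $K':=K+C\sqrt{\log\#\mathcal{Z}}$. The probability that some grid event $E_0(z_0,q,\lambda,K')$ fails is at most $\#\mathcal{Z}\,e^{-K'^2}\le e^{-K^2}$ once $C$ is large. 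On the intersection of these events,
\[
B(z_0)\|\mathcal K_{\omega,\lambda}(z_0)\|\lesssim (K+\sqrt{\log\lambda})A_\lambda\le 2K(\log\lambda)^{1/2}A_\lambda
\]
for all $z_0\in\mathcal{Z}$. This costs only $(\log\lambda)^{1/2}$, well within the allowed $(\log\lambda)^2$.

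\textbf{Deterministic continuity.} For any $z$ in the region, pick $z_0\in\mathcal Z$ with $|z-z_0|\le\eta$; then $d(z_0)\ge d(z)-\eta\ge\lambda^{-N}/2$. Write
\[
\mathcal K(z)-\mathcal K(z_0)=\bigl(|H_0-z|^{-1/2}-|H_0-z_0|^{-1/2}\bigr)V(H_0-z)^{-1/2}+|H_0-z_0|^{-1/2}V\bigl((H_0-z)^{-1/2}-(H_0-z_0)^{-1/2}\bigr),
\]
where $H_0=-\Delta_g$. We bound every term crudely in the $L^2$ operator norm, using $\|V_{\omega,\lambda}\|_{L^\infty}\lesssim\max_j|\omega_j|\,\|v\|_{\ell^\infty}$. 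By the spectral theorem, $\bigl||t-z|^{-1/2}-|t-z_0|^{-1/2}\bigr|\lesssim|z-z_0|\,d(z)^{-3/2}$ for $t\ge0$ when $|z-z_0|\le d(z)/2$, and the same holds for the analytic branch $(t-z)^{-1/2}$. Hence
\[
\|\mathcal K(z)-\mathcal K(z_0)\|\lesssim\eta\,\lambda^{2N}\,\|V_\omega\|_\infty .
\]
The variation of the weight is handled similarly: $|B(z)-B(z_0)|\lesssim\eta\,\lambda^{C}$, and $B(z)\le\lambda^{C}$, because $\delta(z)$ is Lipschitz up to $\lambda^{C}$ factors on the region, given $d(z)\ge\lambda^{-N}$ and $|z|^{1/2}$. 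The term $\min\{d,|z|^{1/2}\}$ near $z=0$ needs care; note $d(0)=0$, so such $z$ are excluded and $|z|\ge\lambda^{-N}$ there. We need the error to be at most $A_\lambda$. Lower-bound $A_\lambda\ge\lambda^{-C}\|v\|_{\ell^\infty}$ via $\|v\|_{\ell^{2q}}\ge\|v\|_{\ell^\infty}$, and upper-bound $\max_j|\omega_j|$: it is $1$ in the Bernoulli case, and in the Gaussian case it is at most $K+C\sqrt{\log\lambda}$ on an event of probability $\ge1-e^{-K^2}$, since $N(\lambda)\lesssim\lambda^d$. Choosing $L$ large depending on $N$, $d$, and $q$ makes the error $\lesssim K(\log\lambda)^{1/2}A_\lambda$. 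Combining the two events (adjusting $K$ by a constant factor to restore probability $1-e^{-K^2}$) gives \eqref{eq:uniform-on-truncated-region}.

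\textbf{Main obstacle.} The delicate part is the continuity step. It requires the polynomial-in-$\lambda$ Lipschitz bounds for $\delta(z)$ and for the resolvent square roots, which is exactly why $d(z)\ge\lambda^{-N}$ is imposed. It also requires controlling $\max_j|\omega_j|$ in the Gaussian case, since $\|V_\omega\|_\infty$ is not bounded deterministically. We expect both to cost at most logarithmic factors, comfortably inside the stated $(\log\lambda)^2$.
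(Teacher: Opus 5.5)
Your proof is correct, but it organises the discretisation differently from the paper. The paper splits the region into the dyadic level sets $\Omega_j=\{2^j\le B(z)<2^{j+1}\}$ and places on each a net of mesh $c2^j$, adapted to $d(z)\gtrsim B(z)$. It controls the perturbation via Lemma~\ref{lem:BS-perturb-z} with $\rho\approx 2^j$ and the $q=\infty$ case of Lemma~\ref{lem:Linfty_norm}, then takes a second union bound over the $O_N(\log\lambda)$ nonempty shells. Since $B$ is constant up to a factor $2$ on each $\Omega_j$, the paper never needs a Lipschitz bound for the weight $B(z)$. You instead use one uniform grid of mesh $\lambda^{-L}$. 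You pay for the variation of $B$ with a crude polynomial Lipschitz bound for $\delta$, correctly using $0\in\spec(-\Delta_g)$ to get $|z|\ge d(z)\ge\lambda^{-N}$, and you absorb all polynomial losses through $A_\lambda\ge\lambda^{-C}\|v(\lambda)\|_{\ell^\infty}$. This needs a single union bound and loses only $(\log\lambda)^{1/2}$ rather than $(\log\lambda)^2$.

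That last comparison should stay explicit. With mesh $c2^j$, the paper's perturbation error is only $\lesssim K2^{-j}(\log\lambda)^{1/2}\|v(\lambda)\|_{\ell^\infty}$. The inequality the paper uses to dominate this by $K2^{-j}A_\lambda$ drops the factor $\lambda^{-d(q+1)/(2q)}$ contained in \eqref{def:Alambda}. So the paper's nets must also be refined by a polynomial factor in $\lambda$, which is harmless after $K\mapsto C_NK\log\lambda$ and is exactly what your choice of $L$ does.

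One correction to your continuity step: the estimate does not hold for the principal analytic branch of $(t-z)^{-1/2}$. That branch jumps across the cut $z\in[t,\infty)$ (for $t=\lambda_0^2=0$, the whole positive axis) when $z$ and $z_0$ lie on opposite sides of $\R$. The Birman--Schwinger principle in the form used here requires the convention $(H_0-z)^{-1/2}:=(H_0-z)^{-1}|H_0-z|^{1/2}$, i.e.\ the symbol $\overline{(t-z)}\,|t-z|^{-3/2}$. This is smooth in $z$ away from $t$, so your bound $\lesssim|z-z_0|\,d(z)^{-3/2}$ holds for it, with $t$ ranging over $\spec(-\Delta_g)$ rather than all of $[0,\infty)$.
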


As a preliminary step, we record the following simple Lipschitz bound for the Birman-Schwinger operator.

\begin{lemma}\label{lem:BS-perturb-z}
  For any \(z,z'\in \C\setminus \spec(-\Delta_g)\) with $d(z),d(z')\geq  \rho>0$, we have
  \[
    \|\mathcal{K}_{\omega,\lambda}(z)-\mathcal{K}_{\omega,\lambda}(z')\|
    \lesssim |z-z'| \, \rho^{-2}\, \|V_{\omega,\lambda}\|_{L^{\infty}(M)}.
  \]
\end{lemma}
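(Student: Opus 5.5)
The plan is to write $R_0(z):=(-\Delta_g-z)^{-1}$ and express both half-resolvents through the spectral theorem. Setting $a(z):=|-\Delta_g-z|^{-1/2}$ and $b(z):=(-\Delta_g-z)^{-1/2}$, both are functions of $-\Delta_g$, namely $a_z(\tau)=|\tau-z|^{-1/2}$ and $b_z(\tau)=(\tau-z)^{-1/2}$ for $\tau\in\spec(-\Delta_g)$, with some fixed branch of the square root. We then split the difference as
\[
\mathcal{K}_{\omega,\lambda}(z)-\mathcal{K}_{\omega,\lambda}(z')
=\bigl(a(z)-a(z')\bigr)V_{\omega,\lambda}b(z)+a(z')V_{\omega,\lambda}\bigl(b(z)-b(z')\bigr),
\]
and estimate each term in the $L^2\to L^2$ operator norm. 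Multiplication by $V_{\omega,\lambda}$ has norm $\|V_{\omega,\lambda}\|_{L^\infty}$, and by the spectral theorem $\|a(z')\|,\|b(z)\|\le \rho^{-1/2}$.

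The key step is a pointwise Lipschitz bound for the scalar functions. For $\tau$ in the spectrum we have $|\tau-z|,|\tau-z'|\ge\rho$, and $||\tau-z|-|\tau-z'||\le|z-z'|$. The identity $x^{-1/2}-y^{-1/2}=(y-x)/(\sqrt{xy}(\sqrt x+\sqrt y))$ with $x,y\ge\rho$ then gives $|a_z(\tau)-a_{z'}(\tau)|\le |z-z'|\rho^{-3/2}/2$. For $b_z$, we write $b_z-b_{z'}=(\tau-z')-(\tau-z)$ divided by $\sqrt{\tau-z}\sqrt{\tau-z'}(\sqrt{\tau-z}+\sqrt{\tau-z'})$.

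The main obstacle is that the sum $\sqrt{\tau-z}+\sqrt{\tau-z'}$ can be small when the two square roots nearly cancel, for instance across the branch cut. We handle this by cases.
\begin{itemize}
\item If $|z-z'|\le\rho/2$, then $\tau-z$ and $\tau-z'$ lie in a disc avoiding $0$. Provided the branch cut is chosen away from the segment (if the paper's convention puts it on the negative real axis, we use the phase $\arg$ continuously there), the mean value theorem gives $|b_z-b_{z'}|\le \sup|\tfrac12 w^{-3/2}|\,|z-z'|\lesssim\rho^{-3/2}|z-z'|$.
\item If $|z-z'|>\rho/2$, the trivial bound $|b_z|+|b_{z'}|\le2\rho^{-1/2}\le 4\rho^{-3/2}|z-z'|$ suffices. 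The same trivial bound also covers the $a$-term without any cancellation issues.
\end{itemize}

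Combining these, each term is bounded by $\lesssim|z-z'|\rho^{-3/2}\cdot\rho^{-1/2}\|V_{\omega,\lambda}\|_{L^\infty}=|z-z'|\rho^{-2}\|V_{\omega,\lambda}\|_{L^\infty}$, which is the claimed estimate.
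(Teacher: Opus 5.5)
Your splitting, the bounds $\|a(z')\|,\|b(z)\|\le\rho^{-1/2}$, the estimate $|a_z(\tau)-a_{z'}(\tau)|\le\frac12\rho^{-3/2}|z-z'|$ and the trivial case $|z-z'|>\rho/2$ are the same as in the paper and are fine. You are also right that the phase of $b_z$ is the real difficulty. The paper's proof in fact passes over it: it identifies $\|R(z)-R(z')\|$ with $\sup_\mu\bigl||\mu-z|^{-1/2}-|\mu-z'|^{-1/2}\bigr|$, which only controls $\||R(z)|-|R(z')|\|$.

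But your way around the phase does not work. The operator $b(z)$ is defined once, with one fixed branch, so you cannot move the cut ``away from the segment'' depending on $\tau,z,z'$. In fact, with any genuine square-root branch the lemma is false. For the principal branch, take $E>0$ with $E\notin\spec(-\Delta_g)$, set $z=E+\I\epsilon$, $z'=\bar z$, and let $\rho:=d(E)\le d(z),d(z')$. Since $|\tau-z|=|\tau-z'|$, one has $a(z)=a(z')$, hence $\mathcal{K}_{\omega,\lambda}(z)-\mathcal{K}_{\omega,\lambda}(z')=a(z)V_{\omega,\lambda}\bigl(b(z)-b(z')\bigr)$. At the eigenvalue $\tau=\lambda_0^2=0$ one has $b_z(0)-b_{z'}(0)\to2\I E^{-1/2}$ as $\epsilon\to0$. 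Applied to the constant eigenfunction $\mathbf{1}$, the difference therefore tends to $2\I E^{-1/2}a(E)V_{\omega,\lambda}\neq0$ whenever $V_{\omega,\lambda}\not\equiv0$. Meanwhile the claimed bound $2\epsilon\rho^{-2}\|V_{\omega,\lambda}\|_{L^\infty}$ tends to $0$. The same happens for every fixed branch, because no branch of $w\mapsto(-w)^{-1/2}$ is continuous on a circle $|w|=r$ with $r\notin\spec(-\Delta_g)$.

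The missing idea is the convention under which the Birman--Schwinger principle used in the proof of Theorem~\ref{thm:main_intro} is valid. That principle needs $|-\Delta_g-z|^{-1/2}(-\Delta_g-z)^{-1/2}=(-\Delta_g-z)^{-1}$, i.e.\ $(-\Delta_g-z)^{-1/2}:=|-\Delta_g-z|^{1/2}(-\Delta_g-z)^{-1}$. Then $b_z(\tau)=|\tau-z|^{1/2}/(\tau-z)$, which is single-valued and smooth in $z\neq\tau$, with no cut at all. Put $w=\tau-z$ and $w'=\tau-z'$, so that $|w|,|w'|\ge\rho$ and $|w-w'|=|z-z'|$. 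The identity
\[
\frac{|w|^{1/2}}{w}-\frac{|w'|^{1/2}}{w'}
=\frac{|w|^{1/2}(w'-w)+\bigl(|w|^{1/2}-|w'|^{1/2}\bigr)w}{w\,w'},
\]
combined with $\bigl||w|^{1/2}-|w'|^{1/2}\bigr|\le|w-w'|/(|w|^{1/2}+|w'|^{1/2})$, gives $|b_z(\tau)-b_{z'}(\tau)|\le2\rho^{-3/2}|z-z'|$ for all admissible $z,z'$. No case distinction is needed, and inserting this into your splitting completes the proof.
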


\begin{proof}
  Set
\[
R(w):=(-\Delta_g-w)^{-1/2},
\qquad
w\in \C\setminus \spec(-\Delta_g).
\]
Then
\[
\mathcal{K}_{\omega,\lambda}(w)=|R(w)|V_{\omega,\lambda}R(w),
\]
and hence
\[
\mathcal{K}_{\omega,\lambda}(z)-\mathcal{K}_{\omega,\lambda}(z')
=
\bigl(|R(z)|-|R(z')|\bigr)V_{\omega,\lambda}R(z)
+
|R(z')|V_{\omega,\lambda}\bigl(R(z)-R(z')\bigr).
\]
Therefore, taking the operator norm,
\begin{align*}
\|\mathcal{K}_{\omega,\lambda}(z)-\mathcal{K}_{\omega,\lambda}(z')\|
&\leq
\|R(z)-R(z')\|\,\|V_{\omega,\lambda}\|\,\|R(z)\| \\
&\qquad
+\|R(z')\|\,\|V_{\omega,\lambda}\|\,\|R(z)-R(z')\|,
\end{align*}
where $\|V_{\omega,\lambda}\|=\|V_{\omega,\lambda}\|_{L^{\infty}(M)}$.
It remains to estimate \(\|R(w)\|\) and \(\|R(z)-R(z')\|\).

By the spectral theorem,
\begin{align}\label{eq:trivial bound R(w)}
\|R(w)\|
=
\sup_{\mu\in \spec(-\Delta_g)} |\mu-w|^{-1/2}
=
d(w)^{-1/2}.
\end{align}

For the difference, define for \(\mu\in \spec(-\Delta_g)\)
\[
f_w(\mu):=|\mu-w|^{-1/2}.
\]
Then
\[
\|R(z)-R(z')\|
=
\sup_{\mu\in \spec(-\Delta_g)} |f_z(\mu)-f_{z'}(\mu)|.
\]
Using 
\begin{align*}
\bigl||a|^{-1/2}-|b|^{-1/2}\bigr|
&=
\frac{\bigl||b|^{1/2}-|a|^{1/2}\bigr|}{|a|^{1/2}|b|^{1/2}}
=
\frac{|\,|b|-|a|\,|}{|a|^{1/2}|b|^{1/2}(|a|^{1/2}+|b|^{1/2})}\\
&\leq
\frac{|a-b|}{|a|^{1/2}|b|^{1/2}(|a|^{1/2}+|b|^{1/2})},
\end{align*}
with \(a=\mu-z\) and \(b=\mu-z'\), we obtain
\[
|f_z(\mu)-f_{z'}(\mu)|
\leq
\frac{|z-z'|}{|\mu-z|^{1/2}|\mu-z'|^{1/2}
\bigl(|\mu-z|^{1/2}+|\mu-z'|^{1/2}\bigr)}.
\]
Since
\[
\frac{1}{x^{1/2}y^{1/2}(x^{1/2}+y^{1/2})}
\leq
\min(x^{-1/2}y^{-1},x^{-1}y^{-1/2})
\leq
x^{-3/2}+y^{-3/2},
\]
for all \(x,y>0\), it follows that
\[
|f_z(\mu)-f_{z'}(\mu)|
\lesssim
|z-z'|\bigl(|\mu-z|^{-3/2}+|\mu-z'|^{-3/2}\bigr).
\]
Taking the supremum over \(\mu\in \spec(-\Delta_g)\) yields
\[
\|R(z)-R(z')\|
\lesssim
|z-z'|\bigl(d(z)^{-3/2}+d(z')^{-3/2}\bigr).
\]

Substituting these bounds into the previous estimate gives
\begin{align*}
\|\mathcal{K}_{\omega,\lambda}(z)-\mathcal{K}_{\omega,\lambda}(z')\|
&\lesssim
|z-z'|\,\|V_{\omega,\lambda}\|
\bigl(d(z)^{-3/2}+d(z')^{-3/2}\bigr)
\bigl(d(z)^{-1/2}+d(z')^{-1/2}\bigr).
\end{align*}
If \(d(z),d(z')\geq \rho\), then
\[
\|\mathcal{K}_{\omega,\lambda}(z)-\mathcal{K}_{\omega,\lambda}(z')\|
\lesssim
|z-z'|\,\rho^{-2}\,\|V_{\omega,\lambda}\|_{L^{\infty}(M)}.
\]
This completes the proof.
\end{proof}

We decompose the set \(\{z\in \C\setminus \spec(-\Delta_g):|z|\leq \lambda^2\}\)
according to the size of \(B(z)\). More precisely, for $j\in\Z$, define
\begin{align*}
\Omega_j&:=\Bigl\{z\in \C\setminus\spec(-\Delta_g): |z|\leq \lambda^2,\ 
2^{j}\leq B(z)<2^{j+1}\Bigr\}.
\end{align*}
Then,
$$
\{z\in \C\setminus \spec(-\Delta_g):|z|\leq \lambda^2\} = \bigcup_{j\in\Z}\Omega_j
$$
and on each \(\Omega_j\), by definition\eqref{def:E(z)},
\begin{align}\label{eq:EnormBS_j}
    \|\mathcal{K}_{\omega,\lambda}(z)\|
\lesssim
K\,2^{-j}A_\lambda,
\qquad z\in \Omega_j,\quad \omega\in E(z).
\end{align}

\begin{corollary}\label{cor:tail_bound_K(z)-K(z')}
Let \(j\in \mathbb Z\) be such that \(\Omega_j\neq \emptyset\). Then there exist $C,c>0$ such that for any $K\geq 1$, there is an event $E_j'$ of probability at least $1-\exp(-K^2)$ such that for all $z\in\Omega_j$, $|z-z'|\leq c2^{j}$ and $\omega\in E_j'$, we have
  \begin{align}\label{eq:tail_bound_K(z)-K(z')}
    \|\mathcal{K}_{\omega,\lambda}(z)-\mathcal{K}_{\omega,\lambda}(z')\|\leq CK2^{-j}\sqrt{\log\lambda} \|v(\lambda)\|_{\ell^{\infty}}.
  \end{align}
\end{corollary}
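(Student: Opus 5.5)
The plan is to combine the deterministic Lipschitz bound of Lemma~\ref{lem:BS-perturb-z} with a lower bound on $d(z)$ on $\Omega_j$ and a high-probability bound on $\|V_{\omega,\lambda}\|_{L^\infty(M)}$. The event $E_j'$ will in fact not depend on $j$ or $z$: it only controls $\max_i|\omega_i|$.

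\emph{Step 1: $d(z)\gtrsim 2^j$ on $\Omega_j$.} I will show $B(z)\leq d(z)$ for all $z$ with $|z|\leq\lambda^2$. By \eqref{def:delta(z)}, $\delta(z)\leq d(z)/\log\langle z\rangle\leq d(z)$. In \eqref{def:B(z)} the exponent of $\langle z\rangle$ is $\frac{\mu(q)}{2}-\frac{d(q+1)}{4q}$. Since $\mu(q)\leq 1$ and $\frac{d(q+1)}{4q}\geq \frac d4\geq\frac12$ for $d\geq2$, this exponent is nonpositive. As $\langle z\rangle\geq 2$, we get $B(z)\leq\delta(z)\leq d(z)$. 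Hence $z\in\Omega_j$ implies $d(z)\geq 2^j$.

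\emph{Step 2: apply the Lipschitz bound.} Take $c=1/2$. If $|z-z'|\leq c2^j$, then $d(z')\geq d(z)-|z-z'|\geq 2^{j-1}>0$, so $z'\notin\spec(-\Delta_g)$. Lemma~\ref{lem:BS-perturb-z} with $\rho=2^{j-1}$ then gives
\[
\|\mathcal K_{\omega,\lambda}(z)-\mathcal K_{\omega,\lambda}(z')\|\lesssim c2^j\cdot 2^{-2j+2}\,\|V_{\omega,\lambda}\|_{L^\infty(M)}\lesssim 2^{-j}\|V_{\omega,\lambda}\|_{L^\infty(M)}.
\]
This holds deterministically, for every $\omega$. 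No constraint on $|z'|$ is needed, since the lemma only uses distances to the spectrum.

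\emph{Step 3: the probabilistic input.} Since $0\leq\psi_i\leq1$ and the supports have bounded overlap, $\|V_{\omega,\lambda}\|_{L^\infty}\leq C\max_i|\omega_i|\,\|v(\lambda)\|_{\ell^\infty}$. In the Bernoulli case $\max_i|\omega_i|=1$, so the bound holds surely. In the Gaussian case a union bound over the $N(\lambda)\leq C\lambda^d$ variables gives
\[
\mathbf P\bigl(\max_i|\omega_i|>t\bigr)\leq 2C\lambda^d e^{-t^2/2}.
\]
Choosing $t=C'K\sqrt{\log\lambda}$ with $C'$ large (using $K\geq1$ and $\lambda\geq2$) makes this at most $\exp(-K^2)$. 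Alternatively, one can use Dudley's inequality \eqref{eq:dudley} to get $\mathbf E\max_i|\omega_i|\lesssim\sqrt{\log\lambda}$, and then Lemma~\ref{lem:tailbound} applied to the Gaussian vector $(\omega_i)$ in $\ell^\infty$. Let $E_j'$ be this event. On it, Steps 1–2 give \eqref{eq:tail_bound_K(z)-K(z')}.

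There is no serious obstacle. The only point to verify carefully is the sign of the exponent in Step 1, i.e.\ that $B(z)\leq d(z)$; this is where $d\geq2$ and $\mu(q)\leq1$ enter.
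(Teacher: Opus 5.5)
Your proof is correct and follows the paper's route: the paper likewise uses $d(z)\geq B(z)\geq 2^j$ on $\Omega_j$, applies Lemma~\ref{lem:BS-perturb-z} with $\rho\approx 2^j$, and controls $\|V_{\omega,\lambda}\|_{L^\infty(M)}$ via the $q=\infty$ case of Lemma~\ref{lem:Linfty_norm}, which is Dudley's inequality plus Lemma~\ref{lem:tailbound}. One small correction to Step 1: $\log\langle z\rangle\geq\log 2$ rather than $\geq 1$, so you only get $B(z)\leq d(z)/\log 2$, but with $c=1/2$ you still have $d(z')\geq(\log 2-\tfrac12)2^j>0$, so the argument goes through with a slightly worse constant.
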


\begin{proof}
By Lemma \ref{lem:BS-perturb-z}, the map
\[
\Omega_j\ni z\mapsto \mathcal{K}_{\omega,\lambda}(z)
\]
is stable under perturbations of size \(c2^{j}\) for $c$ sufficiently small. 
More precisely, since 
\begin{align*}
d(z)\geq  B(z),\quad
d(z')\geq d(z)-|z-z'|,
\end{align*}
Lemma~\ref{lem:BS-perturb-z} with $\rho=2^j$ yields
\begin{align}\label{eq:K(z)-K(z')_for_z_in_Omega_j}
   z\in\Omega_j,\quad |z-z'|\leq c2^{j}\implies \|\mathcal{K}_{\omega,\lambda}(z)-\mathcal{K}_{\omega,\lambda}(z')\|\lesssim 2^{-j}\|V_{\omega,\lambda}\|_{L^{\infty}(M)}.
\end{align}
Hence, the claimed bound \eqref{eq:tail_bound_K(z)-K(z')} follows immediately from
\eqref{eq:K(z)-K(z')_for_z_in_Omega_j} and the $q=\infty$ case in Lemma \ref{lem:Linfty_norm} below. 
\end{proof}

\begin{lemma}\label{lem:Linfty_norm}
Let $1\leq q <\infty$.
There exists $C_q>0$ such that for any $K\geq 1$, 
\begin{align}
\mathbf{P}(\|V_{\omega,\lambda}\|_{L^{q}(M)}\leq C_qK\lambda^{-\frac{d}{q}}\|v(\lambda)\|_{\ell^{q}})
\geq 1-\exp(-K^2).
\end{align}
Moreover, for $q=\infty$, there exists $C$ such that
\begin{align}
\mathbf{P}(\|V_{\omega,\lambda}\|_{L^{\infty}(M)}\leq CK\sqrt{\log\lambda}\|v(\lambda)\|_{\ell^{\infty}})
\geq 1-\exp(-K^2).
\end{align}
\end{lemma}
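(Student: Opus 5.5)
The plan is to treat $\|V_{\omega,\lambda}\|_{L^q(M)}$ and $\|V_{\omega,\lambda}\|_{L^\infty(M)}$ as norms of vector-valued Rademacher or Gaussian series. We bound their expectations and then apply Lemma~\ref{lem:tailbound}. For the finite $q$ case this gives probability at least $1-\exp(-cK^2)$. Replacing $K$ by $K/\sqrt c$ and enlarging $C_q$ turns this into the stated $1-\exp(-K^2)$.

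\emph{Case $1\le q<\infty$.} First, by bounded overlap of the supports $E_j(\lambda)$ and $0\le\psi_j\le1$, we have pointwise
\[
|V_{\omega,\lambda}(x)|^q\lesssim_q \sum_j |\omega_j|^q|v_j(\lambda)|^q\psi_j(x;\lambda)^q .
\]
To see this, note that at each $x$ at most $C$ terms are nonzero, and use $|a_1+\dots+a_C|^q\le C^{q-1}\sum|a_i|^q$. Integrating and using $\mathrm{vol}(E_j)\lesssim\lambda^{-d}$ (diameter $\approx\lambda^{-1}$ together with Ahlfors regularity) gives
\[
\mathbf E\|V_{\omega,\lambda}\|_{L^q}^q\lesssim_q \lambda^{-d}\,\mathbf E|\omega_1|^q\,\|v(\lambda)\|_{\ell^q}^q .
\]
By Jensen, $\mathbf E\|V_{\omega,\lambda}\|_{L^q}\le(\mathbf E\|V_{\omega,\lambda}\|_{L^q}^q)^{1/q}\lesssim_q\lambda^{-d/q}\|v(\lambda)\|_{\ell^q}$. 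Since $V_{\omega,\lambda}=\sum_j\omega_j(v_j\psi_j)$ is a Rademacher or Gaussian series with values in the Banach space $L^q(M)$, Lemma~\ref{lem:tailbound} applies with $C=C_q\lambda^{-d/q}\|v(\lambda)\|_{\ell^q}$.

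\emph{Case $q=\infty$.} The crude bound $|V_{\omega,\lambda}(x)|\le C\max_j|\omega_j||v_j|$ loses nothing structurally. We have $\mathbf E\max_{j\le N(\lambda)}|\omega_j|\lesssim\sqrt{\log N(\lambda)}\lesssim\sqrt{\log\lambda}$ by Dudley's inequality \eqref{eq:dudley}, using $N(\lambda)\le C\lambda^d$. Hence $\mathbf E\|V_{\omega,\lambda}\|_{L^\infty}\lesssim\sqrt{\log\lambda}\,\|v(\lambda)\|_{\ell^\infty}$.

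For the tail in this case, Lemma~\ref{lem:tailbound} applies to the $L^\infty$-valued series. The sum is finite, so it lives in a finite-dimensional subspace and separability is no issue. Alternatively, one can argue directly with a union bound: $\mathbf P(\max_j|\omega_j|>t)\le N(\lambda)\,2e^{-ct^2}$. With $t=CK\sqrt{\log\lambda}$ and $K\ge1$, $\lambda\ge2$, this is at most $e^{-K^2}$ once $C$ is large. This union-bound route gives the $\sqrt{\log\lambda}$ and the exponential tail simultaneously, and it avoids any subtlety about the constant $c$ in the exponent.

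\emph{Main obstacle.} The only point needing care is bookkeeping of constants in the exponent, namely converting $e^{-cK^2}$ into $e^{-K^2}$ by rescaling $K$. In the $L^\infty$ case one also needs to absorb $\log N(\lambda)$ into $K^2\log\lambda$. Both are handled by enlarging $C_q$ and $C$, since $K\ge1$ and $\log\lambda\ge\log2$.
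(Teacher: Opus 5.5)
Your proposal is correct and follows the paper's proof essentially step for step. For $q<\infty$ you use bounded overlap with $|E_j(\lambda)|\lesssim\lambda^{-d}$, then Jensen and Lemma~\ref{lem:tailbound}; for $q=\infty$ you use Dudley's inequality with $N(\lambda)\lesssim\lambda^d$ and then Lemma~\ref{lem:tailbound}. Your explicit rescaling of $K$ to turn $e^{-cK^2}$ into $e^{-K^2}$, and your alternative union-bound argument for the $L^\infty$ tail, are both valid additions that the paper leaves implicit.
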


\begin{proof}
We first consider the case $q<\infty$.
By the bounded overlap assumption,
\[|V_{\omega,\lambda}(x)|^q
\lesssim_q
\sum_{j=1}^{N(\lambda)}
|\omega_j|^q |v_j(\lambda)|^q \mathbf{1}_{E_j(\lambda)}(x).\]
Integrating over \(M\) and using that \(|E_j(\lambda)|\lesssim \lambda^{-d}\),
\[\|V_{\omega,\lambda}\|_{L^q(M)}^q
\lesssim_q
\lambda^{-d}
\sum_{j=1}^{N(\lambda)}
|\omega_j|^q |v_j(\lambda)|^q.\]
Taking expectations and using \(\mathbf{E}|\omega_j|^q \lesssim_q 1\) for Bernoulli or Gaussian \(\omega_j\),
\[
\mathbf{E}\|V_{\omega,\lambda}\|_{L^q(M)}^q
\lesssim_q
\lambda^{-d}\sum_{j=1}^{N(\lambda)}|v_j(\lambda)|^q
=
\lambda^{-d}\|v(\lambda)\|_{\ell^q}^q.
\]
By Jensen's inequality,
\[
\mathbf{E}\|V_{\omega,\lambda}\|_{L^q(M)}\leq (\mathbf{E}\|V_{\omega,\lambda}\|_{L^q(M)}^q)^{1/q}\lesssim_q \lambda^{-d/q}\|v(\lambda)\|_{\ell^q}.
\]
Lemma \ref{lem:tailbound} then implies the claim.

Now assume $q=\infty$.
By Dudley's inequality \eqref{eq:dudley},
\begin{align*}
\mathbf{E}\|V_{\omega,\lambda}\|_{L^{q}(M)}
&\lesssim \mathbf{E}\max_{j\leq N(\lambda)}|\omega_jv_j(\lambda)|\lesssim \sqrt{\log N(\lambda)} \max_{j\leq N(\lambda)}\|\omega_jv_j(\lambda)\|_{\psi_2}.
\end{align*}
Since $N(\lambda)\lesssim\lambda^d$, it follows that
\begin{align}
\mathbf{E}\|V_{\omega,\lambda}\|_{L^{\infty}(M)}
\lesssim\sqrt{\log\lambda} \|v(\lambda)\|_{\ell^{\infty}}.
\end{align}
Again, Lemma \ref{lem:tailbound} implies the claim.
\end{proof}

\begin{remark}
In the Bernoulli case, the bounds of Lemma \ref{lem:Linfty_norm} are deterministic:
for $1\leq q<\infty$,
\[
\|V_{\omega,\lambda}\|_{L^{q}(M)}
\leq C_q \lambda^{-d/q}\|v(\lambda)\|_{\ell^{q}},
\]
while for $q=\infty$,
\[
\|V_{\omega,\lambda}\|_{L^{\infty}(M)}
\leq C\|v(\lambda)\|_{\ell^{\infty}}.
\]
Hence, only the Gaussian case requires a probabilistic argument.
\end{remark}


Continuing with the discretisation argument, for fixed $N\geq 1$, we set
\[
\Omega_{j,N}:=\Omega_j\cap \{z\in\C:d(z)\geq \lambda^{-N}\}.
\]

\begin{lemma}\label{prop:uniform-on-Omegaj}
Let \(j\in \mathbb Z\) be such that \(\Omega_{j,N}\neq \emptyset\). There exists $C_N>0$ such that for any $K\geq 1$,
\begin{align}\label{eq:uniform-on-Omegaj}
\mathbf P\Bigl(
\sup_{z\in \Omega_{j,N}}\|\mathcal K_{\omega,\lambda}(z)\|
\leq
C_NK\,2^{-j}A_\lambda \log \lambda
\Bigr)
\geq
1-\exp(-K^2).
\end{align}
\end{lemma}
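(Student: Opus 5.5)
The proof is a net argument on $\Omega_{j,N}$. We combine the pointwise high-probability bound of Proposition~\ref{prop:BS_small_q}, which is \eqref{eq:P(E(z))}, with the Lipschitz control of Corollary~\ref{cor:tail_bound_K(z)-K(z')} and a union bound.

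\emph{Net.} We fix a scale $\eta:=\min(c2^{j},\lambda^{-M})$ with $M=M(N)$ to be chosen, and cover $\Omega_{j,N}\subset\{|z|\le\lambda^2\}$ by a finite set $\mathcal N_j\subset\Omega_{j,N}$ such that every $z\in\Omega_{j,N}$ lies within $\eta$ of some $z'\in\mathcal N_j$. A grid in the disc of radius $\lambda^2$ gives
\[
\#\mathcal N_j\lesssim \lambda^4\eta^{-2}.
\]
Since $z\in\Omega_{j,N}$ forces $B(z)\ge 2^j$ and $d(z)\ge\lambda^{-N}$, we also have $2^j\le B(z)\lesssim \lambda^{C}$. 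On the other side, $B(z)\gtrsim \lambda^{-N}(\log\lambda)^{-1}\lambda^{-C}$, because $\delta(z)\gtrsim\min(\lambda^{-N},|z|^{1/2})/\log\lambda$. Small $|z|$ needs a little care, since $\delta(z)$ contains $|z|^{1/2}$; one can either restrict to $|z|\ge \lambda^{-2N}$ or note that near $z=0$ the quantity $d(z)$ is comparable to $|z|$ anyway. Hence $2^{-j}\le\lambda^{C_N}$, and so $\log\#\mathcal N_j\lesssim_N\log\lambda$.

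\emph{Union bound over the net.} We apply \eqref{eq:P(E(z))} at each $z'\in\mathcal N_j$ with $K$ replaced by $K':=K+C\sqrt{\log\#\mathcal N_j}\lesssim_N K\sqrt{\log\lambda}$. A failure probability of $\#\mathcal N_j\,e^{-K'^2}\le \tfrac12 e^{-K^2}$ follows after adjusting constants; one uses $K\ge1$ and, if needed, Lemma~\ref{lem:tailbound}, which permits $e^{-cK^2}$ to be rescaled. On this event,
\[
\|\mathcal K_{\omega,\lambda}(z')\|\lesssim K\sqrt{\log\lambda}\,2^{-j}A_\lambda\quad\text{for all }z'\in\mathcal N_j,
\]
using $B(z')\ge 2^j$.

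\emph{Passing from net points to all of $\Omega_{j,N}$.} For $z\in\Omega_{j,N}$ we pick $z'\in\mathcal N_j$ with $|z-z'|\le\eta\le c2^j$. Corollary~\ref{cor:tail_bound_K(z)-K(z')}, applied with base point $z'\in\Omega_j$, gives an event $E_j'$ of probability $\ge1-\tfrac12e^{-K^2}$ on which
\[
\|\mathcal K(z)-\mathcal K(z')\|\le CK2^{-j}\sqrt{\log\lambda}\,\|v\|_{\ell^\infty}.
\]
This error is not automatically $\lesssim 2^{-j}A_\lambda\log\lambda$, because $\|v\|_{\ell^\infty}$ may be much larger than $\lambda^{-d(q+1)/(2q)}\|v\|_{\ell^{2q}}$. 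This is the main obstacle, and it is why $\eta$ is taken polynomially small rather than of size $c2^j$.

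To handle it, I would use the sharper Lipschitz bound from Lemma~\ref{lem:BS-perturb-z} directly. With $\rho\gtrsim\lambda^{-N}$ and $\|V\|_\infty\lesssim K\sqrt{\log\lambda}\|v\|_{\ell^\infty}$,
\[
\|\mathcal K(z)-\mathcal K(z')\|\lesssim \eta\lambda^{2N}K\sqrt{\log\lambda}\,\|v\|_{\ell^\infty}.
\]
Next we bound $\|v\|_{\ell^\infty}\le\|v\|_{\ell^{2q}}$ and $2^{-j}\gtrsim\lambda^{-C}$, which gives $A_\lambda 2^{-j}\gtrsim\lambda^{-C'}\|v\|_{\ell^{2q}}$. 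Choosing $M$ large enough depending on $N,d$ then makes the difference $\le K2^{-j}A_\lambda$. With $\eta=\lambda^{-M}$ the net still has $\log\#\mathcal N_j\lesssim_N\log\lambda$.

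Intersecting the two events and summing the two bounds gives $\sup_{\Omega_{j,N}}\|\mathcal K\|\le C_NK2^{-j}A_\lambda\sqrt{\log\lambda}$. This is stronger than the stated $\log\lambda$.
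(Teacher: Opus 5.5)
Your proposal is correct. It has the same skeleton as the paper's proof: a net in $\Omega_{j,N}$, the pointwise bound of Proposition~\ref{prop:BS_small_q} at the net points, a union bound, and Lipschitz continuity in $z$. It differs in one substantive way.

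\textbf{The paper's gap.} The paper takes a net at scale $c2^j$ and controls the perturbation with Corollary~\ref{cor:tail_bound_K(z)-K(z')}. It then asserts
\[
\sqrt{\log\lambda}\,\|v(\lambda)\|_{\ell^\infty}\le(\log\lambda)^{5/2}\|v(\lambda)\|_{\ell^{2q}}=A_\lambda .
\]
This drops the factor $\lambda^{-d(q+1)/(2q)}$ in the definition \eqref{def:Alambda} of $A_\lambda$. The inequality fails, for instance, when $v(\lambda)$ has a single nonzero entry. With $|z-z'|\sim 2^j$ and $\rho\sim 2^j$, Lemma~\ref{lem:BS-perturb-z} only gives an error of size $2^{-j}\|V_{\omega,\lambda}\|_{L^\infty}$. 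That is not $\lesssim 2^{-j}A_\lambda$ in general, so the paper's argument has a genuine gap at exactly the point you flagged.

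\textbf{Your remedy.} You take mesh $\eta\le\lambda^{-M}$ and apply Lemma~\ref{lem:BS-perturb-z} with $\rho=\lambda^{-N}$, together with Lemma~\ref{lem:Linfty_norm}. The perturbation is then $\eta\lambda^{2N}K\sqrt{\log\lambda}\,\|v\|_{\ell^\infty}$. This is beaten by
\[
2^{-j}A_\lambda\gtrsim\lambda^{-1-d}\|v\|_{\ell^{2q}},
\]
which follows from $2^j\le B(z)\le\delta(z)\lesssim\lambda$ and $d(q+1)/(2q)\le d$. So $M=M(N,d)$ suffices. The cost is only a polynomial increase in $\#\mathcal N_j$, which the union bound absorbs.

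\textbf{Size of the loss.} Your bookkeeping $K\mapsto K+C\sqrt{\log\#\mathcal N_j}$ also shows that the paper's replacement $K\mapsto C_NK\log\lambda$ is more than needed. Absorbing a factor $\lambda^{C}$ only requires $K^2\mapsto K^2+C\log\lambda$. This is why you get $\sqrt{\log\lambda}$ in place of $\log\lambda$.

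\textbf{Small $|z|$.} One minor point: this case needs no restriction, and "restricting to $|z|\ge\lambda^{-2N}$" would not be a legitimate move anyway. Since $0=\lambda_0^2\in\spec(-\Delta_g)$, one has $d(z)\le|z|$. Hence on $\Omega_{j,N}$ we get $|z|^{1/2}\ge\lambda^{-N/2}\ge\lambda^{-N}$, and therefore $\min\{d(z),|z|^{1/2}\}\ge\lambda^{-N}$ directly.
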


\begin{proof}
Fix \(j\in \mathbb Z\) with \(\Omega_{j,N}\neq \emptyset\), and let
\(\mathcal N_j\subset \Omega_{j,N}\) be a \(c2^j\)-net. We denote the
elements of \(\mathcal N_j\) by \(z_\alpha\). For each \(z_\alpha\in \mathcal N_j\), let
\[
E(z_\alpha)
:=
\Bigl\{
\omega:
\|\mathcal K_{\omega,\lambda}(z_\alpha)\|
\leq
CK\,2^{-j}A_\lambda
\Bigr\}.
\]
By \eqref{eq:EnormBS_j}, we have
\[
\mathbf P(E(z_\alpha))\geq 1-\exp(-K^2).
\]
Next, let \(E'_j\) denote the event on which the perturbation estimate Corollary \ref{cor:tail_bound_K(z)-K(z')} holds simultaneously for all pairs \(z,z_\alpha\) with
\(z\in \Omega_{j,N}\), \(z_\alpha\in\mathcal N_j\), and
\(|z-z_\alpha|\leq c2^j\), namely
\[
\|\mathcal K_{\omega,\lambda}(z)-\mathcal K_{\omega,\lambda}(z_\alpha)\|
\leq
CK\,2^{-j}\sqrt{\log\lambda}\,\|v(\lambda)\|_{\ell^\infty}.
\]
Since
\[
\sqrt{\log\lambda}\,\|v(\lambda)\|_{\ell^\infty}
\leq
(\log\lambda)^{5/2}\|v(\lambda)\|_{\ell^{2q}}
=
A_\lambda,
\]
it follows that on \(E'_j\),
\[
\|\mathcal K_{\omega,\lambda}(z)-\mathcal K_{\omega,\lambda}(z_\alpha)\|
\leq
CK\,2^{-j}A_\lambda
\]
for every \(z\in \Omega_{j,N}\) and every net point \(z_\alpha\) satisfying
\(|z-z_\alpha|\leq c2^j\).

Hence, on the event
\[
F_j:=\Bigl(\bigcap_{z_\alpha\in\mathcal N_j}E(z_\alpha)\Bigr)\cap E'_j,
\]
we have, for every \(z\in \Omega_{j,N}\), choosing \(z_\alpha\in \mathcal N_j\) with
\(|z-z_\alpha|\leq c2^j\),
\[
\|\mathcal K_{\omega,\lambda}(z)\|
\leq
\|\mathcal K_{\omega,\lambda}(z_\alpha)\|
+
\|\mathcal K_{\omega,\lambda}(z)-\mathcal K_{\omega,\lambda}(z_\alpha)\|
\leq
CK\,2^{-j}A_\lambda.
\]
Thus,
\[
F_j\subseteq
\Bigl\{
\omega:
\sup_{z\in \Omega_{j,N}}\|\mathcal K_{\omega,\lambda}(z)\|
\leq
CK\,2^{-j}A_\lambda
\Bigr\}.
\]

It remains to estimate \(\mathbf P(F_j)\). By the union bound,
\[
\mathbf P(F_j^c)
\leq
\sum_{z_\alpha\in \mathcal N_j}\mathbf P(E(z_\alpha)^c)
+\mathbf P((E_j')^c)
\leq
\#\mathcal N_j\exp(-K^2)+\mathbf P(E_j'^c).
\]
By a volume comparison argument, the cardinality of \(\mathcal N_j\) satisfies 
\[
\#\mathcal N_j\leq \frac{\lambda^4}{2^{2j}}
\]
and by Corollary \ref{cor:tail_bound_K(z)-K(z')}, $\mathbf P(E_j'^c)\leq \exp(-K^2)$.
Hence
\[
\mathbf P(F_j^c)
\leq
(1+C\,2^{-2j}\lambda^4) \exp(-K^2).
\]
Since \(\Omega_j\neq \emptyset\) implies 
\begin{align}\label{eq:2j_poly_bounded}
\lambda^{-N-1}\lesssim 2^j\lesssim \lambda^3  
\end{align}
the factor \(2^{-2j}\lambda^4\) is bounded by a fixed power of
\(\lambda\). Replacing \(K\) by \(C_NK\log\lambda\) in the preceding argument, we
obtain the claimed bound \eqref{eq:uniform-on-Omegaj}.
\end{proof}

\begin{proof}[Proof of Proposition \ref{prop:uniform-on-truncated-region}]
For each \(j\in \mathbb Z\), let
\[
F_j:=
\Bigl\{
\omega:
\sup_{z\in \Omega_{j,N}}\|\mathcal K_{\omega,\lambda}(z)\|
\leq
C_NK\,2^{-j}A_\lambda \log\lambda
\Bigr\}.
\]
By Lemma \ref{prop:uniform-on-Omegaj},
\[
\mathbf P(F_j)\geq 1-\exp(-K^2)
\]
for every \(j\) such that \(\Omega_{j,N}\neq \emptyset\).

We first estimate the number of such \(j\). If \(\Omega_{j,N}\neq \emptyset\), then
there exists \(z\in \Omega_{j,N}\) such that
\begin{align}\label{eq:d(z)_approx_2j}
2^j\leq B(z)<2^{j+1}.
\end{align}
In view of \eqref{eq:2j_poly_bounded}, we have
\[
-N\log_2\lambda-O(1)\leq j\leq 3\log_2\lambda+O(1),
\]
so the number of indices \(j\) for which \(\Omega_{j,N}\neq \emptyset\) is
\(O_N(\log\lambda)\).

Now define
\[
F:=\bigcap_{\Omega_{j,N}\neq \emptyset} F_j.
\]
By the union bound,
\[
\mathbf P(F^c)
\leq
\sum_{\Omega_{j,N}\neq \emptyset}\mathbf P(F_j^c)
\leq
C_N(\log\lambda)\exp(-K^2).
\]
Absorbing the factor \(\log\lambda\) by replacing \(K\) with \(C_NK\log\lambda\), we obtain
\[
\mathbf P(F)\geq 1-\exp(-K^2).
\]

It remains to identify the bound on \(F\). Let \(z\) satisfy \(|z|\leq \lambda^2\)
and \(d(z)\geq \lambda^{-N}\). Then \(z\in \Omega_{j,N}\) for some \(j\), and on
\(\Omega_{j,N}\) we have \eqref{eq:d(z)_approx_2j}.
Thus, on the event \(F\),
\[
\|\mathcal K_{\omega,\lambda}(z)\|
\leq
C_NK\,2^{-j}A_\lambda (\log\lambda)^2
\lesssim
C_NK\,B(z)^{-1}A_\lambda (\log\lambda)^2.
\]
Taking the supremum over all such \(z\)
after multiplying by \(B(z)\) yields
\eqref{eq:uniform-on-truncated-region}.
\end{proof}

\subsection{Proof of Theorem \ref{thm:main_intro}}
\begin{proof}
By the Birman--Schwinger principle, $z \in \mathbb{C}\setminus \spec(-\Delta_g)$ is an eigenvalue of $H_{\omega,\lambda}$ if and only if $-1$ is an eigenvalue of $\mathcal{K}_{\omega,\lambda}(z)$. In particular,
\begin{equation}\label{eq:BSnorm}
z \in \spec(H_{\omega,\lambda}) \implies \|\mathcal{K}_{\omega,\lambda}(z)\|\ge 1.
\end{equation}

By Proposition \ref{prop:uniform-on-truncated-region}, there exists an event of probability at least $1 - \exp(-K^2)$ on which the estimate
\begin{equation}\label{eq:BSbound}
\|\mathcal{K}_{\omega,\lambda}(z)\| \leq C_N K A_{\lambda}(\log\lambda)^2B(z)^{-1}
\end{equation}
holds uniformly for all $z\in\C\setminus\spec(-\Delta_g)$ with $|z|\le \lambda^2$ and $d(z)\geq \lambda^{-N}$. Combining \eqref{eq:BSnorm} and \eqref{eq:BSbound} and recalling the definitions of $A_{\lambda}$, $B(z)$, $\delta(z)$ in \eqref{def:Alambda}, \eqref{def:B(z)}, \eqref{def:delta(z)}, respectively, we obtain that for any eigenvalue $z \in \mathbb{C}\setminus \spec(-\Delta_g)$ of $H_{\omega,\lambda}$ with $|z|\le \lambda^2$ and $d(z)\geq \lambda^{-N}$,
\begin{align}\label{eq:keybound}
\min\{d(z), |z|^{1/2}\}
\leq C_N K \langle z\rangle^{\frac{d(q+1)}{4q}-\frac{\mu(q)}{2}}  \, \lambda^{-\frac{d(q+1)}{2q}} (\log \lambda)^{11/2} \|v(\lambda)\|_{\ell^{2q}}. 
\end{align}
Here we have estimated $\log\langle z\rangle \lesssim \log \lambda$.

We now distinguish two cases.

\medskip
\noindent
\textbf{Case 1:} $d(z) \le |z|^{1/2}$.

Let $\lambda_k^2 \in \spec(-\Delta_g)$ be such that $d(z) = |z - \lambda_k^2|$. Then 
\begin{align}
|z|\approx (1+\lambda_k)^2.
\end{align}
Inserting this into \eqref{eq:keybound}, we obtain
\begin{align}\label{eq:d(z)leq case 1}
d(z)
&\leq C_N 
K (1+\lambda_k)^{\frac{d(q+1)}{2q}-\mu(q)}  \, \lambda^{-\frac{d(q+1)}{2q}} (\log \lambda)^{11/2} \|v(\lambda)\|_{\ell^{2q}},
\end{align}
which means that
\[
z\in \bigcup_{\lambda_k\leq \lambda} D(\lambda_k^2,C_NKr_k(\lambda,q)).
\]

\medskip
\noindent
\textbf{Case 2:} $d(z) \ge  |z|^{1/2}$. 

In this case, \eqref{eq:keybound} yields 
\[
|z|^{1/2}\langle z\rangle^{-\frac{d(q+1)}{4q}+\frac{\mu(q)}{2}}  \leq C_NK\, \lambda^{-\frac{d(q+1)}{2q}} (\log \lambda)^{11/2} \|v(\lambda)\|_{\ell^{2q}},
\]
hence $z\in \Omega$.
This proves the claimed spectral inclusion.
\end{proof}

\appendix

\section{Local constancy on closed Riemannian manifolds}
\label{s:localconstancy}

In this appendix, we verify the local constancy property for closed Riemannian manifolds that we used in the proofs of Proposition~\ref{prop: manifold spectral cluster expectation} and Lemma~\ref{lem:asymmetric_Birman-Schwinger}.

\begin{lemma}\label{verificationlocalconstancyspectralcluster1}
 Let $P=\sqrt{-\Delta_g}$, $0<\epsilon<{\rm inj}(M)/10$, and $\chi\in\mathcal{S}(\R)$ be such that $\supp(\widehat{\chi})\subset [-\epsilon,\epsilon]$ and $\chi\geq \mathbf{1}_{[-1,1]}$. Then for all $x\in M$,
 \begin{align*}
  |\chi(P/\lambda)f(x)|\lesssim \lambda^d \int_M (1+\lambda d_g(x,y))^{-N} |f(y)|\rd y+\int_M|f(y)|\rd y.
 \end{align*}
\end{lemma}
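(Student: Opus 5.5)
We write the operator through the Fourier transform and finite propagation speed of the wave group. By Fourier inversion and evenness,
\[
\chi(P/\lambda)=\frac{1}{2\pi}\int_{\R}\lambda\widehat{\chi}(\lambda t)\,\e^{\I tP}\,\rd t .
\]
We may replace $\chi$ by its even part $\tfrac12(\chi(s)+\chi(-s))$, which keeps $\supp\widehat\chi\subset[-\epsilon,\epsilon]$ and only alters $\chi$ on the negative axis, where $P$ has no spectrum. Then
\[
\chi(P/\lambda)=\frac{1}{\pi}\int_0^{\infty}\lambda\widehat{\chi}(\lambda t)\cos(tP)\,\rd t .
\]
The relevant times satisfy $|t|\leq \epsilon/\lambda\leq \epsilon<{\rm inj}(M)/10$. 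By finite propagation speed, the kernel $K_\lambda(x,y)$ of $\chi(P/\lambda)$ is therefore supported in $\{d_g(x,y)\leq\epsilon\}$, well inside the injectivity radius. Both sides scale linearly in $|f|$, so it suffices to prove the pointwise kernel bound
\[
|K_\lambda(x,y)|\lesssim_N \lambda^d(1+\lambda d_g(x,y))^{-N}+1 .
\]
Integrating against $|f(y)|$ then gives the claim.

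To get the kernel bound, we use the Hadamard parametrix for $\cos(tP)$ at small times, as in Sogge's book. For $|t|<{\rm inj}(M)/2$, in normal coordinates centred at $x$, we have
\[
\cos(tP)(x,y)=\sum_{\nu=0}^{m}w_\nu(x,y)\,\partial_t E_\nu(t,\exp_x^{-1}y)+R_m(t,x,y).
\]
Here $E_\nu$ are the Euclidean Riesz-type distributions with $\widehat{E_\nu}(t,\xi)$ built from $\cos(t|\xi|)|\xi|^{-2\nu}$ (suitably regularised), the $w_\nu$ are smooth, and $R_m$ lies in $C^{k}$ for $m$ large. Inserting this into the $t$-integral, the $\nu=0$ term becomes
\[
w_0(x,y)\int_{\R^d}\chi(|\xi|/\lambda)\,\e^{\I\langle\exp_x^{-1}y,\xi\rangle}\,\rd\xi\,(2\pi)^{-d}.
\]
We then rescale $\xi=\lambda\eta$. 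Since $\eta\mapsto\chi(|\eta|)$ is a Schwartz function (because $\chi$ is even and Schwartz), the integral equals $\lambda^d\widehat{\chi(|\cdot|)}(\lambda\exp_x^{-1}y)$. This is $\lesssim_N\lambda^d(1+\lambda d_g(x,y))^{-N}$, since $|\exp_x^{-1}y|\approx d_g(x,y)$.

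The terms with $\nu\geq1$ gain factors $|\xi|^{-2\nu}$ and give the same bound with smaller powers of $\lambda$. The low-frequency region, where $|\xi|^{-2\nu}$ is singular, only produces $O(1)$ contributions. The remainder $R_m$ is bounded and continuous, and the weight $\lambda\widehat\chi(\lambda t)$ has $L^1$ norm $O(1)$, so it contributes $O(1)$. This explains the second term $\int_M|f|\,\rd y$.

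The main obstacle is handling the lower-order Hadamard terms and the remainder cleanly without tracking exact parametrix formulae. The first thing I would try is to avoid them entirely with a cruder argument. By Sobolev embedding and elliptic regularity on the compact manifold,
\[
|K_\lambda(x,y)|\lesssim\|(1+P)^{s}\chi(P/\lambda)(1+P)^{s}\|_{L^2\to L^2}\lesssim\lambda^{2s}, \qquad s>d/2 .
\]
This crude bound is only useful for the integrable tail, so the parametrix is still needed for the sharp $\lambda^d$ peak. Alternatively, one can quote the standard result (Sogge, \emph{Hangzhou Lectures}, Lemma 3.2.x) that for $|t|\le\epsilon$ the operator $\int\lambda\widehat\chi(\lambda t)\e^{\I tP}\rd t$ is, modulo an $O(1)$ smoothing kernel, a pseudodifferential operator of order $0$ whose symbol is $\chi(p(x,\xi)/\lambda)$ plus lower-order terms. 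Integrating by parts in $\xi$ directly then gives the decay $(1+\lambda d_g)^{-N}$ at scale $\lambda^d$, which together with the $O(1)$ smoothing term is exactly the stated estimate.
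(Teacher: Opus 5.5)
\textbf{The gap: the reduction to even $\chi$ is not valid.} For $s\geq 0$ the even part $\chi_{\mathrm{ev}}(s)=\frac12(\chi(s)+\chi(-s))$ differs from $\chi(s)$ by the odd part $\chi_{\mathrm{odd}}(s)=\frac12(\chi(s)-\chi(-s))$. Hence $\chi(P/\lambda)=\chi_{\mathrm{ev}}(P/\lambda)+\chi_{\mathrm{odd}}(P/\lambda)$, and the second operator is in general nonzero. The only even function agreeing with $\chi$ on $[0,\infty)$ is $\chi(|s|)$. It is not smooth at $0$ unless $\chi$ is even, so by Paley--Wiener it has no compactly supported Fourier transform. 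The odd part is synthesised from $\sin(tP)$, which has no finite propagation speed. Both facts you use afterwards are available only for even $\chi$: the localisation of $K_\lambda$ near the diagonal, and the Schwartz property of $\eta\mapsto\chi(|\eta|)$. What you have proved is the lemma for even $\chi$.

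\textbf{This cannot be repaired.} For non-even $\chi$ the estimate is false as soon as $N\geq 2d$, and the remark after the lemma uses $N=100d$. Let $\eta\geq 0$ be even and Schwartz with $\supp\widehat\eta\subset[-\epsilon,\epsilon]$ and $\eta\geq 4$ on $[-1,1]$. Put $\chi(s)=\eta(s)(1+cs)^2$ with $0<c\leq\frac12$. This $\chi$ satisfies all hypotheses, and $\chi(P/\lambda)=\eta(P/\lambda)+c^2\lambda^{-2}P^2\eta(P/\lambda)+2c\lambda^{-1}P\eta(P/\lambda)$.

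The first two kernels vanish for $d_g(x,y)>\epsilon/\lambda$, since their symbols are even and finite propagation speed applies. For the third, set $g_y:=\eta(P/\lambda)(\cdot,y)$. It is supported in $B(y,\epsilon/\lambda)$, has $\|g_y\|_{L^1}\lesssim 1$, and has $\int_M g_y=\eta(0)$ because $\eta(P/\lambda)1=\eta(0)$. Taylor expanding the kernel $K_P(x,\cdot)$ of $P$ about $y$ gives, for $\lambda^{-1}\ll\rho:=d_g(x,y)\ll 1$, $(Pg_y)(x)=\eta(0)K_P(x,y)+O(\lambda^{-1}\rho^{-d-2})$. Here $K_P(x,y)=c_d\,\rho^{-d-1}(1+o(1))$ with $c_d\neq 0$.

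Hence the kernel of $\chi(P/\lambda)$ has size $\approx\lambda^{-1}\rho^{-d-1}$, which is $\approx\lambda^{(d-1)/2}$ at $\rho=\lambda^{-1/2}$. Testing the lemma on approximate identities at $y$, the right-hand side is only $\lambda^d(1+\lambda^{1/2})^{-N}+1\leq 2$. So evenness of $\chi$ must be added as a hypothesis, which is harmless for the application. The paper's Lax-parametrix proof glosses over the same point. Its integration by parts in $\xi$ is not justified near $\xi=0$, where $p(y,\xi)=|\xi|_g$ fails to be smooth; after the $t$-integration the amplitude is essentially $\chi(p(y,\xi))$, which is smooth at $\xi=0$ only for even $\chi$.

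\textbf{With $\chi$ even, your route works and needs much less than you use.} Since $\lambda\widehat\chi(\lambda t)$ vanishes for $|t|>\epsilon/\lambda$, finite propagation speed puts $K_\lambda$ in $\{d_g(x,y)\leq\epsilon/\lambda\}$, not merely in $\{d_g\leq\epsilon\}$. So it suffices to show $|K_\lambda|\lesssim\lambda^d$. This follows from Cauchy--Schwarz in the eigenfunction expansion, with $-\Delta_g e_j=\lambda_j^2 e_j$, together with
\[
\sup_x\sum_{\lambda_j\in[k,k+1]}|e_j(x)|^2=\|\Pi_k\|_{L^2\to L^\infty}^2\lesssim(1+k)^{d-1},
\]
the $p'=\infty$ case of Sogge's bound. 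This gives the estimate without the $\int_M|f|$ term, and the Hadamard parametrix, with its hand-waved lower-order terms, becomes unnecessary. By contrast, the paper's $e^{itP}$ parametrix produces a smooth nonlocal remainder, which is where that extra term comes from.
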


\begin{remark}
  This shows that $\Ran \chi(P/\lambda)$ has the local constancy property for any scale $h\leq \lambda^{-1}$.
\end{remark}

\begin{proof}
  We use the Fourier inversion formula to write
  \begin{align*}
    \chi(P/\lambda)=\frac{\lambda}{2\pi}\int_{-\infty}^{\infty}\widehat{\chi}(\lambda t)\e^{\I tP}\rd t.
  \end{align*}
  By \cite[Theorem~4.1.2]{Sogge2017}, for $|t|<\epsilon$,
  \begin{align}\label{eq: Q(t)+R(t)}
    \e^{\I tP}=Q(t)+R(t),
  \end{align}
  where $R(t)$ has a smooth kernel $R(t,x,y)$, the kernel $Q(t,x,y)$ of $Q(t)$ is supported in a small neighborhood of the diagonal in $M\times M$, and in local coordinates takes the form
  \[
    Q(t,x,y) = (2\pi)^{-d}\int_{\R^d}\e^{\I[\phi(x,y,\xi)+tp(y,\xi)]}q(t,x,y,\xi)\rd \xi.
  \]
  Here, $p(x,\xi)=|\xi|_{g(x)}$ is the principal symbol of $P$, $q\in S^0$ is a symbol of order zero, and $\phi$ is homogeneous of degree one in $\xi$ and satisfies
  \begin{align}\label{Taylor expansion of phase function}
    \phi(x,y,\xi)=(x-y)\cdot\xi+\mathcal{O}(|x-y|^2|\xi|).
  \end{align}
  Thus, for $f$ supported in a relatively compact subset of a given coordinate patch,
  \begin{align*}
    \chi(P/\lambda)f(x)&=\frac{\lambda}{(2\pi)^{d+1}}\int_{\R^d}\int_{\R^d}\int_{-\infty}^{\infty}\widehat{\chi}(\lambda t)\e^{\I[\phi(x,y,\xi)+tp(y,\xi)]}q(t,x,y,\xi)f(y)\rd t\rd \xi \rd y \\
    & + \frac{\lambda}{2\pi}\int_{\R^d}\int_{-\infty}^{\infty}\widehat{\chi}(\lambda t)R(t,x,y)f(y)\rd t \rd y.
  \end{align*}
  The modulus of the second term is bounded by a multiple of $\|f\|_{L^1(M)}$. 
  By the change of variables $(t,\xi)\mapsto(\lambda^{-1}t,\lambda\xi)$, the kernel of the operator appearing in the first term is given by
  \begin{align*}
    K_{\lambda}(x,y)=\frac{\lambda^d}{(2\pi)^{d+1}}\int_{\R^d}\int_{-\infty}^{\infty}\widehat{\chi}(t)\e^{\I\lambda[\phi(x,y,\xi)+t p(y,\xi)/\lambda]}q(t/\lambda,x,y,\lambda\xi)\rd t\rd \xi. 
  \end{align*}
  Since $q\in S^0$, all $t$ and $\xi$ derivatives of $q(t/\lambda,x,y,\lambda\xi)$ are uniformly bounded in $\lambda$. By~\eqref{Taylor expansion of phase function},
  \begin{align*}
    |\partial_{\xi}[\phi(x,y,\xi)+t p(y,\xi)/\lambda]|\gtrsim |x-y|-\mathcal{O}(\lambda^{-1})
  \end{align*}
  on the support of the integrand.
  Therefore, if $|x-y|\geq C\lambda^{-1}$ with $C$ sufficiently large, nonstationary phase yields
  \begin{align*}
    |K_{\lambda}(x,y)|\lesssim_N 
    \lambda^d (1+\lambda|x-y|)^{-N}.
  \end{align*}
  The claim follows.
\end{proof}

Consider a cover $\{B_j\}$ of $M$ by geodesic balls of radius $\lambda^{-1}$, and let $\{\psi_j\}$ be a partition of unity subordinate to this cover. 
We define the weight functions
\begin{align}\label{def. w_j}
  w_{j}(x)=\lambda^d(1+\lambda\dist(x,B_j))^{-100d},\quad x\in M.
\end{align}

\begin{lemma}\label{lemma locally constant}
  Let $f\in L^2(M)$, and assume that $f$ is spectrally localised to frequencies at most $\lambda$, with respect to $P$. Then, for every $j$,
  \begin{align*}
    \|f\|_{L^{\infty}(B_j)}\lesssim \|f\|_{L^1(w_j)}+\|f\|_{L^1(M)}.
  \end{align*}
\end{lemma}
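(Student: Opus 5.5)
The plan is to reproduce $f$ from itself via a smooth spectral cutoff and then feed it into Lemma~\ref{verificationlocalconstancyspectralcluster1}. Fix $0<\epsilon<\mathrm{inj}(M)/10$ and $\chi\in\mathcal{S}(\R)$ with $\supp\widehat{\chi}\subset[-\epsilon,\epsilon]$ and $\chi\geq\mathbf{1}_{[-1,1]}$. Such a $\chi$ exists: take $\chi=c\,|\check\eta|^2$ for a real even $\eta\in C_c^\infty([-\epsilon/2,\epsilon/2])$ with $\check\eta(0)>0$, and choose $c$ large enough that $\chi\geq1$ on $[-1,1]$ (possible since $\check\eta$ is continuous and nonvanishing near $0$, after rescaling $\eta$).

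The difficulty is that $\chi$ need not equal $1$ on $[0,1]$, so $\chi(P/\lambda)f\neq f$ in general. To fix this I will use the functional calculus. Since $f$ is spectrally localised to $P\leq\lambda$, and $\chi(\tau/\lambda)\geq1$ there, we can set $g:=\chi(P/\lambda)^{-1}\mathbf{1}(P\leq\lambda)f$, which is well defined and satisfies $f=\chi(P/\lambda)g$. The issue is now to control $\|g\|_{L^1(w_j)}$ by $f$, which is not obvious.

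A cleaner route avoids inverting $\chi$. Take $\chi_1\in\mathcal{S}(\R)$ with $\widehat{\chi_1}$ compactly supported in $[-\epsilon,\epsilon]$ and $\chi_1=1$ on $[-1,1]$. This is impossible exactly, since compactly supported Fourier transform forces analyticity. So instead I accept a Schwartz tail: choose $\chi_1\in C_c^\infty$ with $\chi_1=1$ on $[-1,1]$, and split $\widehat{\chi_1}=\widehat{\chi_1}\beta+\widehat{\chi_1}(1-\beta)$ with $\beta$ a cutoff to $[-\epsilon,\epsilon]$. The first piece is handled exactly as in the proof of Lemma~\ref{verificationlocalconstancyspectralcluster1}: Hadamard parametrix plus nonstationary phase gives a kernel bounded by $\lambda^d(1+\lambda d_g(x,y))^{-N}+O(1)$. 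That proof never actually used $\chi\geq\mathbf{1}_{[-1,1]}$, so it applies verbatim to any Schwartz function with $\widehat{\cdot}$ supported in $[-\epsilon,\epsilon]$. The second piece has Fourier transform vanishing near $0$ and rapidly decaying. Its contribution to the kernel is $\frac{\lambda}{2\pi}\int\widehat{\chi_1}(\lambda t)(1-\beta(\lambda t))e^{itP}\,\rd t$, which is $O(\lambda^{-\infty})$-small in high Sobolev norms up to the spectral-cluster growth. Using $\sum_k$ of Sogge/Weyl bounds on $\|\Pi_k\|_{L^1\to L^\infty}\lesssim k^{d-1}$, I expect to bound its kernel by $C\lambda^{-M}$ times polynomial factors, i.e.\ by $O(1)$ uniformly.

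With $f=\chi_1(P/\lambda)f$, this gives $|f(x)|\lesssim\lambda^d\int(1+\lambda d_g(x,y))^{-N}|f(y)|\,\rd y+\|f\|_{L^1}$. For $x\in B_j$, the triangle inequality gives $1+\lambda d_g(x,y)\gtrsim 1+\lambda\dist(y,B_j)$, since $B_j$ has radius $\lambda^{-1}$. Choosing $N=100d$ bounds the first term by $\|f\|_{L^1(w_j)}$, and taking the supremum over $x\in B_j$ concludes.

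The main obstacle is the tail estimate for the non-compactly-supported part of $\widehat{\chi_1}$, i.e.\ long-time wave propagators. I would handle it crudely through the spectral decomposition rather than through any propagation argument.
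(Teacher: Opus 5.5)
You are right that the paper's one-line proof (\emph{an immediate consequence of Lemma A.1}) passes over a real point. A $\chi$ with $\supp\widehat{\chi}\subset[-\epsilon,\epsilon]$ cannot equal $1$ on $[0,1]$, so Lemma~\ref{verificationlocalconstancyspectralcluster1} controls $\chi(P/\lambda)f$ rather than $f$. Your plan is the right kind of repair: reproduce $f=\chi_1(P/\lambda)f$ with $\chi_1\in C_c^\infty$, $\chi_1=1$ on $[-1,1]$, and split the wave representation into a parametrix part and a negligible tail.

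As written, however, the split is at the wrong scale, and the tail estimate is false. You cut $\widehat{\chi_1}$ in its own variable $s=\lambda t$ at $|s|\approx\epsilon$, i.e.\ at times $|t|\approx\epsilon/\lambda$. The tail is then $\tilde\chi(P/\lambda)$ with $\tilde\chi:=\chi_1-\mathcal{F}^{-1}(\widehat{\chi_1}\beta)$ a fixed, $\lambda$-independent Schwartz function. The rapid decay of $\widehat{\chi_1}$ produces no power of $\lambda$ at a $\lambda$-independent cut, and the times $\epsilon/\lambda\lesssim|t|\leq\epsilon$ are still short times, not long-time propagation. Moreover, $\mathcal{F}^{-1}(\widehat{\chi_1}\beta)$ extends to an entire function and decays, so it cannot be identically $1$ on $[0,1]$. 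Hence $\tilde\chi\not\equiv 0$ there, and Weyl's law gives the Hilbert--Schmidt bound $\|\tilde\chi(P/\lambda)\|_{\mathrm{HS}}^2=\sum_k|\tilde\chi(\lambda_k/\lambda)|^2\gtrsim\lambda^d$. So the kernel of the tail is not even uniformly bounded, let alone $O(\lambda^{-\infty})$. In short, the two claims \emph{Lemma A.1 applies verbatim to the first piece} and \emph{the second piece is negligible} cannot both hold for one split. This is exactly the obstruction you identified at the start, moved into the remainder.

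The fix is to cut in unscaled time: $\widehat{\chi_1}(\lambda t)=\widehat{\chi_1}(\lambda t)\beta(t)+\widehat{\chi_1}(\lambda t)(1-\beta(t))$. The tail is then $m_\lambda(P/\lambda)$ with $m_\lambda(\tau)=\frac{1}{2\pi}\int \e^{\I s\tau}\widehat{\chi_1}(s)(1-\beta(s/\lambda))\,\rd s$, whose integrand lives on $|s|\gtrsim\lambda\epsilon$. Integrating by parts gives $|m_\lambda(\tau)|\lesssim_{L,K}\lambda^{-L}(1+|\tau|)^{-K}$, and your bound $\|\Pi_k\|_{L^1\to L^\infty}\lesssim(1+k)^{d-1}$ summed over $k$ now does give an $O(1)$ kernel.

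The price is paid by the short-time piece. In the rescaled variable its Fourier support is $[-\lambda\epsilon,\lambda\epsilon]$, so Lemma~\ref{verificationlocalconstancyspectralcluster1} no longer applies verbatim. In its proof the lower bound $|\partial_\xi[\phi+sp/\lambda]|\gtrsim|x-y|-O(\lambda^{-1})$ fails once $|s|\gtrsim\lambda|x-y|$. That region has to be handled by $|\widehat{\chi_1}(s)|\lesssim_N(1+|s|)^{-N}\lesssim(1+\lambda|x-y|)^{-N}$, using that all derivatives of $\widehat{\chi_1}(s)\beta(s/\lambda)$ obey Schwartz bounds uniformly in $\lambda\geq1$. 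Alternatively, quote the standard bound $\lesssim_N\lambda^d(1+\lambda d_g(x,y))^{-N}$ for the kernel of $\chi_1(P/\lambda)$, $\chi_1\in\mathcal{S}(\R)$. With this correction your final step goes through: take $N=100d$ and use $\dist(y,B_j)\leq d_g(x,y)$ for $x\in B_j$.
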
 

\begin{proof}
This is an immediate consequence of Lemma~\ref{verificationlocalconstancyspectralcluster1}.
\end{proof}

\begin{lemma}\label{verificationlocalconstancyspectralcluster2}
  Let $f\in L^2(M)$, and assume that $f$ is spectrally localised to frequencies at most $\lambda\geq1$, with respect to $P$. Let $\Lambda\subset M$ be a set of $\lambda^{-1}$-separated points. Then for any $p\in [1,\infty]$,
  \begin{align*}
    \|f\|_{\ell^p(\Lambda)}\lesssim \lambda^{d/p}\|f\|_{L^p(M)}.
  \end{align*}
\end{lemma}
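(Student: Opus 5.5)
The plan is to reduce to the pointwise local constancy estimate of Lemma~\ref{verificationlocalconstancyspectralcluster1} (equivalently Lemma~\ref{lemma locally constant}) and then run the summation argument from the proof of Proposition~\ref{prop. abstract comparison ellp Lp}. Fix $\chi$ as in Lemma~\ref{verificationlocalconstancyspectralcluster1} with $\chi\geq \mathbf 1_{[-1,1]}$. Since $f$ is spectrally localised to $P\leq \lambda$, we would like $f=\chi(P/\lambda)f$. This is not literally true, because $\chi$ is not identically $1$ on $[0,1]$ in general. We therefore take instead $\tilde f:=\mathbf 1(P\leq\lambda)f$. We write $f=\chi(P/\lambda)g$ with $g:=\chi(P/\lambda)^{-1}f$, which is well defined on the range of $\mathbf 1(P\le\lambda)$ since $\chi\geq 1$ there. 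Alternatively, and more simply, we use a $\chi$ equal to $1$ on $[-1,1]$ after rescaling $\lambda$ by a constant. The cleanest route is to take $\chi$ with $\widehat\chi$ compactly supported and $\chi(0)$ normalised, and to apply the lemma to $\chi(P/(C\lambda))$. With $C$ large, $\chi(\cdot/C)$ is bounded below on $[0,1]$, and the operator $\chi(P/C\lambda)^{-1}\mathbf 1(P\le\lambda)$ is bounded on every $L^p$ with the same kernel bounds. I expect this identification step to be the main technical nuisance. If it proves delicate, I would simply replace the hypothesis by $f\in\Ran\chi(P/\lambda)$, which is how it is used in Lemma~\ref{lem:asymmetric_Birman-Schwinger}.

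Granting the pointwise bound
\[
|f(x)|\lesssim \lambda^d\int_M(1+\lambda d_g(x,y))^{-N}|f(y)|\,\rd y+\int_M|f|\,\rd y,
\]
we then follow the computation in Proposition~\ref{prop. abstract comparison ellp Lp}. First, Hölder with respect to the probability-like weight $\lambda^d(1+\lambda d_g(x,\cdot))^{-N}$ gives
\[
|f(x)|^p\lesssim \lambda^d\int(1+\lambda d_g(x,y))^{-N}|f(y)|^p\,\rd y+\|f\|_{L^p}^p,
\]
using that this weight has integral $O(1)$ by Ahlfors regularity of $M$ and that $M$ has finite volume. Next we sum over $x\in\Lambda$. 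For the first term we use
\[
\sum_{x\in\Lambda}\lambda^d(1+\lambda d_g(x,y))^{-N}\lesssim \lambda^d,
\]
which follows from the overlap bound \eqref{eq: abstract overlap bound} applied dyadically in $\lambda d_g(x,y)$. For the second term we use $\#\Lambda\lesssim\lambda^d$, again by disjointness of the balls $B(x,\lambda^{-1}/2)$ and compactness. Combining the two gives $\|f\|_{\ell^p(\Lambda)}^p\lesssim\lambda^d\|f\|_{L^p}^p$. The case $p=\infty$ is immediate from the pointwise bound and Hölder.

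Finally, note that the tail term $\|f\|_{L^1}$ is harmless precisely because $M$ is compact, so that $\|f\|_{L^1}\lesssim\|f\|_{L^p}$ and $\#\Lambda\lesssim \lambda^d$. This is the only place where the global (non-local) term of Lemma~\ref{verificationlocalconstancyspectralcluster1} enters.
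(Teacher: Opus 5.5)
Your argument is the paper's proof: invoke the pointwise bound of Lemma~\ref{lemma locally constant}, sum over $\Lambda$ exactly as in Proposition~\ref{prop. abstract comparison ellp Lp} (H\"older against the weight, then the overlap bound), and dispose of the global term using $\#\Lambda\lesssim\lambda^d$ and $\|f\|_{L^1(M)}\lesssim\|f\|_{L^p(M)}$. Your worry that $f\neq\chi(P/\lambda)f$ is justified; it concerns the paper's passage from Lemma~\ref{verificationlocalconstancyspectralcluster1} to Lemma~\ref{lemma locally constant} rather than the present statement, but your proposed fix via $\chi(P/C\lambda)^{-1}\mathbf{1}(P\le\lambda)$ is wrong as stated, since a sharp spectral cutoff is not uniformly bounded on $L^p$ for $p\neq 2$, and the clean repair is to write $f=\psi(P/\lambda)f$ with $\psi\in C_c^\infty(\R)$, $\psi=1$ on $[0,1]$, and use the standard kernel bound $|\psi(P/\lambda)(x,y)|\lesssim_N\lambda^d(1+\lambda d_g(x,y))^{-N}$.
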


\begin{proof}
  Using Lemma~\ref{lemma locally constant}, the estimate for the first term follows as in the proof of Proposition~\ref{prop. abstract comparison ellp Lp}. The estimate for the second term is trivial since $\#\Lambda\lesssim\lambda^d$ and $\|f\|_{L^1(M)}\lesssim \|f\|_{L^p(M)}$ by H\"older's inequality and $\lambda\geq1$.
\end{proof}


\def\cprime{$'$}

\end{document}